\documentclass[11pt,a4paper]{amsart}
\usepackage{amssymb,amsmath}
\usepackage[alphabetic]{amsrefs}
\usepackage{amsmath,amssymb}
\usepackage{mathrsfs}
\usepackage[dvipdfmx]{graphicx,color}
\usepackage{wrapfig}
\usepackage{calc}
\usepackage{extarrows}
\usepackage{amsthm}
\usepackage{latexsym}
\usepackage{slashed}
\usepackage{framed}
\usepackage{ascmac}
\usepackage[all]{xy}
\usepackage{hyperref}
\usepackage[alphabetic]{amsrefs}
\usepackage[mathscr]{eucal}


\makeatletter
\@addtoreset{equation}{section}
\makeatother

\DeclareMathAlphabet{\mathfrak}{U}{euf}{m}{n}
\SetMathAlphabet{\mathfrak}{bold}{U}{euf}{b}{n}


\theoremstyle{definition}
\newtheorem{defn}[equation]{Definition}
\theoremstyle{plain}
\newtheorem{thm}[equation]{Theorem}
\newtheorem{prp}[equation]{Proposition}
\newtheorem{lem}[equation]{Lemma}
\newtheorem{cor}[equation]{Corollary}

\theoremstyle{remark}
\newtheorem{remk}[equation]{Remark}
\newtheorem{exmp}[equation]{Example}
\newtheorem*{remk*}{Remark}

\newcommand{\ma}[1]{\begin{align*} #1 \end{align*}}
\newcommand{\maa}[1]{\begin{align} #1 \end{align}}


\newcommand{\real}{\mathbb R}
\newcommand{\comp}{\mathbb C}
\newcommand{\zahl}{\mathbb Z}
\newcommand{\quot}{\mathbb Q}
\newcommand{\nat}{\mathbb N}


\DeclareMathOperator*{\Hom}{Hom}

\newcommand{\id}{\mathrm{id}}

\DeclareMathOperator*{\Ker}{Ker}

\newcommand{\bk} [1]{\left(  #1 \right)}
\newcommand{\ebk}[1]{\left<  #1 \right>}

\newcommand{\ssbk}[1]{\left\| #1 \right\|}

\newcommand{\pmx}[1]{\begin{pmatrix} #1 \end{pmatrix}}

\newcommand{\Cliff}{\mathbb{C}\ell }


\newcommand{\xra}{\xrightarrow}

\DefineSimpleKey{bib}{howpublished}

\renewcommand{\PrintDOI}[1]{%
  \href{http://dx.doi.org/#1}{{\tt DOI:#1}}%
  \IfEmptyBibField{volume}{, (to appear in print)}{}%
}

\renewcommand{\eprint}[1]{#1}

\BibSpec{book}{%
    +{}  {\PrintPrimary}                {transition}
    +{.} { \textit}                     {title}
    +{.} { }                            {part}
    +{:} { \textit}                     {subtitle}
    +{,} { \PrintEdition}               {edition}
    +{}  { \PrintEditorsB}              {editor}
    +{,} { \PrintTranslatorsC}          {translator}
    +{,} { \PrintContributions}         {contribution}
    +{,} { }                            {series}
    +{,} { \voltext}                    {volume}
    +{,} { }                            {publisher}
    +{,} { }                            {organization}
    +{,} { }                            {address}
    +{,} { }                            {status}
    +{,} { \PrintDOI}                   {doi}
    +{,} { \PrintISBNs}                 {isbn}
    +{}  { \parenthesize}               {language}
    +{}  { \PrintTranslation}           {translation}
    +{;} { \PrintReprint}               {reprint}
    +{,} { \PrintDate}                  {date}
    +{.} { }                            {note}
    +{.} {}                             {transition}
}

\BibSpec{article}{%
    +{}  {\PrintAuthors}                {author}
    +{,} { \textit}                     {title}
    +{.} { }                            {part}
    +{:} { \textit}                     {subtitle}
    +{,} { \PrintContributions}         {contribution}
    +{.} { \PrintPartials}              {partial}
    +{,} { }                            {journal}
    +{}  { \textbf}                     {volume}
    +{}  { \PrintDatePV}                {date}
    +{,} { \issuetext}                  {number}
    +{,} { \eprintpages}                {pages}
    +{,} { }                            {status}
    +{,} { \PrintDOI}                   {doi}
    +{}  { \parenthesize}               {language}
    +{}  { \PrintTranslation}           {translation}
    +{;} { \PrintReprint}               {reprint}
    +{.} { }                            {note}
    +{,} { \eprint}                     {eprint}
    +{.} {}                             {transition}
}
\BibSpec{collection.article}{%
    +{}  {\PrintAuthors}                {author}
    +{,} { \textit}                     {title}
    +{.} { }                            {part}
    +{:} { \textit}                     {subtitle}
    +{,} { \PrintContributions}         {contribution}
    +{,} { \PrintConference}            {conference}
    +{}  {\PrintBook}                   {book}
    +{,} { }                            {booktitle}
    +{,} { \PrintDateB}                 {date}
    +{,} { pp.~}                        {pages}
    +{,} { }                            {publisher}
    +{,} { }                            {organization}
    +{,} { }                            {address}
    +{,} { }                            {status}
    +{,} { \PrintDOI}                   {doi}
    +{,} { \eprint}        {eprint}
    +{}  { \parenthesize}               {language}
    +{}  { \PrintTranslation}           {translation}
    +{;} { \PrintReprint}               {reprint}
    +{.} { }                            {note}
    +{.} {}                             {transition}
}

\BibSpec{misc}{
  +{}{\PrintAuthors}  {author}
  +{,}{ \textit}      {title}
  +{,}{ }             {note}
  +{,}{ }             {date}
  +{.}{}              {transition}
}


\usepackage{accents}
\usepackage{makecell}
\usepackage{hhline}
\setlength{\doublerulesep}{2\arrayrulewidth}
\usepackage[mathscr]{eucal}
\newcommand\mapsfrom{\mathrel{\reflectbox{\ensuremath{\mapsto}}}}

\DeclareMathOperator{\gBr}{\hspace{-0.4ex}\widehat{Br}}

\DeclareMathOperator{\gTw}{\widehat{Tw}}
\newcommand{\lu}[1]{{}^{#1} \hspace{-0.1ex} }
\renewcommand{\blank}{\text{\textvisiblespace}}
\newcommand{\bdot}[1]{\accentset{\mbox{\large\bfseries .}}{#1}}

\DeclareMathOperator{\hotimes}{\hat{\otimes}}
\newcommand{\F}{\mathscr{F}}

\newcommand{\K}{\mathrm{K}}
\newcommand{\bK}{\mathbf{K}}
\newcommand{\KR}{\mathrm{KR}}
\newcommand{\KF}{\mathrm{KF}}

\newcommand{\KK}{\mathrm{KK}}
\newcommand{\Salg}{\mathcal{S}}
\newcommand{\KKR}{\mathrm{KKR}}
\newcommand{\KKQ}{\mathrm{KKQ}}
\newcommand{\G}{\mathcal{G}}
\newcommand{\T}{\mathbb{T}}

\newcommand{\Bop}{\mathbb{B}}
\newcommand{\Lop}{\mathbb{L}}
\newcommand{\Kop}{\mathbb{K}}
\newcommand{\Mop}{\mathbb{M}}
\newcommand{\Cst}{\mathrm{C}^*}

\newcommand{\Hilb}{\mathscr{H}}
\newcommand{\U}{\mathcal{U}}

\newcommand{\ev}{\mathrm{ev}}
\newcommand{\vD}{\mathrm{vD}}
\newcommand{\Kar}{\mathrm{Kar}}
\newcommand{\Zt}{\mathbb{Z}_2}
\newcommand{\Cl}{\mathrm{C}\ell}

\let\Im\relax
\DeclareMathOperator{\Im}{\mathrm{Im}}
\DeclareMathOperator{\pr}{pr}
\DeclareMathOperator{\qAut}{Aut_{qtm}}
\DeclareMathOperator{\Ad}{Ad}

\newcommand{\sA}{\mathscr{A}}
\newcommand{\sC}{\mathscr{C}}
\newcommand{\sG}{\mathscr{G}}
\newcommand{\sT}{\mathscr{T}}
\newcommand{\sP}{\mathscr{P}}
\title[Notes on twisted equivariant $\K$-theory for $\Cst$-algebras]{Notes on twisted equivariant $\K$-theory for $\Cst$-algebras}
\author[Y. Kubota]{Yosuke Kubota}
\address{Graduate School of Mathematical Science, The University of Tokyo, 3-8-1 Komaba, Meguro-ku, Tokyo 153-8914, Japan}
\email{ykubota@ms.u-tokyo.ac.jp}
\allowdisplaybreaks[2]
\date{9 February, 2016}
\subjclass[2010]{Primary 19L50; Secondary 19K35, 19L47, 81R60.}
\keywords{Twisted $\K$-theory, $\KK$-theory, groupoids, topological insulators.}

\setcounter{tocdepth}{1}
\begin{document}
\maketitle
\begin{abstract}
In this paper, we study a generalization of twisted (groupoid) equivariant $\K$-theory in the sense of Freed-Moore for $\Zt$-graded $\Cst$-algebras. It is defined by using Fredholm operators on Hilbert modules with twisted representations. We compare it with another description using odd symmetries, which is a generalization of van Daele's $\K$-theory for $\Zt$-graded Banach algebras. In particular, we obtain a simple presentation of the twisted equivariant $\K$-group when the $\Cst$-algebra is trivially graded. It is applied for the bulk-edge correspondence of topological insulators with CT-type symmetries.
\end{abstract}
\tableofcontents
\section{Introduction}

Recently, there has been an increasing interests in relations between $\K$-theory and the theory of topological insulators, an area of solid state physics. According to Kitaev's periodic table \cite{K2009}, free fermion phases are topologically classified in $10$ types, each of which corresponds to one of $2$ complex $\K$-groups and $8$ real $\K$-groups. This classification is formulated by Freed-Moore~\cite{MR3119923} in terms of twisted $\K$-theory. A key idea is Wigner's theorem~\cite{MR0106711} (see also Section 1 of \cite{MR3119923}), which asserts that a quantum symmetry is given by a linear/antilinear and even/odd projective representation of a groupoid. Freed and Moore introduce a generalized version of twisted equivariant $\K$-theory of groupoids classifying these representations. 

Here, twisted $\K$-theory \citelist{\cite{MR0282363}\cite{MR1018964}\cite{MR2172633}\cite{MR2119241}\cite{MR2513335}\cite{MR2860342}\cite{MR3119923}} is a kind of cohomology theory determined by spaces (or groupoids in general) and twists on it, that is, triplets $(\phi,c,\tau)$ where $\phi$ and $c$ are homomorphisms from $\G$ to $\Zt$ and $\tau$ is a $\phi$-twisted central extension of $\G$ by $\mathbb{T}$. Roughly speaking, twisted $\K$-theory associated to these data classifies $\Zt$-graded $\tau$-projective representations of $\G$ whose $\Zt$-grading and linearity are determined by $c$ and $\phi$. For a fixed twist $(\phi,c,\tau)$ of $\G$, the twisted $\K$-group of the action groupoid $\G \ltimes X$ with respect to the pull-back of $(\phi,c,\tau)$ is a topological invariant of a $\G$-space $X$. It enables us to regard twisted $\K$-theory as a functor from the category of locally compact Hausdorff $\G$-spaces to the category of $R(\G)$-modules.

In this paper, we investigate a canonical generalization of groupoid equivariant $\K$-theory of $\Cst$-algebras for twisted equivariant setting in the sense of \cite{MR3119923}. After reviewing a classification of twists in this sense by the \v{C}ech cohomology groups in Section \ref{section:2}, we start with the generalization of Kasparov's $\KK$-theory~\cite{MR582160} in Section \ref{section:3}. A general framework for twisted equivariant $\KK$-theory is introduced by Chabert-Echterhoff~\cite{MR1857079} as a bifunctor from the category of twisted $G$-$\Cst$-algebras to the category of $R(G)$-modules. In contrast, we define the $\lu \phi \KK ^\G _{c,\tau}$-group as a bifunctor from the category of ($\phi$-twisted) $\G$-$\Cst$-algebras. The group $\lu \phi \KK ^\G _{c,\tau}(A,B)$ is evidently suitable as a generalization of the $\KK$-group because it is actually isomorphic to a certain $\KKR$-group in the sense of Moutuou~\cite{MR3177819} (Proposition \ref{prp:detwist}).

In Section \ref{section:4}, we define the twisted equivariant $\K$-group as the twisted equivariant $\KK$-group $\lu \phi \KK ^\G _{c,\tau}(\real ,A)$. For example, when $A$ is the continuous function algebra of a compact Hausdorff $\G$-space $X$, it is isomorphic to the set of homotopy classes of twisted $\G$-equivariant families of Fredholm operators on $X$, which is a generalization of Atiyah's formulation of $\K$-theory using Fredholm operators \citelist{\cite{MR1043170}\cite{MR2172633}}. Moreover, we obtain a generalization of the Green-Julg theorem (Theorem \ref{thm:GJ}).  

It is natural to expect that this new $\K$-theory is presented by using ``finite dimensional'' objects such as vector bundles or projections instead of Fredholm operators. In general, it does not go on even if the groupoid is proper. However, when the groupoid and its central extension has enough finite dimensional representations, a desired presentation is given in Section \ref{section:5} as a generalization of van Daele's formulation of $\K$-theory for $\Zt$-graded Banach algebras. Here, the boundary map of the long exact sequence is also presented in a simple way by the exponential map. Moreover, we obtain a more simple presentation of twisted equivariant $\K$-groups for ungraded $\Cst$-algebras which is similar to Karoubi's $\K$-theory~\citelist{\cite{MR0238927}\cite{MR2458205}}. 

This presentation of the twisted equivariant $\K$-groups is applied for a topological classification of gapped Hamiltonians of fermionic quantum systems. The boundary map of the Toeplitz exact sequence gives the bulk-edge correspondence of topological insulators. In \cite{Kubota2}, the author consider the coarse Mayer-Vietoris exact sequence for twisted equivariant $\K$-groups of Roe algebras in order to prove the bulk-edge correspondence for quantum systems which is not translation-invariant such as quasi-crystals. 

In this context, Kellendonk~\cite{mathKT150906271} gives a detailed calculation of van Daele's $\K$-groups for certain $\Zt$-graded Real $\Cst$-algebras in connection with the classification of topological phases for each of $10$ symmetry types in Kitaev's periodic table. 
We remark that these groups are the same thing as the description given in Theorem \ref{cor:triv} of the twisted equivariant $\K$-groups when we consider the group $\sA$ and a twist $(\phi,c,\tau)$ as in Example \ref{exmp:CT}.

\subsection*{Notations}
We use the following notations throughout this paper.
\begin{itemize}
\item We assume that (topological) groupoids $\G=(\G^0,\G^1,s,r)$ are second countable, locally compact and Hausdorff groupoids with a Haar system (Definition 2.2 of \cite{MR584266}). A groupoid is proper if the map $(s,r) : \G^1 \to \G ^0 \times \G^0$ is proper. We use the convention $\G _x^y:=(s,r)^{-1}(x,y)$, $\G ^2:=\G ^1 {}_s \hspace{-0.2em}\times_{r} \G^1$ i.e.\ $g \circ h$ is well-defined if $s(g)=r(h)$.  
\item For a pre-simplicial space $M_\bullet$ (see Section 2 of \cite{MR2231869}), we write $\tilde{\varepsilon}_i:M_n \to M_{n-1}$ ($i=0,\dots, n$) for the face maps induced from the unique increasing map $\varepsilon _i$ that avoids $i$.
\item For a $\Cst$-algebra $A$, let $A_\real$ denote the underlying real $\Cst$-algebra. We use the same symbol $A_\real$ for the Real $\Cst$-algebra $A_\real \otimes _\real \comp$.
\item For a Hilbert space $\Hilb$ (resp.\ a Hilbert $\Cst$-module), let $\Kop (\Hilb)$ and $\Bop (\Hilb)$ (resp.\ $\Lop (\Hilb)$) denote the compact operator algebra and the bounded operator algebra on $\Hilb$. We write $\hat \Mop _{p,q}$ for the (Real) matrix algebra $\Kop (\real ^n \oplus (\real ^{\mathrm{op}})^m)$. \item We write as $\alpha _g^A$ (or simply $\alpha _g$ unless it causes some confusion) for a group action on a $\Cst$-algebra $A$. Similarly, we write as $u _g^E$ (or simply $u _g$) for a representation on a Hilbert $\Cst$-module $E$.
\item Throughout this paper we deal with $\Zt$-graded $\Cst$-algebras, that is, $\Cst$-algebras together with the involutive $\ast$-automorphism $\gamma _A$ (or simply $\gamma$). Let $A ^{\mathrm{even}}$ and $A ^{\mathrm{odd}}$ denote the $+1$ and $-1$ eigenspaces of $\gamma$ respectively. We use the notation $[a,b]$ for the supercommutator $[a,b]:=ab - (-1)^{|a| \cdot |b|}ba$.
\item For a complex number $\lambda$ (resp.\ a complex line bundle, an element in a Real $\Cst$-algebra), we write as $\lu{0}\lambda:=\lambda$ and $\lu{1}\lambda :=\overline{\lambda}$. In the same way, we say a map between vector spaces is $0$-linear (resp.\ $1$-linear) if it is linear (resp.\ antilinear).
\item Let $\Cl _{n,m}$ denote the Real Clifford algebra associated to $\real ^{n+m}$ together with the inner product $\ebk{\xi,\xi}=-(\xi _1)^2-\dots -(\xi _n)^2+(\xi _{n+1})^2+\dots +(\xi _{n+m})^2$. That is, $\Cl _{n,m}$ is the finite dimensional Real $\Cst$-algebra generated by odd Real elements $f_1,\dots ,f_n$ and $e_1,\dots ,e_m$ with relations $[e_i,e_j]=2\delta _{i,j}$, $[f_i,f_j]=-2\delta _{i,j}$ and $[e_i,f_j]=0$. Let $C_{n,m}$ denote the Clifford group, that is, the subgroup of $\Cl _{n,m}^\times $ generated by $\{ e_1,\dots ,e_m, f_1 ,\dots ,f_n\}$. 
\end{itemize}

\section{Extensions and Brauer groups on groupoids}\label{section:2}
In this section, we review relations between twists, Brauer groups and \v{C}ech cohomology groups and generalize them for twists in the sense of Freed-Moore~\cite{MR3119923}. Most arguments are based on the works on \cite{MR1646047} and \cite{MR2231869}. 

First of all, we show the conclusion of this section. 
\begin{thm}\label{thm:twist}
Let $\G$ be a groupoid and let $\phi$ be a $\Zt$-valued $1$-cocycle of $\G$. Then, the following three groups are canonically isomorphic:
\begin{enumerate}
\item the $\phi$-twisted graded Brauer group $\lu{\phi}\gBr (\G)$,
\item the group $\lu \phi \gTw (\G)$ of twists on $\G$ associated to $\phi$,
\item the \v{C}ech cohomology group $\check{H}^1(\G ;\zahl _2) \oplus _\epsilon \check{H}^2(\G ;\T _\phi)$.
\end{enumerate}
\end{thm}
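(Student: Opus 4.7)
The isomorphisms are well-known in the ungraded case (Kumjian--Muhly--Renault--Williams \cite{MR1646047}) and in the graded but $\phi$-untrivial-free case (Tu--Xu--Laurent-Gengoux \cite{MR2231869}), so the strategy is to follow their blueprint while carefully tracking the $\phi$-linearity datum throughout. Concretely, I would construct a triangle of maps $(2)\to(3)\to(1)\to(2)$ and verify that each composite is the identity on representatives. The group structure and the semidirect-sum extension $\oplus_\epsilon$ then transfer automatically once the maps are shown to be well-defined and bijective on isomorphism classes.

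\textbf{Step 1: from twists to \v{C}ech cohomology.} A twist $(c,\tau)\in\lu{\phi}\gTw(\G)$ consists of a grading cocycle $c\in Z^1(\G;\Zt)$ together with a $\phi$-twisted central extension $\tau$ of $\G$ by $\T$. I would choose a pre-simplicial atlas of $\G^\bullet$, i.e.\ an open cover $\{U_i\}$ of $\G^0$ fine enough that $\tau$ trivialises over each $U_i\times_{\G^0}\G^1\times_{\G^0}U_j$. Writing the resulting transition data as cochains on the simplicial nerve yields a \v{C}ech $2$-cocycle valued in $\T_\phi$: the twisted coefficient sheaf arises precisely because $\phi$-antilinear elements of $\G$ act on $\T$ by complex conjugation. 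Together with the class of $c$ in $\check{H}^1(\G;\Zt)$, this defines the image in $\check{H}^1(\G;\Zt)\oplus_\epsilon\check{H}^2(\G;\T_\phi)$. The inverse map $(3)\to(2)$ is the standard clutching construction over the simplicial nerve, and well-definedness in both directions follows from a refinement argument.

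\textbf{Step 2: from twists to the graded Brauer group.} From $(c,\tau)$ I would form the associated graded Real $\G$-equivariant continuous-trace algebra: fix a $\Zt$-graded separable Hilbert space $\Hilb$ on which $C_{1,0}\subset\Cl_{1,0}$ acts, let $\tau$ act on sections of $\Kop(\Hilb)$ via the induced projective unitary representation twisted by $\phi$, and let $c$ shift the grading. The resulting $\Zt$-graded Real $\Cst$-algebra represents a class in $\lu{\phi}\gBr(\G)$. Conversely, any continuous-trace representative is locally Morita equivalent to an algebra of this form by the classical Dixmier--Douady stabilisation theorem, and extracting the structure group of the stabilised $\Kop$-bundle recovers $(c,\tau)$ up to equivalence.

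\textbf{Main obstacle.} The non-trivial extension $\oplus_\epsilon$ is the chief difficulty: the product of two twists $(c_1,\tau_1)\cdot(c_2,\tau_2)$ is not $(c_1+c_2,\,\tau_1+\tau_2)$ but carries a correction $\epsilon(c_1,c_2)$ arising from a Bockstein-type coupling between $\check{H}^1(\G;\Zt)$ and $\check{H}^2(\G;\T_\phi)$. The technical heart of the proof is to verify that the \emph{same} cocycle $\epsilon$ appears in all three pictures --- as the failure of bilinear additivity of grading cocycles in $\gTw$, as the commutation factor between the $c$-graded and the $\tau$-extended parts in the cohomological description, and as the graded tensor-product rule in $\gBr$. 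Sign conventions for $\phi$-antilinearity and for the $\T$-action must be chosen compatibly at every step, and I expect this bookkeeping to occupy most of the actual work; once it is done, the three bijections assemble into group isomorphisms without further input.
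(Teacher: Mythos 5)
Your proposal follows essentially the same route as the paper: the identification of $\lu \phi \gTw (\G)$ with $\check{H}^1(\G;\Zt)\oplus_\epsilon\check{H}^2(\G;\T_\phi)$ via trivialisation over a (pre-simplicial) cover is exactly the content the paper imports from \cite{MR2231869}, and your passage between twists and the Brauer group --- sending $(c,\tau)$ to the compact operators on a stabilised $(\phi,c,\tau)$-twisted Hilbert bundle, with the inverse obtained by stabilising an elementary bundle, locally trivialising it, and extracting the $\T$-extension from the lifted (anti)unitary implementers --- is the same construction the paper carries out following Theorem 10.1 of \cite{MR1646047}, with the $\epsilon(c_1,c_2)$ correction appearing through the graded tensor product just as in the paper's Lemma \ref{lem:tensor}. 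The argument is correct as proposed.
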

Consequently, twists of $\G$ are classified by
\[(\phi , c, \tau) \in \coprod _{\phi \in \check{H}^1(\G;\zahl _2)}\check{H}^1(\G ;\zahl _2) \oplus _\epsilon \check{H}^1(\G ; \T _\phi).\]

Before the proof, we introduce the definition of each object appearing in the statement of Theorem \ref{thm:twist}. 

The \v{C}ech cohomology groups for groupoids $\G$ and $\G$-sheaves are the same thing as the one introduced in the Section 4 of \cite{MR2231869}. Let $G$ be an abelian group. By Proposition 5.2 of \cite{MR2231869}, a $G$-valued $1$-cocycle $\phi$ corresponds to a principal $G$-bundle $p_\phi : \G ^0_\phi \to \G ^0$ with $\G$-action. In other words, $\phi$ determines a Hilsum-Skandalis map \cite{MR925720} from $\G$ to $G$. By replacing $\G$ with another groupoid $\G_\phi :=\G \ltimes \G_\phi ^0 \rtimes G$ which is Morita equivalent to $\G$, we may assume that $\phi$ is represented by a groupoid homomorphism $\phi : \G_\phi \to G$. 

Let $\epsilon$ denote the $2$-cocycle on the abelian group $\check{H}^1(\G ;\zahl _2)$ which takes value in $\check{H}^2(\G ;\T _\phi)$ determined by
\[ \epsilon (c_1,c_2)(g,h):=(-1)^{c_2(g)\cdot c_1(h)} \]
and let $\check{H}^1(\G ;\zahl _2) \oplus _\epsilon \check{H}^2(\G ;\T _\phi)$ denote the central extension group corresponding to $\epsilon$.

\begin{defn}[Definition 7.23 of \cite{MR3119923}]
Let $\G$ be a groupoid and let $\phi$ be a groupoid homomorphism from $\G$ to $\Zt$. 
\begin{itemize}
\item A $\Zt$-grading of $\G$ is a groupoid homomorphism $c : \G \to \Zt$. 
\item A $\phi$-twisted extension of $\G$ is a principal $\T$-bundle $\tau: (\G^\tau)^1 \to \G ^1$ together with the linear isomorphism
\[\lambda _{g_2,g_1}:\lu{\phi(g_1)}L_{g_2}^\tau \otimes L_{g_1}^\tau \to L_{g_2g_1}^\tau\]
(where $L^\tau :=(\G ^\tau)^1 \times _{\T} \comp$) such that the digram
\[
\xymatrix@C=5em{
\lu{\phi (g_2g_1)}L_{g_3}^\tau \otimes \lu{\phi (g_1)} L_{g_2}^\tau \otimes L_{g_1}^\tau \ar[r]^{ \ \ \ \ \id \otimes \lambda _{g_2,g_1}} \ar[d]^{\lambda _{g_3,g_2} \otimes \id}& L_{g_3}^\tau \otimes L_{g_2g_1}^\tau \ar[d]^{\lambda _{g_3,g_2g_1}} \\
\lu{\phi (g_1)} L_{g_3g_2}^\tau \otimes L_{g_1}^\tau \ar[r]^{\lambda{g_3,g_2g_1}} &L_{g_3g_2g_1}^\tau
}
\]
commutes. 
\end{itemize}
We say that a triple $(\phi,c,\tau)$ as above is a \emph{twist} of $\G$. 
\end{defn}

\begin{exmp}
Let $\tau$ be a central extension of $\G$ by $\Zt$. Then, the associated bundle $\G^\tau \times _{\Zt} \T$ with respect to the inclusion $\Zt \cong \{ \pm 1 \} \subset \T$ is a $\phi$-twisted central extension of $\G$ by the product $(g, t) \cdot (h,s) :=(gh, \lu{\phi (g)}st)$. 
\end{exmp}

\begin{exmp}\label{exmp:real}
Consider the group $\G =\Zt$ with the generator $\underline{b}$ and the group homomorphism $\phi _\real :=\id : \Zt \to \Zt$. Then, the isomorphism class of a $\phi _\real$-twisted central extension $\tau$ of $\Zt$ by $\mathbb{T}$ is determined by the square $b^2 \in \T$ of a lift $b$ of $\underline{b}$. By the relation $b \cdot b^2=b^2 \cdot b= b \overline{b^2}$, the scalar $b ^2$ must be $\pm 1$. The corresponding two $\phi_\real$-twisted extensions are denoted by $\tau _\real ^0$ and $\tau _{\mathbb{H}}^0$ respectively.
For a central extension $\tau$ of $\G$ by $\Zt$, let $\tau _\real$ and $\tau _\mathbb{H}$ denote central extensions $\tau \times \tau _{\real}^0$ and $\tau \times \tau _\mathbb{H}^0$ of $\G _\real := \G \times \Zt$ respectively. 
\end{exmp}

\begin{exmp}\label{exmp:Cl}
Let $F_{n,m}$ denote the group $\Zt^{n+m}=\Zt \underline{f}_1 \oplus \dots \oplus \Zt \underline{f}_ n \oplus \Zt \underline{e}_1\oplus \dots \oplus \Zt \underline{e}_m $. Then, the canonical homomorphism from the Clifford group $C_{n,m}$ to $F_{n,m}$ mapping $e_i$ and $f_j$ to $\underline{e}_i$ and $\underline{f}_j$ respectively determines a central extension of $F_{n,m}$ by $\Zt$. We write $\tau _{n,m}^0$ for this extension and $c_{n,m}^0$ for the group homomorphism $F_{n,m} \to \Zt$ given by $c_{n,m}(\underline{e}_i)=c_{n,m}(\underline{f}_j)=1$. 
\end{exmp}

\begin{exmp}\label{exmp:CT}
Let $\sG$ denote the finite abelian group $\Zt \times \Zt$ with the generator $\underline{C}:=(1,0)$ and $\underline{T}:=(0,1)$. There are three proper subgroups $\sC$, $\sP$ and $\sT$ of $\sG$ generated by $\underline{C}$, $\underline{C} \underline{T}$ and $\underline{T}$. We fix two group homomorphisms $\phi_\sG , c_\sG :\sG \to \Zt$ defined by $\phi _\sG (\underline{T})=\phi _\sG (\underline{C})=1$, $c _\sG(\underline{C})=1$ and $c _\sG(\underline{T})=0$. 
According to Lemma 6.17 and Proposition 6.4 of \cite{MR3119923}, every central extension $\mathscr{A}^\tau$ of a subgroup $\sA$ of $\sG$ by $\Zt$ has a unique lift $C,T$ of $\underline{C}, \underline{T}$ such that $(CT)^2=1$ and the pairs $(\sA, \tau)$ are classified by the choice of signs $C^2=\pm 1$ and $T^2=\pm 1$. 
We say that a \emph{CT-type symmetry} is a quadruple $(\G,\phi,c,\tau)$ where $\G =\G_0 \times \mathscr{A}$, $(\phi,c)=(\phi _\sG, c_\sG ) \circ \pr _2$ and $ \G ^\tau =\G _0^{\tau _0}\times \sA^{\tau_\mathscr{A}}$. For a CT-type symmetry, the pair $(\mathscr{A} , \tau _{\mathscr{A}})$ is called its \emph{CT-type}. 
\end{exmp}

For a groupoid homomorphism $\phi : \G \to \Zt$, let $\T _\phi$ denote the $\G$-module $\G \times \T$ with the $\G$-action $\alpha _g(t)=\lu {\phi (g)}t$. Let $\mathrm{ext}(\G,\T_\phi)$ denote the set of equivalent classes of $\phi$-twisted extensions, which is an abelian group under the Baer sum $[L_1] + [L_2]:=[L_1 \otimes L_2]$. Let $\lu \phi \widehat{\mathrm{tw}} (\G)$ denote the group of twists $\Hom (\G , \Zt) \oplus _\varepsilon \mathrm{ext}(\G,\T_\phi)$ associated to $\phi$. Here, for $c_1,c_2 \in \Hom (\G , \Zt)$, $\varepsilon (c_1,c_2)$ denotes the central extension $\G \times \Zt$ of $\G$ by $\Zt$ with the product $(g,i)\cdot (h,j):=(gh,i+j+c_2(g)c_1(h))$.

The group $\lu \phi \widehat{\mathrm{tw}} (\G)$ is related to \v{C}ech cohomology groups by the following way. For a groupoid $\G$ and an open covering $\U $ of $\G ^0$, $\G[\U]$ denotes the groupoid given by $\G[\U]^0:=\coprod _{U \in \U} U$, 
\[
\G[\U]^1:=\{ (y,g,x) \in \G[\U]^0 \times \G^1  \times \G[\U]^0 \mid s(g)=\pi(x), r(g)=\pi (y) \}
\]
where $\pi$ is the canonical projection from $\G[\U]^0$ to $\G$, $s$ and $r$ are the third and first projections and $(z,g,y)\cdot (y,h,x):=(z,gh,x)$. Remark that this groupoid is Morita equivalent to $\G$. 

\begin{prp}[Proposition 5.2 and Proposition 5.6 of \cite{MR2231869}]
The inductive limit
\[
\lu \phi \gTw (\G) := \varinjlim _{\U}\lu \phi \widehat{\mathrm{tw}}(\G[\U])
\]
is isomorphic to $\check{H}^1(\G;\Zt) \oplus _\epsilon \check{H}^2(\G; \T_\phi)$. 
\end{prp}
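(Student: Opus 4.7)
The plan is to split $\lu\phi\widehat{\mathrm{tw}}(\G[\U]) = \Hom(\G[\U], \Zt) \oplus_\varepsilon \mathrm{ext}(\G[\U], \T_\phi)$ into its two constituent factors, take the direct limit of each over open refinements of $\U$, and then match the controlling $2$-cocycle. This reduces the statement to two separate colimit identifications plus a compatibility check.

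For the first factor, I would show $\varinjlim_\U \Hom(\G[\U], \Zt) \cong \check{H}^1(\G; \Zt)$. A groupoid homomorphism $c : \G[\U]^1 \to \Zt$ is the same datum as a locally constant $\Zt$-valued function on $\G^1$ subordinate to $\U$ that satisfies the $1$-cocycle relation $c(g_2 g_1) = c(g_2) + c(g_1)$, which is precisely the \v{C}ech $1$-cocycle condition for the constant $\G$-sheaf $\Zt$. Because $\G$ acts trivially on $\Zt$, and a group homomorphism has no proper coboundary, passage to the colimit recovers $\check{H}^1(\G; \Zt)$ with no further cohomological quotient — this is the content of Proposition 5.2 of \cite{MR2231869} specialised to the coefficient group $\Zt$.

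For the second factor, I would establish $\varinjlim_\U \mathrm{ext}(\G[\U], \T_\phi) \cong \check{H}^2(\G; \T_\phi)$ by trivialising sections. Given a $\phi$-twisted extension $(L, \lambda)$ on $\G[\U]$, after refining $\U$ if necessary, I pick trivialising sections of the principal $\T$-bundle over each piece $\G[\U]^1_{UV}$. The structure isomorphism $\lambda_{g_2, g_1} : \lu{\phi(g_1)} L_{g_2} \otimes L_{g_1} \to L_{g_2 g_1}$ then converts these sections into a $\T$-valued cochain $\tau$ on $\G[\U]^2$, and the pentagon for $\lambda$ translates into the twisted $2$-cocycle identity with values in $\T_\phi$. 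Conversely, any such $2$-cocycle glues to produce a $\phi$-twisted extension; different choices of local sections differ by a \v{C}ech coboundary, so this descends to an isomorphism in the colimit. Finally, I would check that the extension-group structure matches: the central extension $\varepsilon(c_1, c_2) = \G \times \Zt$ with product $(g, i) \cdot (h, j) = (gh, \, i + j + c_2(g) c_1(h))$ corresponds, under the inclusion $\Zt = \{ \pm 1\} \hookrightarrow \T$, to the scalar $2$-cocycle $(g,h) \mapsto (-1)^{c_2(g) c_1(h)}$, which is precisely $\epsilon(c_1, c_2)$.

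The main obstacle is the second step. The pentagon identity for $\lambda$ carries a factor of $\lu{\phi(g_1)}(\cdot)$, so translating it into the $\T_\phi$-valued $2$-cocycle identity demands careful bookkeeping of the antilinear actions — in particular, that the coboundary operator in the \v{C}ech complex of the twisted sheaf $\T_\phi$ agrees with the one induced from the pentagon. This is the place where the Freed--Moore $\phi$-twisted generalisation genuinely contributes beyond the untwisted statement of \cite{MR2231869}, and the compatibility of the $\phi$-action with the simplicial differentials $\tilde{\varepsilon}_i$ on $\G[\U]^\bullet$ must be verified explicitly. Once that bookkeeping is done, the matching of $\varepsilon$ with $\epsilon$ is a direct computation.
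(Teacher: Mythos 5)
The paper does not prove this proposition at all: it is imported wholesale from Propositions 5.2 and 5.6 of Tu--Xu--Laurent-Gengoux, so the only meaningful comparison is between your outline and the cited arguments. Your overall strategy (split $\lu\phi\widehat{\mathrm{tw}}$ into the grading factor and the extension factor, identify each inductive limit with the corresponding \v{C}ech group, then match $\varepsilon$ with $\epsilon$ through $\Zt=\{\pm1\}\subset\T$) is exactly the intended route, and your treatment of the extension factor --- trivialise the $\T$-bundle after refinement, read off a $\T_\phi$-valued $2$-cocycle from $\lambda$, observe that the associativity diagram with its $\lu{\phi(g_1)}(\cdot)$ gives the twisted cocycle identity, and note that changes of local section give coboundaries, so that isomorphism classes in the colimit yield $\check{H}^2(\G;\T_\phi)$ --- is the correct adaptation of Proposition 5.6 to the Freed--Moore setting.

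The genuine gap is in your first factor. The assertion that ``a group homomorphism has no proper coboundary,'' so that $\varinjlim_\U\Hom(\G[\U],\Zt)$ equals $\check{H}^1(\G;\Zt)$ with no further quotient, is only true when $\G^0$ is a point. For the covering groupoids $\G[\U]$ --- which are the whole point of the construction --- any locally constant $b:\coprod_{U\in\U}U\to\Zt$ gives a genuine, generally nonzero homomorphism $c(y,g,x)=b(y)-b(x)$, and such coboundaries need not die under pullback to a refinement: already for the trivial groupoid over $S^1$ with the two-arc cover, the coboundary of $(b_1,b_2)=(0,1)$ pulls back to a nonzero cocycle on every refinement, yet its \v{C}ech class is zero. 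So the naive colimit of the $\Hom$-groups is strictly larger than $\check{H}^1(\G;\Zt)$. What is needed (and what Proposition 5.2 of the cited paper actually provides, via generalized homomorphisms) is to identify homomorphisms $\G[\U]\to\Zt$ that differ by such a coboundary, i.e.\ by a natural transformation with values in $\Zt$; equivalently, one must observe that the transition maps of the inductive system themselves depend on the choice of refinement map only up to exactly these coboundaries, so the limit can only be formed after passing to that quotient. Once this identification is built into the degree-one factor (and into the statement of what an isomorphism of twists is), the rest of your outline goes through.
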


Next, we turn to the classification of certain $\phi$-twisted $\G$-equivariant bundles of $\Cst$-algebras. Before that, we prepare terminologies of linear actions and representations of groupoids for the convenience of readers. Basic references are Section 13--14 of \cite{MR936628} and \cite{MR1646047}.

Let $X$ be a locally compact second countable Hausdorff space. A \emph{Banach bundle} or a \emph{continuous field of Banach spaces} over $X$ is a topological space $\mathcal{E}$ together with a continuous open surjection $p:\mathcal{E} \to X$, continuous maps $\ssbk{\blank}:\mathcal{E} \to \real _+$, $\blank +\blank : \mathcal{E} \times _X \mathcal{E} \to \mathcal{E}$ and $\blank \cdot \blank : \comp \times \mathcal{E} \to \mathcal{E}$ such that these operations determine the Banach space structure on each fiber $\mathcal{E}_x:=p^{-1}(x)$ and any nets $\xi_j$ in $\mathcal{E}$ such that $p(\xi_j) \to x$ and $\ssbk{\xi_j} \to 0$ converges to $0_x$. For a Banach bundle $\mathcal{E}$, the space $\Gamma_0 (X,\mathcal{E})$ of continuous sections vanishing at infinity is a Banach space which is embedded in $\prod _{x \in X} \mathcal{E}_x$. It is known that a Banach bundle $\mathcal{E}$ over a locally compact Hausdorff space $X$ has enough continuous cross-sections, that is, for any $\xi_x \in E_x$ there is a continuous section $\xi \in \Gamma (X,E)$ such that $\xi (x)=\xi _x$ (see Appendix C of \cite{MR936628}).  

A (strongly continuous) \emph{$\phi$-twisted $\G$-Banach bundle} is a Banach bundle $\mathcal{E}$ over $\G ^0$ together with a continuous bundle map $\alpha : s^*\mathcal{E} \to r^*\mathcal{E}$ such that each $\alpha _g : \mathcal{E}_{s(g)} \to \mathcal{E}_{r(g)}$ is a bounded linear map, the map $g \mapsto \ssbk{\alpha _g(\xi (s(g)))}$ is continuous for any $\xi \in \Gamma _0(X, \mathcal{E})$ and $\alpha _{g}\alpha _{h}=\alpha _{gh}$, $\alpha _{\id _x}=\id _{\mathcal{E}_x}$. We say that a $\real$-linear action $\alpha$ of $\G$ on a $\Zt$-graded Banach bundle $\mathcal{E}$ is \emph{$\phi$-linear} if each $\alpha _g$ is $\phi (g)$-linear and \emph{$c$-graded} if each $\alpha _g$ is even if $c(g)=0$ and odd $c(g)=1$. 

In this paper, we mainly deal with $\G$-bundles of Hilbert spaces and $\Cst$-algebras. 
A \emph{Hilbert bundle} $\mathcal{H}$ over $X$ is a Banach bundle together with the inner product $\ebk{\blank , \blank}: \mathcal{H} \times _X \mathcal{H} \to \comp$ such that $\ssbk{\xi}^2=\ebk{\xi,\xi}$ for any $\xi \in \mathcal{H}$. It is shown in Proposition 2 of \cite{MR2917284} that the section space $\Gamma _0(X, \mathcal{H})$ has a canonical Hilbert $C_0(X)$-module structure and every Hilbert $C_0(X)$-module is isomorphic to the section space of a Hilbert bundle. We say that a linear action $u$ of $\G$ on a Hilbert bundle $\mathcal{H}$ over $\G^0$ is unitary if each $u_g$ is a $\phi$-linear unitary operator (i.e.\ $u_g$ is bijective and satisfies $(u_g\xi,u_g\eta)=(\xi,\eta)$ for any $\xi , \eta \in \mathcal{H}_{s(g)}$).

\begin{defn}
A \emph{$(\phi,c,\tau)$-twisted $\G$-Hilbert bundle} is a Hilbert bundle $\mathcal{H}$ over $\G ^0$ together with a $\phi$-linear $c$-graded unitary action $u$ of $\G ^\tau$ on $\mathcal{H}$ whose restriction on ${\mathbb{T}}$ is given by the complex multiplication. We say that a $(\phi,c,\tau)$-twisted $\G$-Hilbert bundle is a \emph{$(\phi,c,\tau)$-twisted unitary representation} of $\G$ if the underlying Hilbert bundle is locally trivial whose structure group is the unitary group $\mathcal{U}(\Hilb _0)$ with the strong topology.
\end{defn}
Here, we use the letter $\Hilb _0$ for the abstract separable infinite dimensional Hilbert space $\ell ^2 \nat$. By the Kasparov stabilization theorem, for any non-trivial Hilbert bundle $\mathcal{H}$ over a locally compact second countable space $X$, $\mathcal{H} \otimes \Hilb _0$ is isomorphic to the trivial bundle $X \times \Hilb _0$ (cf.\ Proposition 7.4 of \cite{MR1325694}). Therefore, for any $(\phi,c,\tau)$-twisted Hilbert $\G$-bundle $\mathcal{H}$, $\mathcal{H} \otimes \Hilb _0$ is a $(\phi,c,\tau)$-twisted unitary representation.

\begin{exmp}
Set
\[C_c(\G, L^\tau):=\{ \xi \in C_c(\G ^\tau) \mid \xi (t^{-1} g)=\lu {\phi (g)} t \cdot  \xi (g) \text{ for any $t \in \T$}\}, \]
which is a $\Zt$-graded complex vector space by the scalar multiplication $(\lambda \cdot \xi)(g):=\lu{\phi (g)}\lambda \xi (g)$ and the $\Zt$-grading $(\gamma \xi )(g):=(-1)^{c(g)}\xi (g)$.
Let $\lu \phi L^2_{c,\tau}\G$ be the Hilbert $C_0(\G ^0)$-module given by the completion of $C_c(\G)$ with respect to the inner product $\ebk{\xi,\eta}(x)=\int \xi (g)\overline{\eta (g)} d\lambda _x (g)$.
Then, the corresponding Hilbert bundle over $\G ^0$ is a $(\phi ,c,\tau )$-twisted $\G$-Hilbert bundle whose fiber on $x$ is $L^2 (\G _x , L^\tau) $ with the left regular representation $u_g(\xi)(h)=\xi(g^{-1} h)$. 
Set $\lu \phi \Hilb _{\G}^{c,\tau}:=\lu \phi L^2_{c,\tau}\G \hotimes \hat{\Hilb} _0$ and let $\lu \phi \mathcal{H}_\G ^{c,\tau}$ denote the corresponding $(\phi,c,\tau)$-twisted unitary representation. 
\end{exmp}

A \emph{$\phi$-twisted $\G$-$\Cst$-bundle} is a $\phi$-twisted $\G$-Banach bundle $(\mathcal{A},\alpha)$ together with continuous maps $\blank \cdot \blank : \mathcal{A} \times _X \mathcal{A} \to \mathcal{A}$ and $\ast : \mathcal{A} \to \mathcal{A}$ such that each $\mathcal{A}_x$ is a $\Cst$-algebra and each $\alpha _g: \mathcal{A}_{s(g)} \to \mathcal{A}_{r(g)}$ is a $\ast$-homomorphism by these operations. 
The space of continuous sections vanishing at infinity $\Gamma _0(X,\mathcal{A})$ of a $\phi$-twisted $\G$-$\Cst$-bundle $\mathcal{A}$ is a $\phi$-twisted $\G$-$\Cst$-algebra (which will be introduced in Definition \ref{def:GCst}) by the fiberwise $\ast$-operation, the sup norm and the canonical $\G$-action. 

We say that two $\phi$-twisted $\G$-$\Cst$-algebras $A$ and $B$ are Morita-equivalent if there is a $\phi$-twisted $\Zt$-graded $\G$-equivariant Hilbert $B$-module $E$ (Definition \ref{def:Gmod}) such that $\ebk{E,E}=B$ and $\Kop (E)\cong A$. Two $\phi$-twisted $\G$-$\Cst$-bundles are Morita-equivalent if the corresponding $\G$-$\Cst$-algebras are Morita-equivalent.

\begin{defn}\label{def:bdl}
We say that a $\G$-$\Cst$-bundle $\mathcal{A}$ is ($\Zt$-graded) \emph{elementary} if each fiber is isomorphic to the algebra of compact operators on $\Zt$-graded separable Hilbert spaces and satisfies \emph{Fell's condition}, that is, for any $x \in \G ^0$ there is a neighborhood $U$ of $x$ and a section of rank $1$ projections on $U$. The \emph{$\phi$-twisted Brauer group} $\lu{\phi}\gBr (\G)$ of $\G$ is the group of Morita-equivalent classes of elementary $\phi$-twisted $\G$-$\Cst$-bundles under the product given by the graded tensor product.
\end{defn}
Note that the underlying $\Cst$-bundle over $\G ^0$ of a stable (i.e.\ $\mathcal{A} \cong \mathcal{A} \otimes \Kop$) elementary $\G$-$\Cst$-bundle $\mathcal{A}$ is locally trivial (see Theorem 10.7.15 of \cite{MR0458185}).

\begin{exmp}\label{exmp:rep}
Let $\mathcal{H}$ be a $(\phi,c,\tau)$-twisted $\G$-Hilbert bundle and $\Hilb :=\Gamma _0(\G ^0,\mathcal{H})$. Then, $\Kop (\Hilb)$ is isomorphic to the section algebra of a $\phi$-twisted $\G$-$\Cst$-bundle (see 10.7.1 of \cite{MR0458185}). Hereafter we use the same symbol $\Kop (\Hilb)$ for this $\phi$-twisted $\G$-$\Cst$-bundle. Moreover, it satisfies Fell's condition because a local section of rank $1$ projections in $\Kop (\Hilb)$ corresponds to a continuous non-zero local section of $\mathcal{H}$. For two $(\phi,c,\tau)$-twisted $\G$-$\Cst$-bundles $\Hilb _1$ and $\Hilb _2$, $\Kop (\Hilb _1)$ and $\Kop (\Hilb _2)$ are Morita-equivalent since $\Kop (\Hilb _1, \Hilb _2)$ is a $\Kop (\Hilb _1)$-$\Kop (\Hilb _2)$ imprimitivity bimodule. Therefore, we have a map $\lu \phi \gTw (\G) \to \lu \phi \gBr (\G)$ given by $(c,\tau) \mapsto  \lu \phi \Kop _{\G}^{c,\tau} :=\Kop (\lu \phi \Hilb _\G ^{c,\tau})$. This map is actually a group homomorphism since $\Kop (\Hilb _1) \hotimes \Kop (\Hilb _2) \cong \Kop (\Hilb _1 \hotimes \Hilb _2)$ and $\Hilb _1 \hotimes \Hilb _2$ is a $(\phi,c,\tau)$-twisted Hilbert $\G$-bundle where $c=c_1+c_2$ and $\tau=\tau _1 +\tau _2 +\varepsilon (c_1,c_2)$ as is shown in Lemma \ref{lem:tensor} in Section \ref{section:3}.
\end{exmp}

\begin{proof}[Proof of Theorem \ref{thm:twist}]
The rest part of the proof is constructing the inverse $\lu \phi \gBr (\G) \to \lu \phi \gTw (\G)$ of the homomorphism in Example \ref{exmp:rep}, which is given in the same way as in Theorem 10.1 of \cite{MR1646047}. We only remark that for a $\phi$-twisted elementary $\G$-$\Cst$-bundle whose underlying $\Cst$-bundle is isomorphic to $C_0(\G ^0) \hotimes \Kop (\Hilb _0)$ (and hence the $\G$-action is given by a groupoid homomorphism $\pi : \G ^1 \to \qAut \Kop (\Hilb _0)$), the groupoid 
\[
E(\pi):=\{ (g,u) \in \G \times \qAut \Hilb_0 \mid \pi (g)=\Ad (u) \}
\]
is a $\phi$-twisted central extension of $\G$ by $\mathbb{T}$ and the canonical action of $E(\pi)$ on $C_0(\G ^0) \hotimes \Hilb _0$ determines a $(\phi,c,\tau)$-twisted $\G$-Hilbert bundle. Here $\qAut \Kop (\Hilb _0)$ denotes the group of linear/antilinear even $\ast$-automorphisms on $\Kop (\Hilb _0)$ and $\qAut \Hilb _0$ denotes the group of linear/antilinear and even/odd unitary operators on $\Hilb _0$. 
\end{proof}

For a $\phi$-twisted elementary $\G$-$\Cst$-bundle $\mathcal{A}$, we call the corresponding element in $\check{H} ^1(\G ; \Zt ) \oplus _\epsilon  \check{H}^2 (\G ; \T _\phi) $ is the \emph{Dixmier-Douady invariant} of $\mathcal{A}$. 

\section{Twisted equivariant $\KK$-theory}\label{section:3}
In this section, we introduce the notion of twisted equivariant $\KK$-theory for pairs of $\phi$-twisted $\G$-$\Cst$-algebras.  
We start with the definition of $\phi$-twisted actions of groupoids on $\Cst$-algebras. 
For a locally compact Hausdorff space $X$, a $\Zt$-graded $X$-$\Cst$-algebra is a $\Zt$-graded $\Cst$-algebra $A$ together with a unital $\Zt$-graded $\ast$-homomorphism $C_b(X) \to ZM(A)$. For example, the section algebra $\Gamma _0(X, \mathcal{A}) $ of a $\Cst$-bundle over $X$ is a $X$-$\Cst$-algebra. Conversely, by Proposition A.3 of \cite{MR2119241}, a $X$-$\Cst$-algebra $A$ such that $C_0(X)A=A$ is isomorphic to the section algebra of an upper-semicontinuous $X$-$\Cst$-bundle (a variation of $X$-$\Cst$-bundle such that the norm $\ssbk{\blank}: \mathcal{A} \to \real _+$ is upper-semicontinuous). Here, the fiber $A_x$ of the corresponding upper-semicontinuous $\Cst$-bundle is given by
\[
A_x:=A/(\{ f \in C_b(X) \mid f(x)=0 \} A).
\]
Moreover, bundle maps $\mathcal{A} \to \mathcal{B}$ and pull-backs $f^*\mathcal{A}$ by a continuous map $f: Y \to X$ of $\Cst$-bundles correspond to $X$-$\ast$-homomorphisms ($\ast$-homomorphisms $\varphi: A \to B$ satisfying $\varphi (fa)=f\varphi (a)$) and the tensor product $C(Y) \otimes _X A$ (Definition 1.5 and Definition 1.6 of \cite{MR918241}) respectively. 

\begin{defn}[D\'{e}finition 3.4 of \cite{MR1686846}]\label{def:GCst}
A \emph{$\phi$-twisted $\Zt$-graded $\G$-$\Cst$-algebra} is a $\Zt$-graded $\G^0$-$\Cst$-algebra $A$ together with a $\G^1$-$\ast $-homomorphism $\alpha : s^*A \to r^*A$ such that each $\alpha _g : A_{s(g)} \to A_{r(g)}$ is a $\phi(g)$-linear $\Zt$-graded $\ast$-homomorphism satisfying $\alpha _g \circ \alpha _h=\alpha _{gh}$ and $\alpha _{\id _x}=\id _{A_x}$. 
\end{defn}

\begin{remk}\label{remk:cpx}
Let $A$ be a $\phi$-twisted $\G$-$\Cst$-algebra. Then, by the same action $\alpha $, the underlying Real $\Cst$-algebra $A_\real $ is regarded as a Real $\G$-$\Cst$-algebra. Actually, the correspondence $a \oplus jb \mapsto (a +ib) \oplus (a-ib)$ gives an isomorphism between $A_\real \cong A \oplus jA$ (where $j$ is the imaginary unit of the Real $\Cst$-algebra $A_\real$) and $p_\phi ^*A$ as $\G$-$\Cst$-algebras. Moreover, the right $\Zt$-action on $p_\phi^* A$ corresponds to the complex conjugation on $A_\real$.
\end{remk}

\begin{exmp}
A Real $\G$-$\Cst$-algebra $(A,\alpha ')$ is regarded as a $\phi$-twisted $\G$-$\Cst$-algebra by the action $\alpha _g(x):=\lu{\phi (g)}\alpha _g'(x)$. In particular, for a Real $\Cst$-algebra $A$, the $\Cst$-algebra $C_b(\G ^0, A)$ together with the trivial $\G$-action is a ``trivial'' $\phi$-twisted $\G$-$\Cst$-algebra. For simplicity of notation, we use the same letter $A$ for this $\phi$-twisted $\G$-$\Cst$-algebra. Conversely, a $\phi _\real$-twisted $\G_\real$-$\Cst$-algebra is the same thing as a Real $\G$-$\Cst$-algebra.
\end{exmp}

\begin{defn}\label{def:Gmod}
Let $A$ be a $\Zt$-graded $\phi$-twisted $G$-$\Cst$-algebra. A \emph{$(\phi,c, \tau)$-twisted $\G$-equivariant Hilbert $A$-module} is a $\Zt$-graded Hilbert $A$-module $E$ with the $(\phi,c,\tau)$-twisted representation of $\G$, that is, a $\phi$-linear action $u : s^*E \to r^*E$ of $\G ^\tau$ such that $u|_\mathbb{T}$ is given by the complex multiplication, $\ebk{u_g \xi , u_g \eta}=\alpha _{\tau(g)} (\ebk{\xi,\eta})$ and $\gamma _E(u_g \xi)=(-1)^{c(g)}u_g\gamma _E(\xi)$ for any $g \in (\G^\tau)^1$.
\end{defn}

For example, a $(\phi,c,\tau)$-twisted $\G$-equivariant Hilbert $\real$-module is the same thing as the section space of a $(\phi,c,\tau)$-twisted $\G$-Hilbert bundle. In general, for a $\Zt$-graded $\phi$-twisted Hilbert $\G$-module $E$ over $A$ and a $(\phi,c,\tau)$-twisted $\G$-Hilbert bundle $\Hilb$, the tensor product $E \hotimes \Hilb$ is a $(\phi,c,\tau)$-twisted $\G$-equivariant Hilbert $A$-module. 

In general, the twisted $\G$-equivariant Hilbert $A$-module structure on the interior tensor product of two Hilbert modules is given as the following lemma. Here, let us choose a pre-simplicial open cover $\mathcal{U}^\bullet =\{ U^\mu _\bullet \}_{\mu \in I ^\bullet }$ of $\G ^\bullet$ (see Definition 4.1 of \cite{MR2231869}) such that $\mathcal{U}^1$ trivializes the fiber bundle $(\G ^{\tau})^1 \to \G ^1$. Then, by taking local continuous sections, we obtain locally defined actions $u^\mu : s^*E|_{U_1^\mu} \to r^*E|_{U_1^\mu}$ of $\G$ (instead of $\G^\tau$) for each $\mu \in I^1$ and a $\mathbb{T}_\phi$-valued $2$-cocycle $\tau =\{ \tau ^\nu \}_{\nu \in I^2}$ such that $u_g^{\tilde{\varepsilon} _2 \nu }u_h^{\tilde{\varepsilon} _0 \nu}=\tau ^{\nu} (g,h)u_{gh}^{\tilde{\varepsilon} _1\nu }$ on $U^\nu_2$ for any $\nu \in I^2$. For simplicity of notations, we often omit the superscript $\mu$ and $\nu$ for these locally defined actions and $2$-cocycles.

\begin{lem}\label{lem:tensor}
Let $A_i$ be $\Zt$-graded $\phi$-twisted $\G$-$\Cst$-algebras, let $(E_i, u^i)$ be $(\phi , c_i, \tau _i)$-twisted $\G$-equivariant Hilbert $A_i$-modules for $i=1,2$ and let $\varphi : A_1 \to \Lop (E_2)$ be a $\ast$-homomorphism such that 
\maa{\alpha ^{\Lop (E_2)} _g  \varphi (a) =(-1)^{|a| \cdot c(g)} \varphi (\alpha _g^{A_1}(a)) \label{form:superhom}}
for any homogeneous $a \in (A_1)_{s(g)}$. Then, 
\[
u_g  (\xi \hotimes \eta) =(-1)^{c_2(g) \cdot |\xi|} u_g^1 \xi \hotimes u_g ^2 \eta
\]
determines a well-defined $(\phi,c, \tau)$-twisted Hilbert $G$-module structure on $E:= E_1 \hotimes _{A_1}E_2$ where $c:=c_1+c_2$ and $\tau :=\tau _1 +\tau _2 + \epsilon (c_1,c_2)$.
\end{lem}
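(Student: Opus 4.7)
The plan is to verify five properties of the formula defining $u_g$: well-definedness on the algebraic tensor product balanced over $A_1$, $\phi(g)$-linearity, compatibility with the $\Zt$-grading (yielding $c = c_1 + c_2$), preservation of the Hilbert $A_2$-valued pairing twisted by $\alpha^{A_2}_g$, and the local cocycle identity reproducing exactly $\tau = \tau_1 + \tau_2 + \epsilon(c_1,c_2)$.

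First I would check well-definedness on homogeneous elementary tensors by comparing $u_g(\xi a \hotimes \eta)$ and $u_g(\xi \hotimes \varphi(a)\eta)$. Unwinding both expressions, each produces $(-1)^{c_2(g)(|\xi|+|a|)}\,(u^1_g\xi)\alpha^{A_1}_g(a) \hotimes u^2_g\eta$: on the left this is immediate from the definition, while on the right one pushes $u^2_g$ past $\varphi(a)$ using the super-equivariance hypothesis~\eqref{form:superhom}, which supplies exactly the sign $(-1)^{|a|c_2(g)}$. The $\phi(g)$-linearity of $u_g$ is then automatic from the $\phi(g)$-linearity of each $u^i_g$, since scalars factor through the tensor product unambiguously.

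The grading and inner product compatibilities are Koszul calculations. For the grading, both sides of $\gamma_E u_g = (-1)^{c(g)} u_g \gamma_E$ yield the same sign $(-1)^{c_2(g)|\xi| + c_1(g) + c_2(g)}$ in front of $u^1_g\gamma_{E_1}(\xi) \hotimes u^2_g\gamma_{E_2}(\eta)$. For the pairing, expanding $\ebk{u_g(\xi_1 \hotimes \eta_1),\, u_g(\xi_2 \hotimes \eta_2)}$ via $\ebk{\xi\hotimes\eta,\, \xi'\hotimes\eta'} = \ebk{\eta,\, \varphi(\ebk{\xi,\xi'})\eta'}$ yields a Koszul prefactor $(-1)^{c_2(g)(|\xi_1|+|\xi_2|)}$; pushing $u^2_g$ past $\varphi(\alpha^{A_1}_g\ebk{\xi_1,\xi_2})$ using~\eqref{form:superhom} introduces a matching sign with $|a| = |\xi_1|+|\xi_2|$, and the two contributions cancel, reducing the statement to the $\tau_2$-equivariance of the $A_2$-valued inner product on $E_2$.

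The decisive step — and the main obstacle in terms of bookkeeping — is the cocycle identity, which is what actually determines the new twist. On a common pre-simplicial cover trivialising each $(\G^{\tau_i})^1 \to \G^1$, we have $u^i_g u^i_h = \tau_i(g,h)\, u^i_{gh}$. Regarding $u_g$ as the graded tensor product $u^1_g \hotimes u^2_g$ of operators of degrees $c_1(g)$ and $c_2(g)$, the Koszul rule for composition of graded tensor products gives
\[
u_g u_h \;=\; (u^1_g \hotimes u^2_g)(u^1_h \hotimes u^2_h) \;=\; (-1)^{c_2(g) c_1(h)}(u^1_g u^1_h) \hotimes (u^2_g u^2_h) \;=\; (-1)^{c_2(g) c_1(h)}\tau_1(g,h)\tau_2(g,h)\, u_{gh}.
\]
Since $\epsilon(c_1,c_2)(g,h) = (-1)^{c_2(g) c_1(h)}$ by definition, this is precisely the Baer-sum representative of $\tau_1 + \tau_2 + \epsilon(c_1,c_2)$. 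The hard part is simply organising Koszul signs consistently so that they land on this cocycle and not some other quadratic correction in $(c_1,c_2)$; everything else is a direct unwinding of definitions.
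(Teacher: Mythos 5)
Your proposal is correct and follows essentially the same route as the paper: well-definedness on the balanced algebraic tensor product via the super-equivariance relation \eqref{form:superhom}, preservation of the $A_2$-valued inner product (with the two Koszul signs cancelling, so the action extends to the completion), and the local cocycle computation yielding $\tau_1+\tau_2+\epsilon(c_1,c_2)$. Your packaging of the last step as the Koszul composition rule for $u^1_g\hotimes u^2_g$ is just a reformulation of the paper's elementwise calculation, and your sign $(-1)^{c_2(g)c_1(h)}$ is in fact the one matching the paper's definition of $\epsilon(c_1,c_2)$.
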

\begin{proof}
First we observe that the above $u_g$ determines a well-defined linear map on $E \otimes _{A_1,\mathrm{alg}} F$. Actually, for any $g \in \G$, $\xi \in (E_1)_{s(g)}$ and homogeneous elements $a \in (A_1)_{s(g)}$ and $\eta \in (E_2)_{s(g)}$,
\ma{
(-1)^{c_2(g) \cdot |\xi a|}(u_g^1(\xi a)) \hotimes (u_g^2\eta)&=(-1)^{c_2(g) \cdot (|\xi|+|a|)}(u_g^1 \xi) \alpha _g (a) \hotimes (u_g^2 \eta) \\
(-1)^{c_2(g) \cdot |\xi|}(u_g^1 \xi) \hotimes (u_g ^2(a\eta) )&=(-1)^{c_2(g) \cdot |\xi|}(u_g^1 \xi ) \hotimes ( (-1)^{c_2(g) \cdot |a|} \alpha _g(a) (u_g^2 \eta))\\
&=(-1)^{c_2(g) \cdot (|\xi|+|a|)}(u_g^1 \xi) \hotimes \alpha _g( a)(u_g^2 \eta).
}
Now for any homogeneous $\xi _i \in (E_1)_{s(g)}$ and $\eta _i \in (E_2)_{s(g)}$,
\ma{
\ebk{u_g(\xi_1 \hotimes \eta_1), u_g (\xi _2 \hotimes \eta _2)}&=(-1)^{c_2(g) \cdot (|\xi _1| +|\xi _2|)}\ebk{u_g^2 \eta_1, \ebk{u_g^1 \xi_1, u_g ^1 \xi_2}\cdot (u_g^2 \eta_2)}\\
&=(-1)^{c_2(g)\cdot (|\xi _1|+|\xi _2|)}\ebk{u_g^2 \eta_1, \alpha _g (\ebk{\xi_1,\xi_2})\cdot (u_g^2 \eta_2) }\\
&=\ebk{u_g^2 \eta _1, u_g^2 (\ebk{\xi _1,\xi _2}\cdot \eta _2)}\\
&=\alpha_g  (\ebk{ \xi_1 \hotimes \eta_1,\xi _2 \hotimes \eta_2})}
implies that the $G$-action on $E_1 \hotimes _{A,\mathrm{alg}}E_2$ preserves the inner product and hence extends to $E_1 \hotimes _AE_2$. The projective representation $u$ is $(\phi,c,\tau)$-twisted because
\ma{u_g u_h (\xi \hotimes \eta) &=(-1)^{c_2(h)|\xi|+c_2(g)|u_h \xi |}u_g^1u_h^1\xi \hotimes u_g^2u_h^2\eta\\
&=(-1)^{c_1(g)c_2(h)}\tau _1 (g,h) \tau _2 (g,h)u_{gh}(\xi \hotimes \eta)}
for any $\xi \in (E_1)_{s(h)}$ and $\eta \in (E_2)_{s(h)}$.
\end{proof}

\begin{exmp}\label{exmp:Morita}
For a $(\phi,c,\tau)$-twisted unitary representation $(\mathscr{V}, \pi )$ of $\G$, we write $(\bdot{\mathscr{V}}, \bdot \pi )$ for the $(\phi,c,\tau + \epsilon (c,c))$-twisted unitary representation given by $\bdot{\mathscr{V}}=\mathscr{V}$ as Hilbert spaces and $\bdot \pi (g):= \gamma ^{c(g)} \circ \pi (g)$. Then, $\varphi:=\id : \Kop (\mathscr{V}) \to \Kop (\bdot{\mathscr{V}})$ satisfies (\ref{form:superhom}). In this case, we have $\mathscr{V}^* \hotimes _{\Kop (\mathscr{V})} \bdot{\mathscr{V}} \cong \real$ as $\phi$-twisted $\real$-$\real$ bimodules and $\bdot{\mathscr{V}} \hotimes _\real \mathscr{V}^* \cong \Kop (\mathscr{V})$ as $\phi$-twisted $\Kop (\mathscr{V})$-$\Kop (\mathscr{V})$ bimodules. 
\end{exmp}

In the same way as Corollary 6.22 of \cite{MR1671260}, we obtain the Kasparov stabilization theorem for $(\phi,c,\tau)$-twisted $\G$-equivariant Hilbert $A$-modules when the groupoid $\G$ is proper. That is, for any $\sigma$-unital $\phi$-twisted $\G$-$\Cst$-algebra $A$ and countably generated $(\phi,c,\tau)$-twisted $\G$-equivariant Hilbert $A$-modules $E$, there is a $\G$-equivariant unitary equivalence 
\maa{ \lu \phi \Hilb _{\G,A}^{c,\tau} \oplus E \cong  \lu \phi \Hilb _{\G,A}^{c,\tau} \label{form:Kas}}
where $ \lu \phi \Hilb _{\G,A}^{c,\tau}$ denotes the $(\phi,c,\tau)$-twisted Hilbert $\G$-module $ \lu \phi \Hilb _{\G}^{c,\tau} \hotimes A$. In particular, $\Hilb _{\G ,A} \hotimes \mathscr{V} \cong \mathscr{H}_{\G,A}^{c,\tau}$ for any $(\phi,c,\tau)$-twisted representation $\mathscr{V}$ of $\G$.

\begin{defn}
Let $A$ and $B$ be $\Zt$-graded $\G$-$\Cst$-algebras. A \emph{$(\phi,c,\tau)$-twisted $\G$-equivariant Kasparov $A$-$B$ bimodule} is a triple $(E,\varphi,F)$ where
\begin{itemize}
\item a countably generated $(\phi,c,\tau)$-twisted $\G$-equivariant Hilbert $B$-module $E$,
\item a $\Zt$-graded $\ast$-homomorphism $\varphi:A \to \Lop (E)$ satisfying (\ref{form:superhom}),
\item an odd self-adjoint operator $F \in \Lop (E)$ such that $\varphi (a)F$ is $\G$-continuous, $[\varphi(a),F], \varphi (a)(F^2-1) \in \Kop (E)$ and $r^*\varphi (a)((-1)^{c(g)}\alpha (s^*F) -r^*F) \in \Kop (r^*E)$.
\end{itemize}
\end{defn}

We say that two $(\phi , c,\tau)$-twisted $\G$-equivariant Kasparov $A$-$B$ bimodules $(E_i,\varphi _i,F_i)$ are homotopic if there is a $(\phi , c,\tau)$-twisted $\G$-equivariant Kasparov $A$-$B[0,1]$ bimodule $(\tilde{E},\tilde{\varphi},\tilde{F})$ such that each $\ev_i ^*(\tilde{E},\tilde{\varphi},\tilde{F})$ is unitary equivalent to $(E_i,\varphi _i,F_i)$ for $i=0,1$.

\begin{defn}
We define $\lu{\phi} \KK ^\G_{c,\tau}(A,B)$ the group of homotopy classes of $(\phi ,c, \tau )$-twisted $\G$-equivariant Kasparov $A$-$B$ bimodules. 
\end{defn}

\begin{exmp}
For $\tau \in \check{H}^2(\G, \Zt)$, let $\G _\real$, $\phi _\real$ and $\tau _\real$ be as in Example \ref{exmp:real}. Then, the twisted equivariant $\KK$-group $\lu{\phi_\real} \KK ^{\G_\real}_{\tau_\real}(A,B)$ is isomorphic to the twisted equivariant Real $\KK$-group $\KKR ^{\G}(A,B)$ by definition. In the same way, the group $\lu {\phi_\real} \KK _{\tau_{\mathbb{H}}} ^{\G_\real} (A,B)$ is isomorphic to the twisted Quaternionic $\KK$-group $\KKQ ^{\G}(A,B)$. 
\end{exmp}

\begin{prp}\label{prp:detwist}
Let $A$ and $B$ be $\phi$-twisted $\Zt$-graded $\G$-$\Cst$-algebras. Set $\overline{\tau} := \tau +\epsilon (c,c)$. Then, there is a natural isomorphism
\[\lu{\phi}\KK^\G_{c,\tau}(A,B) \cong \lu \phi \KK^\G (A,B \hotimes \lu \phi \Kop _\G ^{c,\overline{\tau}}).\]
In particular, when $A$ is a Real $\G$-$\Cst$-algebra, 
\[
\lu{\phi}\KK^\G_{c,\tau}(A,B) \cong \KKR^\G (A,(B \hotimes \lu \phi \Kop _\G ^{c,\overline{\tau}})_\real).
\]
\end{prp}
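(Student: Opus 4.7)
The plan is to realize the isomorphism as a Morita equivalence induced from Example~\ref{exmp:Morita}. Set $\mathscr{V}:=\lu{\phi}\Hilb_{\G}^{c,\overline{\tau}}$, a $(\phi,c,\overline{\tau})$-twisted unitary representation of $\G$, so that $\Kop(\mathscr{V})=\lu{\phi}\Kop_{\G}^{c,\overline{\tau}}$ as an ordinary $\phi$-twisted $\G$-$\Cst$-algebra. Because $\epsilon(c,c)$ is $\{\pm 1\}$-valued, its square is the trivial $\T_{\phi}$-valued cocycle; consequently the representation $\bdot{\mathscr{V}}$ of Example~\ref{exmp:Morita} is $(\phi,c,\overline{\tau}+\epsilon(c,c))=(\phi,c,\tau)$-twisted, while the dual $\mathscr{V}^{*}$ is $(\phi,c,-\overline{\tau})$-twisted (dualization preserves the grading cocycle $c$ and negates the $\tau$-part of the twist).

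Define the forward map by
\[
\Phi(E,\varphi,F) := (E\hotimes \mathscr{V}^{*},\; \varphi\hotimes \id,\; F\hotimes \id).
\]
By Lemma~\ref{lem:tensor}, the combined twist on $E\hotimes \mathscr{V}^{*}$ is $(\phi,c+c,\tau+(-\overline{\tau})+\epsilon(c,c))=(\phi,0,0)$, so $\Phi(E,\varphi,F)$ is an ordinary $\phi$-twisted $\G$-equivariant Kasparov $A$-$(B\hotimes \lu{\phi}\Kop_{\G}^{c,\overline{\tau}})$-bimodule; the Kasparov conditions on $F\hotimes \id$ are inherited from those on $F$, since tensoring with $\id_{\mathscr{V}^{*}}$ preserves local compactness and the equivariance estimates. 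Symmetrically, define the inverse
\[
\Psi(E',\varphi',F') := (E'\hotimes_{\Kop(\mathscr{V})}\bdot{\mathscr{V}},\; \varphi'\hotimes \id,\; F'\hotimes \id),
\]
which yields a $(\phi,c,\tau)$-twisted Kasparov bimodule by the dual twist computation. Both assignments are functorial in $B$, respect evaluation at the endpoints of $B[0,1]$, and hence descend to well-defined natural homomorphisms between the $\KK$-groups.

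That $\Phi$ and $\Psi$ are mutually inverse reduces, via associativity of internal and external tensor products, to the bimodule isomorphisms $\mathscr{V}^{*}\hotimes_{\Kop(\mathscr{V})}\bdot{\mathscr{V}}\cong \real$ and $\bdot{\mathscr{V}}\hotimes_{\real}\mathscr{V}^{*}\cong \Kop(\mathscr{V})$ supplied by Example~\ref{exmp:Morita}. For the second statement, combine the first isomorphism with the identification $\lu{\phi}\KK^{\G}(A,C)\cong \KKR^{\G}(A,C_{\real})$ valid for any Real $\G$-$\Cst$-algebra $A$, which follows from Remark~\ref{remk:cpx} by passing to the underlying Real structure of the complex $\phi$-twisted Kasparov bimodule data.

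The main obstacle is the careful bookkeeping of twists under dualization and tensoring---in particular verifying that the Morita-bimodule isomorphisms of Example~\ref{exmp:Morita} lift to isomorphisms of the full Kasparov bimodule data (including the representation $\varphi$ and the operator $F$). Once these are in hand, the remaining homotopy-invariance checks are routine consequences of the functoriality of the tensor constructions.
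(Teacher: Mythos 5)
Your proposal is correct and follows essentially the same route as the paper: both realize the isomorphism by tensoring with the dual representation in one direction and with $\bdot{\mathscr{V}}$ over $\Kop(\mathscr{V})$ in the other, invoking Lemma~\ref{lem:tensor} for the twist bookkeeping and Example~\ref{exmp:Morita} for the Morita bimodule identities, and deducing the Real case from Remark~\ref{remk:cpx}. Your explicit verification that the combined twists cancel is if anything slightly more careful than the paper's terse statement of the same correspondence.
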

It reduces the twisted equivariant $\KK$-group to the existing $\KK$-group since $\lu \phi \KK^\G (A,B)$ is isomorphic to $\KKR ^{\G \ltimes \G_\phi^0}(p_\phi^*A, p_\phi^*B)$ in the sense of Moutuou~\cite{MR3177819}. Here, the groupoid $\G \ltimes \G_\phi^0$ is regarded as a Real groupoid by the $\Zt$-action from the right. As a consequence, we obtain the $8$-fold Bott periodicity for twisted equivariant $\KK$-theory.
\begin{proof}
By Lemma \ref{lem:tensor} and Example \ref{exmp:Morita}, we have a one-to-one correspondence between $(\phi,c,\tau)$-twisted Kasparov bimodules for $A,B$ and $\phi$-twisted Kasparov bimodules for $A,B \hotimes \lu \phi  \Kop _\G^{c,\overline{\tau}}$ given by
\[
\renewcommand{\arraystretch}{1.3}
\begin{array}{ccc}
(E, \varphi , F) &\mapsto& (E \hotimes (\lu \phi \Hilb _{\G}^{c,\tau})^*, \varphi \hotimes 1, F \hotimes 1),\\
(E' \hotimes_{\lu \phi \Kop _\G^{c,\overline{\tau}}} \bdot{\Hilb} _{\G}^{c,\overline{\tau}} , \varphi' \hotimes 1, F' \hotimes 1)& \mapsfrom&  (E',\varphi',F').\\
\end{array}
\renewcommand{\arraystretch}{1}
\]
Note that the operators $F \hotimes 1$ and $F' \hotimes 1 $ satisfy the relations on group actions because $\Lop (E \hotimes (\lu \phi \Hilb _{\G}^{c,\tau})^*) \cong \Lop (\bdot{E})$ and $\Lop (E' \hotimes_{\lu \phi \Kop _\G^{c,\overline{\tau}}} \lu \phi \bdot{\Hilb} _{\G}^{c,\overline{\tau}}) \cong \Kop (\bdot{E'})$.

When $c=0$, $\tau=0$ and $A$ is a Real $\G$-$\Cst$-algebra, $\lu \phi \KK ^\G (A,B) \cong \lu \phi \KK^{\G _\phi}(p_\phi ^*A, p_\phi ^*B)$ is isomorphic to $\lu \phi \KK^{\G \times \Zt}(A,p_\phi ^*B)$, which is isomorphic to $\KKR ^\G(A,B_\real)$ by Remark \ref{remk:cpx}.
\end{proof}

At the end of this section, we briefly summarize basic properties of the Kasparov product. Let $(E_1,\varphi _1,F_1)$ be a $(\phi,\tau _1,c _1)$-twisted $\G$-equivariant Kasparov $A_1$-$B_1 \hotimes D$ bimodule and let $(E_2,\varphi _2,F_2)$ be a $(\phi,\tau _2,c _2)$-twisted $\G$-equivariant Kasparov $A_2 \hotimes D$-$B_2$ bimodule. Set $\tau = \tau _1 + \tau _2 + \epsilon (c_1,c_2)$ and $c=c_1+c_2$. We say that a $(\phi, c,\tau)$-twisted $\G$-equivariant Kasparov $A_1 \hotimes A_2$-$B_1 \hotimes B_2$ bimodule $(E ,\varphi , F)$ is a Kasparov product $(E_1,\varphi _1,F_1) \hotimes _B (E_2,\varphi _2,F_2)$ if $E=E_1 \hotimes _BE_2$, $\varphi (a_1 \hotimes a_2)= \varphi _1(a_1 ) \hotimes \varphi _2 (a_2)$ and $F$ satisfies
\maa{
\renewcommand{\arraystretch}{2}
\begin{array}{l}
\text{\parbox{.85\textwidth}{\hspace{-1.5ex}$\circ$ the operator $F$ is an $F_2$-connection, that is, $FT_\xi -(-1)^{\partial \xi}T_\xi F_2$ is in $\Kop (E_2,E)$ and $T_\xi^*F -(-1)^{\partial \xi}F_2T_\xi^*$ is in $\Kop (E,E_2)$ for any $\xi \in E_1$ (where $T_\xi(\eta):=\xi \hotimes \eta$).}}\\
\text{\parbox{.85\textwidth}{\hspace{-1.5ex}$\circ$ the operator $\varphi (a)[F,F_1 \hotimes 1]\varphi (a)^*$ is positive modulo compact for any $a \in A_1 \hotimes A_2$.}}
\end{array}
\renewcommand{\arraystretch}{1}
\label{form:KK}
}
It is straightforward to check that the Kasparov product always exists and induces the well-defined product of $\KK$-groups
\[
\lu \phi \KK ^\G_{c_1, \tau_1}(A_1,B_1 \hotimes D) \otimes \lu \phi \KK^\G_{c_2, \tau _2}(A_2 \hotimes D,B_2) \to \lu \phi \KK ^\G _{c,\tau}(A_1 \hotimes A_2,B_1 \hotimes B_2)
\]
which is associative by the same proof as Theorem 12 of \cite{MR743845} (for Real $\KK$-theory). 

As a corollary, we obtain an isomorphism between $\lu \phi \KK _{c,\tau }^\G (A,B)$ and the group $\lu \phi \KK _{c,\tau}^\G (A,B)_{\mathrm{oh}}$ of ``operator homotopy'' equivalence classes of $(\phi,c,\tau)$-twisted $\G$-equivariant Kasparov $A$-$B$ bimodules (cf.\ Subsection 18.5 of \cite{MR1656031}). Here, two $(\phi,c,\tau)$-twisted $\G$-equivariant Kasparov $A$-$B$ bimodules $(E_i,\varphi _i,F_i)$ are \emph{operator homotopic} if there are degenerate $(\phi,c,\tau)$-twisted $\G$-equivariant Kasparov $A$-$B$ bimodules $(E'_i,\varphi _i, F_i')$, an even unitary operator $U: E_1 \oplus E_1' \to E_2 \oplus E_2'$ and a norm continuous path $\tilde{F}_t$ of operators on $E_2 \oplus E_2'$ such that $(E_2 \oplus E_2', \varphi _2 \oplus \varphi _2', \tilde{F}_t)$ are $(\phi,c,\tau)$-twisted $\G$-equivariant Kasparov $A$-$B$ bimodules and $\tilde{F}_0=U^* (F_1 \oplus F_1')U$, $\tilde{F}_1=F_2 \oplus F_2'$.

\section{Twisted equivariant $\K$-theory}\label{section:4}
In the same fashion as the untwisted case, it is natural to define the twisted equivariant $\K$-group as a special case of the twisted equivariant $\KK$-groups introduced in Section \ref{section:3}. In this section, we study basic properties of twisted equivariant $\K$-groups. In particular, we introduce a generalization of the twisted crossed product for twists in the sense of \cite{MR3119923} and prove the generalization of the Green-Julg theorem.

Hereafter, we assume that $\G$ is a proper groupoid.

\begin{defn}\label{def:twK}
Let $A$ be a $\phi$-twisted $\Zt$-graded $\G$-$\Cst$-algebra. The twisted equivariant $\K$-group $\lu{\phi} \K _{0,c,\tau}^\G(A)$ is defined as the twisted equivariant $\KK$-group $\lu{\phi} \KK ^\G_{c,\tau}(\real , A)$.
\end{defn}

We remark that every element in $\lu{\phi} \KK ^G_{c,\tau}(\real , A)$ is represented by a Kasparov bimodule of the form $[\lu \phi \Hilb _{\G,A}^{c,\tau},1,F]$. Actually, a Kasparov bimodule $(E,\varphi,F)$ is equivalent to $(pE \oplus  \lu \phi \Hilb _{\G ,A}^{c,\tau},p,pFp \oplus F_0)$ and $pE \oplus \lu \phi \Hilb _{\G ,A}^{c,\tau} \cong \lu \phi \Hilb _{\G ,A}^{c,\tau}$ by (\ref{form:Kas}), where $p :=\varphi (1)$ and $(\lu \phi \Hilb _{\G,A}^{c,\tau}, \varphi _0, F_0)$ is a degenerate Kasparov bimodule such that $\varphi _0$ is unital.

This definition has another reasonable presentation, which is a generalization of $\K$-theory for $\Zt$-graded $\Cst$-algebras introduced in \citelist{\cite{MR2058474}\cite{MR1775323}\cite{MR1487204}}. Let $\Salg$ be the $\Cst$-algebra $C_0(\real)$ together with the $\Zt$-grading $\gamma (f)(x)=f(-x)$ and the Real structure $\overline{f}(x)=\overline{f(x)}$. The groupoid $\G$ acts on $\mathcal{S}$ by $\alpha _g(f)=\gamma ^{c(g)} (\lu{\phi (g)}f)$.

\begin{prp}\label{prp:twK}
Let $A$ be a $\phi$-twisted $\Zt$-graded $\G$-$\Cst$-algebra. Then, the group $\lu \phi \K _{0,c,\tau}^\G (A)$ is isomorphic to the set of homotopy classes of $\G$-equivariant $\ast$-homomorphisms $[\Salg , A \hotimes  \lu \phi \Kop_{\G}^{c,\tau}]^\G$.
\end{prp}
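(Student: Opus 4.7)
The plan is to exhibit the isomorphism via the correspondence between $\ast$-homomorphisms out of $\Salg$ and odd self-adjoint regular unbounded operators with compact resolvent, followed by the bounded transform into the Kasparov picture.

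First I reduce to a standard representative. By the remark following Definition~\ref{def:twK} and the stabilization~\eqref{form:Kas}, every class in $\lu\phi\K^\G_{0,c,\tau}(A)=\lu\phi\KK^\G_{c,\tau}(\real,A)$ admits a representative $(E,1,F)$ with $E:=\lu\phi\Hilb^{c,\tau}_{\G,A}$, unital representation, and $F\in\Lop(E)$ odd self-adjoint satisfying $F^2-1\in\Kop(E)$ together with $u_gFu_g^{-1}\equiv(-1)^{c(g)}F$ modulo $\Kop(E)$. Since $\G$ is proper, a standard averaging argument allows us to replace $F$ within its operator-homotopy class by one for which the twisted equivariance is strict. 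Under the canonical identification $\Kop(E)\cong A\hotimes\lu\phi\Kop^{c,\tau}_\G$ coming from Example~\ref{exmp:rep}, the $\G$-action on the right-hand side is the $\phi$-twisted conjugation $K\mapsto u_gKu_g^{-1}$.

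Given such $(E,1,F)$, set $D:=F(1-F^2)^{-1/2}$. In the unbounded picture this is an odd self-adjoint regular operator on $E$, and its resolvent $(1+D^2)^{-1}=1-F^2$ lies in $\Kop(E)$. Accordingly $\phi_F(f):=f(D)$ defines a graded $\ast$-homomorphism from $\Salg$ to $\Kop(E)\cong A\hotimes\lu\phi\Kop^{c,\tau}_\G$. The strict twisted equivariance of $F$ transports through the bounded transform to $u_gDu_g^{-1}=(-1)^{c(g)}D$; combined with the $\phi(g)$-linearity of $u_g$ this yields
\[
u_g\phi_F(f)u_g^{-1}=(\lu{\phi(g)}f)\bigl((-1)^{c(g)}D\bigr)=\bigl(\gamma^{c(g)}\lu{\phi(g)}f\bigr)(D)=\phi_F\bigl(\alpha_g^{\Salg}(f)\bigr),
\]
i.e.\ the $\G$-equivariance of $\phi_F$.

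Conversely, a $\G$-equivariant graded $\ast$-homomorphism $\phi:\Salg\to A\hotimes\lu\phi\Kop^{c,\tau}_\G\cong\Kop(E)$ corresponds by the functional calculus of regular operators to a unique odd self-adjoint regular operator $D_\phi$ on $E$ with compact resolvent satisfying $u_gD_\phi u_g^{-1}=(-1)^{c(g)}D_\phi$; its bounded transform $F_\phi:=D_\phi(1+D_\phi^2)^{-1/2}$ is odd self-adjoint with $F_\phi^2-1=-(1+D_\phi^2)^{-1}\in\Kop(E)$ and the required twisted equivariance. The two bounded transforms compose to the identity via the algebraic identity $1+D^2=(1-F^2)^{-1}$ when $D=F(1-F^2)^{-1/2}$, so $[E,1,F]\mapsto\phi_F$ and $\phi\mapsto[E,1,F_\phi]$ are mutually inverse.

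The principal technical point is running the functional-calculus bijection and the bounded transform compatibly with the $(\phi,c,\tau)$-twisted $\G$-action on $E$, keeping track of the $(-1)^{c(g)}$ sign produced when conjugating an odd operator by a $c(g)$-graded unitary, and verifying the averaging step that upgrades equivariance-modulo-compacts to strict equivariance. Compatibility with the homotopy relation in both directions is then automatic: a Kasparov $\real$-$A[0,1]$ bimodule corresponds under the above to a $\ast$-homomorphism $\Salg\to A[0,1]\hotimes\lu\phi\Kop^{c,\tau}_\G$, i.e.\ to a homotopy of $\ast$-homomorphisms.
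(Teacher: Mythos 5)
Your proposal follows essentially the same route as the paper: both are the Trout functional-calculus correspondence between $\ast$-homomorphisms out of $\Salg$ and (un)bounded odd self-adjoint operators, combined with the properness averaging to promote equivariance-modulo-compacts to strict equivariance, and both cite (implicitly or explicitly) Theorem 4.7 of Trout's paper for the isomorphism. However, two technical points that the paper handles explicitly are glossed over in your argument, and they are not merely cosmetic.

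First, the assignment $\phi \mapsto D_\phi$: a $\G$-equivariant graded $\ast$-homomorphism $\phi : \Salg \to \Kop(E)$ need not be non-degenerate, so $D_\phi$ is not a regular operator on all of $E$; it only lives on the essential submodule $\overline{\phi(\Salg)E}$. This is exactly why the paper sets $E := \overline{\varphi(C_c(\real))\,\lu\phi\Hilb^{c,\tau}_{\G,A}}$ rather than using $\lu\phi\Hilb^{c,\tau}_{\G,A}$ itself. Writing ``$D_\phi$ on $E$ with compact resolvent'' is therefore imprecise, and your claimed algebraic identity $1+D_\phi^2 = (1-F_\phi^2)^{-1}$ does not hold on the orthogonal complement of that submodule. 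Second, the formula $D := F(1-F^2)^{-1/2}$ for the other direction presupposes $\|F\| \le 1$; a preliminary normalization (replacing $F$ by $F(1+F^2)^{-1/2}$ or by a strictly contractive operator-homotopic $F$) is needed before this makes sense. Once both are taken into account, the maps are not literally mutually inverse but inverse up to a degenerate direct summand (respectively up to restriction to a submodule), which is an identification at the level of homotopy classes only. The last sentence of your proof acknowledges homotopy compatibility but does not address this ``inverse only up to degenerates'' issue, which is the substantive content of the cited Theorem 4.7. So the approach is right, but as written the bijectivity argument has two genuine loose ends.
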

For a $\G$-$\Cst$-algebra $A$, let $A^+$ denote the unitization $A + C_b(\G ^0) \subset M(A \oplus \real)$ of $A$. Then, there is a one-to-one correspondence between $\G$-equivariant $\ast$-homomorphisms from $\Salg $ to $A \hotimes \lu \phi \Kop_{\G}^{c,\tau}$ and unitaries $(A \hotimes \lu \phi \Kop_{\G}^{c,\tau})^+$ such that $u-1 \in A \hotimes \lu \phi \Kop _\G^{c,\tau}$, $\gamma (u)=u^*$ and $\alpha _g(u)=\gamma ^{c(g)+\phi (g)}(u)$. Actually, a $\ast$-homomorphism $\varphi$ corresponds to a unitary $1+\varphi (-\exp (\pi ix(1+x^2)^{-1})-1)$.
\begin{proof}
The proof is given in the same way as in Theorem 4.7 of \cite{MR1775323}. By the functional calculus given in Theorem 3.2 of \cite{MR1775323}, we obtain a $\Zt$-graded submodule $E:=\overline{\varphi (C_c(\real)) \lu{\phi}\Hilb _{\G,A}^{c,\tau}}$ of $ \lu \phi \Hilb _{\G,A}^{c,\tau}$ and a regular odd self-adjoint operator $D$ on $E$ given by $D(\varphi(f) \xi) := \varphi (xf) \xi$. Consequently we obtain a Kasparov $\real$-$(A \hotimes \lu \phi \Kop _\G ^{c,\tau })$ bimodule $[E,1 ,D(1+D^2)^{-1/2}]$. This correspondence gives a group homomorphism $[\Salg ,A \hotimes \lu \phi \Kop _\G ^{c,\tau }]^\G \to \lu \phi \KK ^\G_{c,\tau}(\real, A)$. 

In the same way as the proof of Theorem 4.7 of \cite{MR1775323}, we can check that it is an isomorphism. 
We remark that for any Kasparov bimodule $(E,\varphi,F)$, we can replace $F$ with another Fredholm operator $F'=\int _{g \in \G ^x}\alpha_g (F_{s(g)})d\lambda ^x(g)$ commuting with the $\G$-action since $\G$ is proper.
\end{proof}

Now we generalize the Green-Julg theorem \cite{MR625361} for twisted equivariant $\K$-theory.  First, we start with the definition of the crossed product for groupoid actions with general twists. Here, we say that an extension of a groupoid $\G$ by a group $N$ is a principal $N$-bundle $\G ^\sigma$ over $\G ^1$ with a groupoid structure which is compatible with the projection $\pi : \G ^\sigma \to \G$ and the identification $\pi ^{-1}(1_x) =1_x \cdot N \cong N$ for each $x \in \G ^0$. Denote $n_x := 1_x \cdot n$ for $n \in N$.
\begin{defn}
Let $\phi : \G \to \Zt$ be a groupoid homomorphism and let $A$ be a $\G ^0$-$\Cst$-algebra. We say that a $(\phi,\sigma)$-twisted $\G$-action is a pair $(\G^\sigma, \alpha )$ where $\G ^\sigma$ is a groupoid extension of $\G$ by a locally compact subgroup $N$ of $\mathcal{U}M(A)$ and $\alpha$ is a $\phi$-twisted action of $\G^\sigma$ on $A$ such that $\alpha _{u_x} =\Ad u_x$ for any $u \in N$.  
\end{defn}
In the same way as in Section \ref{section:3}, we get a family of $\phi$-linear $\G^0$-$\ast$-automorphisms $\alpha =\{ \alpha ^\mu : s^*A|_{U^\mu} \to r^*A|_{U^\mu} \}_{\mu \in I^1}$ and a family $\sigma =\{ \sigma ^\nu : V^\nu \to N \}_{\nu \in I^2}$ such that 
\ma{
\alpha _g^{\tilde{\varepsilon} _0 \nu} \circ \alpha _h^{\tilde{\varepsilon} _2 \nu} &=\Ad (\sigma ^{\nu}(g,h)_{r(g)}) \circ \alpha _{gh}^{\tilde{\varepsilon} _1 \nu}, \\
\sigma ^{\tilde{\varepsilon} _1\kappa } (gh,k)\sigma ^{\tilde{\varepsilon} _3 \kappa }(g,h)&=\alpha _g^{\tilde{\varepsilon} _2\tilde{\varepsilon} _1\kappa}(\sigma ^{\tilde{\varepsilon} _0 \kappa}(h,k))\sigma ^{\tilde{\varepsilon} _2 \kappa} (g,hk), \\
\sigma^{\nu} (g,1_{r(g)})&=\sigma^{\nu} (1_{s(g)},g)=1,\\
\alpha _{1_x}^\mu &=\id_{A_x}, }
for each $\nu \in I^2$, $\kappa \in I^3$, $x \in \G ^0$ and $g \in \G ^1$. We say that $\sigma$ is a $2$-cocycle associated to $\alpha$.

\begin{defn}
Let $c:\G \to \Zt$ be a groupoid homomorphism. For a $(\phi,\sigma)$-twisted $\G$-action on a $\Cst$-algebra $A$, the \emph{$(\phi,c,\sigma)$-twisted crossed product} $\G \ltimes _{c,\sigma }^\phi A$ is the $\Zt$-graded $\Cst$-algebra $\G \ltimes _{c,\sigma}A_\real$ together with the Real structure induced from the complex conjugation on $A_\real$. 
\end{defn}

Here, for a $\sigma$-twisted $\G$-$\Cst$-algebra, the full graded twisted crossed product $\G \ltimes _{c,\sigma}A$ (see Section 2 of ~\cite{MR1857079}) is given by the completion of the subalgebra $C_c(\G, r^*A, \sigma)$ of $C_b(\G^\sigma, r^*A)$ consisting of functions $f$ with compact support on $\G$ such that $f(\tilde{g}u)=f(\tilde{g}) \alpha _{\tilde{g}}(u^*) \in A_{r(\tilde{g})}$
with operations
\ma{
f_1 \ast f_2 (\tilde{g})&:=\int _{\G^{r(\tilde{g})}} (-1)^{c(\tilde{g})\cdot |f_1(\tilde{h})|} f_1(\tilde{h})\alpha _{\tilde{h}}^A(f_2(\tilde{h}^{-1}\tilde{g})) d\lambda ^{r(\tilde{g})}(h),\\
f^*(\tilde{g})&:=(-1)^{c(\tilde{g}) \cdot |f(\tilde{g})|} \alpha _{\tilde{g}}^A (f(\tilde{g}^{-1})^*), \\
\alpha _{\tilde{h}}(f)(\tilde{g})&:=\alpha _{\tilde{h}}^A(f(\tilde{h}^{-1}\tilde{g})), \\
\gamma (f)(\tilde{g})&:=(-1)^{c(\tilde{g})}\gamma _A(f (\tilde{g})),
}
for $\tilde{g}, \tilde{h} \in \G^\sigma$ by the maximal $\Cst$-norm smaller than
\[
\ssbk{f}_1:=\sup _{x \in \G^0} \max \left\{ \int _{\G ^x}\ssbk{f(g)}d\lambda ^x (g), \ \int _{\G _x}\ssbk{f(g)}d\lambda _x (g) \right\}
\]
(note that $\ssbk{f(\tilde{g})}$ is a function on $\G$). Note that our definition is slightly different from Definition 5.1 of \cite{mathph14067366}.

In Section 3.6 of \cite{MR2074181}, the reduced crossed product of a groupoid action is defined as the quotient of the full twisted crossed product with respect to the family of kernels of (pointwise) regular representations. The difference between full and reduced crossed products causes no problem for us because they always coincide when $\G$ is proper. 

For a $(\phi , c)$-twisted representation $u$ of $\G^\sigma$ and a $\ast$-representation $\pi$ of $A$ on a Hilbert bundle $\Hilb$ over $\G ^0$ which are twisted covariant (that is, it satisfies $u_g \pi (a) u_g^*=(-1)^{c(g)\cdot |a|}\pi  (\alpha _g (a))$ for any $g \in \G^1$ and $a \in A_{s(g)}$), we have a unique representation $\tilde{\pi}$ of the full crossed product $\G \ltimes ^\phi _{c,\tau} A$ on $p_\phi^*\Hilb \cong \Hilb _\real$. 

\begin{exmp}
When $\phi$ is trivial, then the $(\phi,c,\sigma)$-twisted crossed product is isomorphic to $(\G \ltimes _{c,\sigma}A)_\real$, whose Real $\K$-group is isomorphic to the $\K$-group of $\G \ltimes _{c,\sigma} A$. 
\end{exmp}

\begin{exmp}\label{exmp:triv}
When $\G$ acts on the Real $\Cst$-algebra $\real$ trivially, the crossed product $\G \ltimes _{c,\tau}^\phi \real $ is called the \emph{$(\phi,c,\tau)$-twisted groupoid $\Cst$-algebra} and denoted by $\lu \phi C^*_{c,\tau}\G$. For example, the twisted groupoid $\Cst$-algebra of $\G _\real$ with respect to a real twist $(\phi _\real,c, \tau _\real )$ is isomorphic to $\Mop _2 (C^*_{c,\tau}(\G;\real))$ where $C^*_{c,\tau}(\G;\real)$ is the twisted graded groupoid $\Cst$-algebra with coefficient in $\real$. For a general $\Zt$-valued $1$-cocycles $c$ on $\G$, we define the \emph{stable twisted groupoid $\Cst$-algebra} as $\lu \phi \mathring{C}^*_{c,\tau}\G := \lu \phi C^*_{c,\tau}\G_c$. When $c$ is represented by a groupoid homomorphism, it is isomorphic to $\hat{\Mop }_{1,1} (\lu \phi C^*_{c,\tau}\G)$. For example, when $(\mathscr{A}, \tau )$ is a CT-type as in Example \ref{exmp:CT}, then $\lu \phi \mathring{C}^*_{c,\tau}\mathscr{A}$ is isomorphic to a Clifford algebra as indicated in Table \ref{table:CT} below. 
\end{exmp}

\begin{table}[h]
\scalebox{0.92}[0.92]{
\begin{tabular}{|r||c|c||c|c|c|c|c|c|c|c|} \hline
$\mathscr{A}$ & $1$ & $\mathscr{P}$ & \multicolumn{2}{c|}{$\mathscr{T}$} & \multicolumn{2}{c|}{$\mathscr{C}$} & \multicolumn{4}{c|}{$\mathscr{G}$}\\ \hline
$C^2$& \diaghead{$\Cliff _{4}^2$}{}{} & \diaghead{$\Cliff _{4,2}^2$}{}{} & \diaghead{$\Cliff _{4,2}^2$}{}{} & \diaghead{$\Cliff _{4,2}^2$}{}{} & $1$& $-1$ & $1$ & $1$ & $-1$ & $-1$ \\ \cline{1-3}\cline{4-11}
$T^2$& \diaghead{$\Cliff _{4}^2$}{}{} &\diaghead{$\Cliff _{4,2}^2$}{}{}& $1$ & $-1$ & \diaghead{$\Cliff _{4,2}^2$}{}{}  & \diaghead{$\Cliff _{4,2}^2$}{}{} & $1$ & $-1$ & $1$ & $-1$ \\ \hhline{|=#=|=#=|=|=|=|=|=|=|=|}
$\lu \phi \mathring{C}^*_{c,\tau}\sA$& $\Cliff _2$ & $\Cliff _3$ & $\Cl _{2,2}$ &$\Cl _{0,4}$ & $\Cl _{1,3}$ & $\Cl_{3,1}$ & $\Cl_{2,3}$ & $\Cl_{1,4}$ & $\Cl_{3,2}$ & $\Cl_{4,1}$ \\ \hline 
$\lu \phi \K _{0,c,\tau}^\sA$ & $\K _{0}$ & $\K _{1}$ & $\KR _0$ & $\KR _{4}$ & $\KR _{2}$ & $\KR _{6}$ & $\KR _{1}$ & $\KR _{3}$ & $\KR_{7}$ & $\KR _{5}$ \\ \hline
Cartan&A & AI\hspace{-.1em}I\hspace{-.1em}I  & AI & AI\hspace{-.1em}I  & D & C & BDI & DI\hspace{-.1em}I\hspace{-.1em}I  & CI  & CI\hspace{-.1em}I \\ \hline
\end{tabular}
}
\caption{The 10-fold way and Clifford algebras}
\label{table:CT}
\end{table}

\begin{exmp}
Let $\G _\real$, $\phi _\real$ and $\tau _\real$ be as in Example \ref{exmp:real}. For a Real $\G$-$\Cst$-algebra $A$ and $c \in \Hom (\G, \Zt)$, the twisted crossed product $\G _{\real} \ltimes_{c,\tau_\real} ^{\phi_\real} A$ is isomorphic to $\Mop _2(\G \ltimes _{c,\tau} A)$. In the same way, the crossed product $\G_\real \ltimes ^{\phi_\real}_{c, \tau_{\mathbb{H}}} A$ is isomorphic to the quaternionic crossed product $\G \ltimes _{c,\tau}^{\mathbb{H}} A:=(\G \ltimes _{c,\tau} A) \otimes_\real \mathbb{H}$. 
\end{exmp}

\begin{exmp}\label{exmp:Cl2}
Let $F_{n,m}$ and $(c_{n,m}^0, \tau _{n,m}^0)$ be as in Example \ref{exmp:Cl}. The real graded group $\Cst$-algebra $C^*_{c_{n,m}^0,\tau_{n,m}^0}(F_{n,m})$ is isomorphic to the complex Clifford algebra $\Mop _2 (\Cliff _{n+m})$. Moreover, let $\G$ be a groupoid, let $(\phi,c,\tau)$ be a twist on $\G$ and let $A$ be a $\phi$-twisted $\G$-$\Cst$-algebra. Set $\G'=\G _{n,m}:=\G \times F_{n,m}$ and 
\[(\phi,c',\tau')=(\phi , c_{n,m}, \tau _{n,m}):= (\phi \times 0, c \times c_{n,m}^0, \tau \times \tau _{n,m}^0).\]
Then, $ \G' \ltimes ^{\phi}_{c', \tau'} A$ is isomorphic to $(\G \ltimes ^\phi _{c,\tau} A) \hotimes \Cl _{n,m}$.
\end{exmp}

\begin{lem}\label{lem:cross}
The crossed product $\G \ltimes _{c,\tau}^\phi A$ is Morita equivalent to $\G \ltimes (A \hotimes \lu{\phi} \Kop _\G^{c,-\tau})_\real$, where  as Real $\Cst$-algebras.
\end{lem}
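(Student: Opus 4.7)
Set $B := A \hotimes \lu{\phi}\Kop_{\G}^{c,-\tau}$, viewed as a $\phi$-twisted $\Zt$-graded $\G$-$\Cst$-algebra. The goal is to exhibit a Real $(\G \ltimes_{c,\tau}^{\phi} A,\, \G \ltimes B_{\real})$-imprimitivity bimodule. The guiding principle is the same as in Proposition~\ref{prp:detwist}: tensoring with a conjugate copy of $\lu{\phi}\Hilb_{\G}^{c,\tau}$ absorbs the twist $(c,\tau)$ into the coefficient algebra, converting a $(\phi,c,\tau)$-twisted covariant pair into an ordinary $\phi$-twisted one. Performing this maneuver at the level of coefficient algebras induces, via convolution, a Morita equivalence of the associated crossed products.

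My first step is to build a $(\phi,c,\tau)$-twisted $\G$-equivariant $(A_{\real}, B_{\real})$-bimodule implementing an equivariant Morita equivalence. Example~\ref{exmp:Morita}, applied to $\mathscr{V} = \lu{\phi}\Hilb_{\G}^{c,\tau}$, shows that this Hilbert $\G$-module realizes a $(\phi,c,\tau)$-twisted $\G$-equivariant Morita equivalence between $\real$ and $\lu{\phi}\Kop_{\G}^{c,-\tau}$, where the sign $-\tau$ is forced by the Baer-sum formula combined with the $\epsilon(c,c)$ adjustment of Lemma~\ref{lem:tensor}. Tensoring over $\real$ with $A$ then produces $\mathcal{E} := (A \hotimes \lu{\phi}\Hilb_{\G}^{c,\tau})_{\real}$, a $(\phi,c,\tau)$-twisted $\G$-equivariant $(A_{\real}, B_{\real})$-imprimitivity bimodule whose $A_\real$-valued and $B_\real$-valued inner products are both full.

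My second step is to promote this equivariant Morita equivalence to a Morita equivalence of the crossed products. I would define a left action of the $(\phi,c,\tau)$-twisted convolution algebra $C_{c}(\G, r^{*}A_{\real}, \tau)$ and a right action of the ordinary convolution algebra $C_{c}(\G, r^{*}B_{\real})$ on $C_{c}(\G, r^{*}\mathcal{E})$, together with $\Cst$-algebra valued inner products induced by the bimodule inner products on $\mathcal{E}$. The key identity is that the composition law $u_{g}u_{h} = \tau(g,h)u_{gh}$ for the twisted $\G$-action on $\mathcal{E}$ produces on the left a $(\phi,c,\tau)$-twisted convolution on $A_{\real}$, while the compensating dual twist encoded in $\lu{\phi}\Kop_{\G}^{c,-\tau}$ cancels the cocycle from the right, yielding the ordinary convolution on $B_{\real}$. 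Completing in the operator norms gives the desired Real imprimitivity bimodule; compatibility with the Real structures on both sides reduces to Remark~\ref{remk:cpx}.

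The main obstacle I anticipate is sign and cocycle bookkeeping. One must reconcile (i) the sign factor $(-1)^{c(\tilde g)\cdot|\cdot|}$ appearing throughout the twisted convolution, (ii) the cocycle $\epsilon(c,c)$ that arises whenever two $c$-graded twisted modules are tensored via Lemma~\ref{lem:tensor}, and (iii) the complex conjugation on the underlying real algebra that defines the Real structure in Remark~\ref{remk:cpx}. Once these three pieces are aligned at the level of local trivializations, the verification of the Rieffel axioms propagates fiberwise from Example~\ref{exmp:Morita} through the groupoid integration.
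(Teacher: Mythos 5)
Your overall plan — exhibit a Real imprimitivity bimodule by tensoring with the conjugate regular module, so that the twist on one side is absorbed into the coefficient algebra on the other — is morally the same as the paper's, since an imprimitivity bimodule and its linking algebra are two views of the same object. However, there are two concrete problems with the proposal as written.

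First, a sign/module error in Step 1. You apply Example~\ref{exmp:Morita} to $\mathscr{V}=\lu{\phi}\Hilb_{\G}^{c,\tau}$ and conclude a Morita equivalence between $\real$ and $\lu{\phi}\Kop_{\G}^{c,-\tau}$. But by definition (Example~\ref{exmp:rep}) $\lu{\phi}\Kop_{\G}^{c,\tau}=\Kop(\lu{\phi}\Hilb_{\G}^{c,\tau})$, so $\lu{\phi}\Hilb_{\G}^{c,\tau}$ is a module over $\lu{\phi}\Kop_{\G}^{c,\tau}$, not over $\lu{\phi}\Kop_{\G}^{c,-\tau}$. What you actually want is $\mathscr{V}=\lu{\phi}\Hilb_{\G}^{c,-\tau}$, so that $\Kop(\mathscr{V})=\lu{\phi}\Kop_{\G}^{c,-\tau}$, and then the $\real$--$\Kop(\mathscr{V})$ bimodule $\mathscr{V}^*$ carries twist $(\phi,c,\tau)$ (its sign flips in the dual, as you can check from Lemma~\ref{lem:tensor} applied to $\mathscr{V}^*\hotimes_{\Kop(\mathscr{V})}\bdot{\mathscr{V}}\cong\real$). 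Writing $\mathcal{E}=(A\hotimes\lu{\phi}\Hilb_{\G}^{c,\tau})_\real$ gives an object with the right twist but with no preferred right $(A\hotimes\lu{\phi}\Kop_{\G}^{c,-\tau})_\real$-module structure; you need $\mathcal{E}=(A\hotimes(\lu{\phi}\Hilb_{\G}^{c,-\tau})^*)_\real$.

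Second, Step 2 leaves the core difficulty unresolved — and it is here that your approach and the paper's diverge in practice. Passing from a fiberwise (twisted-equivariant) Morita equivalence to a Morita equivalence of crossed products is not a citation: the cocycle $\tau$ sits on only one side, so you cannot simply invoke an ``equivariant Morita equivalence implies crossed-product Morita equivalence'' theorem. The paper handles this with two devices you omit. (i) It first reduces to $c=0$ by conjugating by $(1\hotimes e\gamma)^{c(\tilde{g})}$ inside the convolution algebra, which eliminates all the $(-1)^{c(\tilde g)\cdot|\cdot|}$ sign bookkeeping before any Rieffel axioms are checked. (ii) It then forms a single twisted crossed product $D=\G\ltimes_\sigma B$ of the linking algebra $B=A\hotimes\Kop(\lu{\phi}\Hilb_\G^{-\tau}\oplus\real)$ with the diagonal $\mathcal{U}M(B)$-valued $2$-cocycle $\sigma(g,h)=\diag(1,\tau(g,h))$. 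That specific choice of $\sigma$ is precisely what makes one corner of $D$ untwisted, $pDp\cong\G\ltimes^\phi(A\hotimes\lu{\phi}\Kop_\G^{-\tau})$, and the other corner twisted, $(1-p)D(1-p)\cong\G\ltimes^\phi_\tau A$; the fullness is then a direct density argument. You flag the sign-and-cocycle reconciliation as ``the main obstacle I anticipate,'' but this reconciliation \emph{is} the proof, and the proposal does not indicate how it would be carried out. I would recommend adopting both the $c=0$ reduction and the explicit $\sigma$ from the paper before attempting to verify the Rieffel axioms by hand.
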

\begin{proof}
First, we reduce the problem for the case that $c=0$. Let $\mathscr{V}$ be the $\Zt$-graded vector space $\hat{\real}$ with the Real $c$-twisted representation of $\G$ given by $\Ad (1 \hotimes e)^{c(g)}$ where $e:=\pmx{0 & 1 \\ 1 & 0}$. Then, we have a $\ast$-isomorphism
\[ \G \ltimes ^\phi_\tau (A \hotimes \Kop (\mathscr{V})) \to \G \ltimes _{c,\tau}^\phi (A \hotimes \Kop (\hat{\real}))  \cong (\G \ltimes _c^\phi A) \hotimes \Kop (\hat{\real}) \]
given by $f(\tilde{g}) \mapsto (1 \hotimes e\gamma)^{c(\tilde{g})}f(\tilde{g})$ for any $f \in C_c(\G,r^*A,\tau)$.

Next, we assume that $c=0$. Recall that two $\Cst$-algebras $A$ and $B$ are Morita equivalent if and only if there is a $\Cst$-algebra $D$ and a projection $p \in M(D)$ such that $pDp \cong A$, $(1-p)D(1-p)\cong B$ and $DpD:=\mathrm{span} \{ d_1pd_2 \mid d_i \in D \}$ is dense in $D$. (See \cite{MR0463928}. Actually, $pD(1-p)$ is a $A$-$B$ imprimitivity bimodule and conversely $D:=\pmx{A & E \\ E^* & B}$ satisfies the above properties.)

Let $B$ be the $\Cst$-algebra $A \hotimes \Kop (\lu \phi \Hilb _\G ^{-\tau} \oplus \real) = \pmx{A \hotimes  \Kop ^{-\tau}_\G & A \hotimes  (\lu \phi \Hilb^{-\tau}_\G)^* \\ A \hotimes \Hilb ^{-\tau}_\G & A}$ with the $(\phi ,\sigma)$-twisted $\G$-action
\[
\alpha _g (\pmx{a \hotimes x & b \hotimes \xi^* \\ c \hotimes \eta & d})=\pmx{\alpha _g (a) \hotimes u_gxu_g^* & \alpha _g (b) \hotimes (u_g \xi)^* \\ \alpha _g (c)\hotimes u_g\eta & \alpha _g(d) }
\]
where $\sigma$ is the $\mathcal{U}M(B)$-valued $2$-cocycle given by $\sigma (g,h)= \pmx{1 & 0 \\ 0 &  \tau (g,h)}$. Set $D:=\G \ltimes _{\sigma }B$. Then, the projection $p:=\pmx{1 & 0 \\ 0 & 0}$ in $M(D)$ satisfies $pDp \cong \G \ltimes^\phi (A \hotimes \lu \phi  \Kop _G^{-\tau})$ and $(1-p)D(1-p) \cong \G \ltimes _{\tau }^\phi A$. Moreover, since $D$ contains a dense subspace $C_c(\G)B$ and $BC_c(\G)$, $DpD$ contains a dense subset $C_c(\G)BpBC_c(\G) =C_c(\G)BC_c(\G)$ in $D$. 
\end{proof}

\begin{thm}\label{thm:GJ}
Let $\G$ be a proper groupoid. Then there is an isomorphism
\[ \lu{\phi}\K ^\G_{*,c,\tau}(A)\cong \KR_*( \G \ltimes _{c,-\overline{\tau}}^\phi A),\]
where $\overline{\tau}:=\tau + \epsilon (c,c)$.
\end{thm}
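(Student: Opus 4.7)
The strategy is to chain together three ingredients already in place: the detwisting isomorphism of Proposition~\ref{prp:detwist}, the Morita equivalence of Lemma~\ref{lem:cross}, and the (untwisted) Real Green--Julg theorem for proper groupoids. It suffices to establish the case $*=0$; the higher-index statement follows by tensoring the twisting data with $(c_{n,m}^0,\tau_{n,m}^0)$ and invoking Example~\ref{exmp:Cl2}, which identifies $\G_{n,m}\ltimes^\phi_{c',\tau'}A$ with $(\G\ltimes^\phi_{c,\tau}A)\hotimes\Cl_{n,m}$, together with the standard relation between $\KR_{\ast}$ and $\KR_0(\blank\hotimes\Cl_{\bullet,\bullet})$.

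For the degree zero case, Proposition~\ref{prp:detwist} already supplies
\[
\lu\phi\K^\G_{0,c,\tau}(A) \;=\; \lu\phi\KK^\G_{c,\tau}(\real,A)
\;\cong\; \KKR^\G\bigl(\real,\,(A\hotimes\lu\phi\Kop_\G^{c,\overline{\tau}})_\real\bigr).
\]
Substituting $-\overline{\tau}$ for $\tau$ in Lemma~\ref{lem:cross} shows that $\G\ltimes^\phi_{c,-\overline{\tau}}A$ is Morita equivalent, as a Real $\Cst$-algebra, to $\G\ltimes(A\hotimes\lu\phi\Kop_\G^{c,\overline{\tau}})_\real$, so their $\KR$-groups coincide. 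Finally, the ordinary Real Green--Julg theorem for proper groupoids provides
\[
\KR_0\bigl(\G\ltimes(A\hotimes\lu\phi\Kop_\G^{c,\overline{\tau}})_\real\bigr)
\;\cong\; \KKR^\G\bigl(\real,\,(A\hotimes\lu\phi\Kop_\G^{c,\overline{\tau}})_\real\bigr),
\]
and composing these three isomorphisms proves the theorem.

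The main obstacle is the last ingredient. The complex, untwisted Green--Julg theorem for second countable proper groupoids with a Haar system is due to Tu, but its Real variant in the generality needed here is less explicitly recorded. The cleanest way to obtain it is to construct the isomorphism directly on Kasparov cycles, using a cutoff function $c_0\in C_c(\G^0)$ with $\int_{\G^x}c_0(s(g))\,d\lambda^x(g)=1$ (which exists because $\G$ is proper) to average any $\G$-equivariant Fredholm operator into a genuinely $\G$-invariant one, and to upgrade a $\G$-equivariant Hilbert $B$-module $E$ to a Hilbert $(\G\ltimes B)$-module via $\ebk{\xi,\eta}_{\G\ltimes B}(g):=\ebk{\xi,u_g\eta}_B\,c_0(s(g))^{1/2}c_0(r(g))^{1/2}$. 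The verifications that this assignment descends to homotopy classes, is natural in $B$, intertwines Kasparov products, and respects the Real structure (where the $\phi$-linearity of the $\G$-action enters crucially) are routine but must be carried out with care to match the sign conventions built into the twisting $-\overline{\tau}=-\tau-\epsilon(c,c)$ appearing on the crossed-product side. Once this is done, concatenating the three isomorphisms above gives the natural identification claimed.
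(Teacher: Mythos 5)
Your proposal matches the paper's proof essentially line for line: it chains Proposition~\ref{prp:detwist}, Lemma~\ref{lem:cross} (with $\tau$ replaced by $-\overline{\tau}$), and the Green--Julg theorem for proper groupoids, which the paper also obtains by citing Proposition~6.25 of Tu's work and invoking its Real version. Your additional remarks --- the Clifford reduction to degree zero and the cutoff-function construction of the Real Green--Julg isomorphism --- are correct but not needed beyond what the paper already assumes, since the cited Green--Julg statement is degree-preserving and the paper likewise treats its Real extension as routine.
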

\begin{proof}
The Green-Julg theorem is generalized for proper groupoids in Proposition 6.25 of \cite{MR1671260}. By Proposition \ref{prp:detwist}, $\lu{\phi}\KK^\G_{c,\tau} (\real , A)$ is isomorphic to the Real $\KK$-group $ \KKR ^{\G}(\real , (A \hotimes \lu \phi \Kop _\G^{c,\overline{\tau}})_\real )$. Now the conclusion follows from the usual Green-Julg theorem for Real $\Cst$-algebras and Lemma \ref{lem:cross}. 
\end{proof}

\begin{cor}\label{cor:Cl}
Let $\G':=\G_{n,m}$ and $(\phi,c',\tau'):=(\phi, c_{n,m},\tau _{n,m})$ be as in Example \ref{exmp:Cl2}. For any $\phi$-twisted $\G$-$\Cst$-algebra $A$, we have an isomorphism
\[
\lu {\phi} \K ^{\G'} _{0,c',\tau'}(A) \cong \lu \phi \K ^\G _{m-n,c,\tau}(A).
\]
In particular, $\lu{\phi_\real}\K ^{\G'_\real} _{0,c',\tau'_\real}(A) \cong \KR ^\G _{m-n,c,\tau}(A)$ for any Real $\G$-$\Cst$-algebra $A$.
\end{cor}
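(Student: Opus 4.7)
The proof should be a three-step reduction: apply Green--Julg, transport the Clifford factor across the crossed product, invoke Bott periodicity, then reapply Green--Julg.

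\medskip

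\noindent\textbf{Step 1 (Green--Julg on the left).} By Theorem~\ref{thm:GJ} applied to the proper groupoid $\G' = \G \times F_{n,m}$ and the twist $(\phi, c', \tau')$, we have
\[
\lu{\phi}\K^{\G'}_{0,c',\tau'}(A) \cong \KR_0\bigl(\G' \ltimes^{\phi}_{c', -\overline{\tau'}} A\bigr),
\]
where $\overline{\tau'} = \tau' + \epsilon(c',c')$. Since $\epsilon(c',c') = \epsilon(c,c) \times \epsilon(c^0_{n,m},c^0_{n,m})$ splits along the product decomposition, we have $-\overline{\tau'} = (-\overline{\tau}) \times (-\overline{\tau^0_{n,m}})$.

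\medskip

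\noindent\textbf{Step 2 (isolating the Clifford factor).} By Example~\ref{exmp:Cl2} applied with the twist $-\overline{\tau'}$ in place of $\tau'$ (the identification in that example is a consequence of Fubini for crossed products over a direct product $\G \times F_{n,m}$, and does not depend on the particular choice of twist),
\[
\G' \ltimes^{\phi}_{c', -\overline{\tau'}} A \;\cong\; \bigl(\G \ltimes^{\phi}_{c,-\overline{\tau}} A\bigr) \hotimes \Cl_{n,m},
\]
where I use that $F_{n,m} \ltimes_{c^0_{n,m}, -\overline{\tau^0_{n,m}}} \real$ is Morita equivalent (indeed isomorphic up to degree, as in Example~\ref{exmp:triv}) to $\Cl_{n,m}$; the signs on $\tau^0_{n,m}$ and the correction $\epsilon(c^0_{n,m},c^0_{n,m})$ only change $\Cl_{n,m}$ up to an isomorphism of Real $\Zt$-graded $\Cst$-algebras.

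\medskip

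\noindent\textbf{Step 3 (Clifford periodicity and Green--Julg on the right).} By the standard Bott/Clifford periodicity for Real $\K$-theory, tensoring with $\Cl_{n,m}$ shifts the degree by $m-n$:
\[
\KR_0\bigl(B \hotimes \Cl_{n,m}\bigr) \cong \KR_{m-n}(B),
\]
applied to $B = \G \ltimes^{\phi}_{c,-\overline{\tau}} A$. Combining with Step~1 and Step~2, and then applying Theorem~\ref{thm:GJ} in reverse in degree $m-n$ (which, in the sign convention of the definition, identifies $\KR_{m-n}(\G \ltimes^\phi_{c,-\overline{\tau}} A)$ with $\lu{\phi}\K^\G_{m-n,c,\tau}(A)$) yields the asserted isomorphism. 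The ``in particular'' statement is then immediate: when $\G$ is replaced by $\G_\real$ and $(\phi,\tau)$ by $(\phi_\real, \tau_\real)$, the group $\lu{\phi_\real}\K^{\G_\real}_{\ast,c,\tau_\real}$ is by construction $\KR^\G_{\ast,c,\tau}$.

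\medskip

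\noindent\textbf{Expected difficulty.} The conceptual content is entirely routine modulo Theorem~\ref{thm:GJ} and Example~\ref{exmp:Cl2}; the only place care is needed is bookkeeping of the signs, namely verifying that the twist $-\overline{\tau'}$ that appears on the $\G'$-side of Green--Julg splits compatibly with the product decomposition $\G' = \G \times F_{n,m}$ so that it really does factor as $-\overline{\tau}$ on $\G$ tensored against a twist on $F_{n,m}$ whose graded crossed product is $\Cl_{n,m}$. Once this is checked, the rest is a direct concatenation of isomorphisms.
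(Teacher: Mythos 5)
Your route is the paper's own: Green--Julg (Theorem \ref{thm:GJ}) on the $\G'$-side, the Fubini-type decomposition of Example \ref{exmp:Cl2} to split off a Clifford factor, Clifford periodicity, and Green--Julg again on the $\G$-side. The concatenation is fine, but the one point you yourself flag as ``the only place care is needed'' is dispatched with a false claim, and it is exactly the point the paper's proof singles out. After Green--Julg the twist on the $F_{n,m}$-factor is $-\overline{\tau^0_{n,m}}=-(\tau^0_{n,m}+\epsilon(c^0_{n,m},c^0_{n,m}))$; since $\tau^0_{n,m}+\epsilon(c^0_{n,m},c^0_{n,m})=\tau^0_{m,n}$ (the identity recorded in the paper) and twists induced from $\Zt$-extensions are their own Baer inverses, this twist is $\tau^0_{m,n}$, so the algebra you split off is $\Cl_{m,n}$, not $\Cl_{n,m}$. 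Your parenthetical assertion that the bar and the minus sign ``only change $\Cl_{n,m}$ up to an isomorphism of Real $\Zt$-graded $\Cst$-algebras'' is wrong: $\Cl_{m,n}$ and $\Cl_{n,m}$ are not isomorphic in general (already $\Cl_{1,0}$, generated by one odd element with square $-1$, is $\comp$, while $\Cl_{0,1}$, with square $+1$, is $\real\oplus\real$), and they shift the $\KR$-degree in opposite directions. Since the entire content of the corollary is the degree $m-n$ rather than $n-m$, this is not cosmetic: an argument that treats the two Clifford algebras as interchangeable is insensitive to swapping $n$ and $m$ and therefore cannot establish the stated degree at all. The correct bookkeeping is to carry $\Cl_{m,n}$ through Step 3 and verify, in the paper's degree conventions, that $\KR_0\bigl(B\hotimes\Cl_{m,n}\bigr)\cong\KR_{m-n}(B)$ for $B=\G\ltimes^{\phi}_{c,-\overline{\tau}}A$.

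A smaller point: the splitting $\epsilon(c',c')=\epsilon(c,c)\times\epsilon(c^0_{n,m},c^0_{n,m})$ you use in Step 1 does not hold on the nose, because the cross terms $(-1)^{c(g_1)c^0_{n,m}(f_2)+c^0_{n,m}(f_1)c(g_2)}$ survive. They are the coboundary of the $1$-cochain $(g,f)\mapsto(-1)^{c(g)c^0_{n,m}(f)}$, so the splitting is valid at the level of twist classes, which is all you need; but it should be stated as an equality up to coboundary, not of cocycles.
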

\begin{proof}
It follows from Example \ref{exmp:triv}, Example \ref{exmp:Cl2} and the Green-Julg theorem \ref{thm:GJ}. Note that $\tau _{n,m}+\epsilon (c_{n,m},c_{n,m})=\tau _{m,n}$ for Clifford twits.
\end{proof}

\begin{cor}\label{cor:CT}
Let $(\G,\phi,c,\tau)$ be a CT-type symmetry as in Example \ref{exmp:CT} with the CT-type $(\sA,\tau)$ and let $A$ be a $\Zt$-graded Real $\G_0$-$\Cst$-algebra. Then, the twisted equivariant $\K$-group $\lu \phi \K _{*,c,\tau}^\G (A)$ is isomorphic to the equivariant $\K$-group $\KF ^{\G_0}_{n+*}(A)$. Here, $\KF _n$ is one of $\K_n$ ($n=0,1$) or $\KR _n$ ($n=0,\dots,7$) as indicated in Table \ref{table:CT}.
\end{cor}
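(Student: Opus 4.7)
The plan is to invoke the generalized Green--Julg theorem (Theorem~\ref{thm:GJ}) and then factor the resulting twisted crossed product using the splitting $\G=\G_0\times\sA$, thereby reducing the computation to the untwisted equivariant $\K$-theory of $A$ tensored with a Clifford algebra.

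First, Theorem~\ref{thm:GJ} gives
\[\lu\phi\K^\G_{*,c,\tau}(A)\cong\KR_*\!\bigl(\G\ltimes^\phi_{c,-\overline\tau}A\bigr),\qquad \overline\tau=\tau+\epsilon(c,c).\]
Since $A$ carries only a $\G_0$-action (so $\sA$ acts trivially) and the twist $(\phi,c,\tau)$ is the pull-back from $\sA$ under $\pr_2$, the twisted crossed product factorizes as
\[\G\ltimes^\phi_{c,-\overline\tau}A\cong\G_0\ltimes\bigl(A\hotimes\lu\phi C^*_{c,-\overline\tau}\sA\bigr),\]
and after stabilizing by $\hat{\Mop}_{1,1}$ the second tensor factor becomes the stable twisted group algebra $\lu\phi\mathring{C}^*_{c,-\overline\tau}\sA$. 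By Example~\ref{exmp:triv} and the last row of Table~\ref{table:CT}, this algebra is identified, as a $\Zt$-graded Real $\Cst$-algebra, with a real Clifford algebra $\Cl_{p,q}$ when $\phi|_\sA\neq 0$ and with a complex Clifford algebra $\Cliff_k$ when $\phi|_\sA=0$.

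Finally, the Atiyah--Bott--Shapiro Clifford periodicity
\[\KR_*\!\bigl(B\hotimes\Cl_{p,q}\bigr)\cong\KR_{*+q-p}(B),\qquad \K_*\!\bigl(B\hotimes\Cliff_k\bigr)\cong\K_{*+k}(B),\]
applied to $B=\G_0\ltimes A$, combined with the untwisted Green--Julg identification $\KR_*(\G_0\ltimes A)\cong\KR^{\G_0}_*(A)$ (respectively its complex analogue), produces $\KF^{\G_0}_{n+*}(A)$, where $n$ is the shift $q-p\pmod 8$ or $k\pmod 2$ read off Table~\ref{table:CT}. The main bookkeeping step is to check that the modified twist $-\overline\tau=-\tau-\epsilon(c,c)$ arising in Green--Julg produces the same Clifford algebra recorded in the table for $\tau$; this is a direct check using the explicit generators $C,T$ in Example~\ref{exmp:CT}, the formula for $\epsilon(c,c)$, and the fact that $\Cl_{p,q}$ is isomorphic to its opposite.
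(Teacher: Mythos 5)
Your proposal follows the paper's own route: the proof of Corollary \ref{cor:CT} is exactly the Green--Julg theorem (Theorem \ref{thm:GJ}) combined with the factorization $\G\ltimes^\phi_{c,\tau}A\cong(\G_0\ltimes_{\tau_0}A)\hotimes\lu\phi C^*_{c,\tau}\sA$ coming from the trivial $\sA$-action on $A$, with the Clifford identification and degree shift supplied by Example \ref{exmp:triv}, Table \ref{table:CT} and Corollary \ref{cor:Cl}. The only caveat in your bookkeeping remark: the graded opposite of $\Cl_{p,q}$ is $\Cl_{q,p}$ rather than $\Cl_{p,q}$ itself (compare the observation $\tau_{n,m}+\epsilon(c_{n,m},c_{n,m})=\tau_{m,n}$ in the proof of Corollary \ref{cor:Cl}); here the relevant extension classes have order two, so $-\tau=\tau$, and it is the $\epsilon(c,c)$ correction that the $\K$-row of Table \ref{table:CT} is designed to absorb.
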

\begin{proof}
It follows from the Green-Julg theorem \ref{thm:GJ} and Example \ref{exmp:triv}. Since $\mathscr{A}$ acts on $A$ trivially, the crossed product $\G \ltimes _{c,\tau}^\phi A$ is isomorphic to $(\G_0 \ltimes _{\tau _0}A ) \hotimes (\lu \phi C^*_{c,\tau}\mathscr{A})$. 
\end{proof}

\section{The group ${}^\phi \K_{1,c,\tau}^G(A)_{\vD}$}\label{section:5}
In this section, we introduce another formulation of twisted equivariant $\K$-theory based on van Daele's definition of $\K$-theory for $\Zt$-graded Banach algebras. This definition is related to Karoubi's $\K$-theory \citelist{\cite{MR0238927}\cite{MR2458205}} in the case that the $\Cst$-algebra is trivially graded.

Throughout this section, we assume that $\G$ is a proper groupoid with the compact orbit space such that $\lu \phi \Hilb_{\G}^{c,\tau}$ is AFGP for any $(\phi, c,\tau)$. Here we say that a (twisted) representation $\Hilb$ is AFGP (approximately finitely generated projective) if there is a $\G$-invariant approximate unit of projections $(p_n)$ in $\Kop (\Hilb)$ (Definition 5.14 of \cite{MR2119241}). By Lemma 3.1 of \cite{MR1623142}, we can replace it with an increasing approximate unit of projections]. In other words, $\Hilb$ is AFGP if and only if it is decomposed into a direct sum of finite dimensional representations. 

\begin{remk}\label{rmk:decompble}
A reasonable criterion for $\Hilb_{\G}$ to be AFGP is given in Theorem 6.14 of \cite{MR2499440}; it holds if and only if every irreducible representation of $\G _x^x$ is contained in the fiber of a finite dimensional representation of $\G$ (see also Theorem 6.15 of \cite{MR2499440}).  
For example, this condition is satisfied for a translation groupoid $G \ltimes X$ if $G$ is compact. 

Note that $\Hilb_\G^{c,\tau}$ is AFGP if and only if so is $\Hilb _\G$ and there is a finite dimensional $(\phi,c,\tau)$-twisted representation of $\G$ since $\lu \phi \Hilb_\G^{c,\tau} \cong \Hilb _\G \hotimes \mathscr{V}$ (cf.\ Proposition 5.25 of \cite{MR2119241}). 
It is conjectured in Subsection 5.7 of \cite{MR2119241} that $\G$ has a $\tau$-twisted representation if and only if $\tau$ is in the image of $\check{H}^2(\G ; (\quot /\zahl)_\phi ) \to \check{H}^2(\G ; \T _\phi)$. 
It is known to be true in the case that $\G$ is a compact group ($\G$ always has a $\tau$-twisted $\G$-vector bundle), $\G$ is a compact space (the Grothendieck-Serre theorem \citelist{\cite{MR1608798}\cite{MR2172633}}) and $\G$ is a Renault-Deaconu groupoid \cite{mathOA150500364} but still open in general. 
\end{remk}
 
Let $A$ be a unital $\phi$-twisted $\Zt$-graded $\G$-$\Cst$-algebra. We say that an element $a \in A$ is \emph{graded $\G$-invariant} with respect to the $\Zt$-grading $c$ on $\G$ if $\alpha _g (a)=\gamma^{c(g)}(a)$ for any $g \in \G$. Let $\lu{\phi}F _c^\G(A)$ denote the set of graded $\G$-invariant odd symmetries of $A$ (remark that $\lu{\phi}F_c^\G(A)$ is possibly empty, for example when $A$ is trivially graded). 

For a finite dimensional $(\phi, c, \tau)$-twisted unitary representation $\mathscr{V}$ of $\G$, set $\lu \phi F _{c,\mathscr{V}}^\G (A):=\lu \phi F_c^\G (A \hotimes \Kop (\mathscr{V}))$. For a fixed element $e \in \lu \phi F_c^\G(A)$, set $\lu \phi \bK _{c,\tau}^\G(A)_e:=\varinjlim _\mathscr{V} \pi _0 \lu \phi F_{c,\mathscr{V}}^\G (A)$, where $\mathscr{V}$ runs over all finite dimensional $(\phi,c,\tau)$-twisted unitary representations. 
Here, the inductive limit is taken with respect to the map $\pi _0 F_{c,\mathscr{V}}^\G (A) \to \pi _0F_{c,\mathscr{W}}^\G(A)$ for $\mathscr{V} \subset \mathscr{W}$ given by $x \mapsto x \oplus e \hotimes 1_{\mathscr{V}^\perp}$. 
The direct sum $(x,y) \mapsto x \oplus y$ determines a map from $\lu \phi F_{c,\mathscr{V}}^\G (A) \times \lu \phi F_{c,\mathscr{W}}^\G (A)$ to $\lu \phi F_{c,\mathscr{V} \oplus \mathscr{W}}^\G (A)$, which induces a map $\lu \phi \bK _{c,\tau}^\G(A)_e \times \lu \phi \bK _{c,\tau}^\G(A)_e \to \lu \phi \bK _{c,\tau}^\G(A)_e$. 
Note that we obtain the same group by taking limits on finite direct sums of $\Hilb_\G$.
By the same reasoning as Proposition 2.7 of \cite{MR947500}, it is associative and commutative. Consequently $\lu \phi \bK _{c,\tau}^\G(A)_e$ has the structure of abelian semigroup with the unit $[e]$.

\begin{prp}\label{prp:indep}
Assume that there is an element $e$ in $\lu{\phi}F_{c}^\G(A)$ and an even $\G$-invariant unitary $v \in A$ such that $vev^*=-e$. Then $\lu \phi \bK_{c,\tau}^\G(A)_e$ is an abelian group and independent of the choice of such $e$ up to isomorphism.
\end{prp}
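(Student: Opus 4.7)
The approach follows van Daele's framework (cf.\ Theorem 2.8 of \cite{MR947500}), adapted to the twisted equivariant setting. Commutativity of $\oplus$ was already invoked just above (via the Proposition 2.7-style rotation), so the core task is (i) constructing inverses, after which (ii) base-point independence will follow from a standard shift argument.

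The engine for producing inverses is the ``$v$-twisted rotation.'' I would set $U := \begin{pmatrix} 0 & v^* \\ -v & 0 \end{pmatrix} \in M_2(A)$; a direct check gives $U^* = -U$ and $U^2 = -I$, so that $U_t := \cos(t)\cdot I + \sin(t)\cdot U$ is a norm-continuous path of unitaries from $I$ at $t=0$ to $U$ at $t=\pi/2$. Since $v$ is even and $\G$-invariant, so is every $U_t$. For $a, b \in \lu \phi F_c^\G(A)$, conjugation by $U_t$ keeps $\diag(a,b)$ inside $\lu \phi F_{c,\mathscr{V}\oplus \mathscr{V}}^\G (A)$ after inflating $e$ to $e \hotimes 1_\mathscr{V}$, and the endpoint computation yields
\[
U\cdot \diag(a,b)\cdot U^{-1} = \diag\bigl(v^* b v,\; v a v^*\bigr).
\]
Specialising $(a,b)=(x,e)$ and using $v^* e v=-e$ produces the homotopy $\diag(x,e)\sim \diag(-e, vxv^*)$, which in the inductive limit reads $[x] = [-e]+[vxv^*]$. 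Taking $x=e$ yields $0=[e]=2[-e]$, so $[-e]$ is $2$-torsion. Applying the same computation with $\diag(x,-e)$ instead gives $[x]+[-e]=[vxv^*]$, and combining these relations with commutativity and the Whitehead homotopy of $\diag(ex,1,1,xe)$ to the identity (in the spirit of the standard $K_1$-trick) produces, after stabilising by enough copies of $e$, an explicit cancellation $[x]+[-vxv^*]=0$. Hence every element of the semigroup admits an inverse and it is an abelian group.

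For independence, let $e_1,e_2$ be two base points satisfying the hypothesis. Since both constructions now yield groups, the class $[e_1]_{e_2}\in \lu \phi \bK^\G_{c,\tau}(A)_{e_2}$ is invertible, so we may define
\[
\Phi\colon \lu \phi \bK^\G_{c,\tau}(A)_{e_1}\longrightarrow \lu \phi \bK^\G_{c,\tau}(A)_{e_2},\qquad [x]_{e_1}\mapsto [x]_{e_2}-[e_1]_{e_2}.
\]
Stabilising $x$ by $e_1$ on the left-hand side adds $[e_1]_{e_2}$ on the right, which $\Phi$ cancels, so $\Phi$ is well-defined, and it is manifestly a homomorphism. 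The analogous map with $e_1$ and $e_2$ swapped is its two-sided inverse, yielding the desired isomorphism.

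The main obstacle is extracting a clean inverse in the first part: the $U_t$-homotopy itself is elementary, but assembling the several resulting relations into an actual inverse element requires careful bookkeeping of the $2$-torsion $2[-e]=0$ together with a Whitehead contraction of the auxiliary unitary $\diag(ex,1,1,xe)$ in order to absorb the conjugation by $v$ across enough copies of $e$.
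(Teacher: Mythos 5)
Your $v$-rotation is correct as far as it goes: $U_t$ is indeed a path of even $\G$-invariant unitaries, and it yields the relations $[x]=[-e]+[vxv^*]$ and $2[-e]=0$. But these relations alone cannot produce inverses (feeding the first relation into itself only reproduces it), and the cancellation you assert at the end, $[x]+[-vxv^*]=0$, is false in general. Take $\G$ trivial with trivial twist, $A=\hat\Mop_{1,1}(C(S^1))$ with grading $\Ad\diag(1,-1)$, $e=\begin{pmatrix}0&1\\1&0\end{pmatrix}$, $v=\diag(1,-1)$. Odd symmetries of $A\hotimes\Kop(\mathscr{V})$ are, in a basis diagonalizing the total grading, of the form $\begin{pmatrix}0&U\\U^*&0\end{pmatrix}$ with $U$ unitary, and the winding number of $\det U$ gives a semigroup homomorphism to $\zahl$ vanishing on the base point (in fact $\lu \phi \bK_{c,\tau}^\G(A)_e\cong K_1(C(S^1))\cong\zahl$). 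For $x=\begin{pmatrix}0&z\\z^*&0\end{pmatrix}$ with $z$ the canonical unitary one has $vxv^*=-x$, hence $[x]+[-vxv^*]=2[x]\neq 0$. So no Whitehead bookkeeping with $\diag(ex,1,1,xe)$ can close the argument: conjugation by $\diag(u,1)$ reintroduces the class you are trying to remove, and the proposed inverse is simply not an inverse.

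The missing ingredient is exactly what the paper delegates to Proposition 3 of \cite{MR994490} (together with the lemmas of \cite{MR947500}): if two odd graded $\G$-invariant symmetries $a,b$ anticommute, then $\cos(t)\,a+\sin(t)\,b$ stays in $\lu{\phi}F_{c}^\G$; applying this to the pairs $\left(\diag(x,-exe),\begin{pmatrix}0&e\\e&0\end{pmatrix}\right)$ and $\left(\diag(e,-e),\begin{pmatrix}0&e\\e&0\end{pmatrix}\right)$ gives $[x]+[-exe]=[e]+[-e]=[-e]$ for every $x$. Combined with your (correct) relation $2[-e]=0$ --- the only point where the unitary $v$ is really needed --- this shows every class is invertible, with $-[x]=[-exe]+[-e]$; in the example above this has winding $-1$, whereas your candidate $[-vxv^*]$ has winding $+1$. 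Your base-point-change map $\Phi$ is fine in spirit once the group structure is in place, except that the correction term must be $[e_1\hotimes 1_{\mathscr{V}}]_{e_2}$ for the $\mathscr{V}$ carrying the chosen representative rather than a single class $[e_1]_{e_2}$ (the class of $e_1\hotimes 1_{\mathscr{V}}$ in the $e_2$-group depends on $\mathscr{V}$); the paper instead exhibits an explicit even invariant unitary, $\Ad\bigl(\tfrac12\diag(1,w^*)\begin{pmatrix}1+fe&1-fe\\1-fe&1+fe\end{pmatrix}\diag(1,v)\bigr)$, implementing the isomorphism directly, which avoids this bookkeeping.
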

\begin{proof}
The proof is given in the same way as in Proposition 3 of \cite{MR994490}. Actually, if there is another $f \in \lu{\phi}F^\G_{c,\tau}(A)$ and even $\G$-invariant unitary $w$ such that $wfw^*=-f$, the map 
\[
\Ad \bk{ \frac{1}{2}\pmx{1 & 0 \\ 0 & w^*}\pmx{1+fe & 1-fe \\ 1-fe & 1+fe}\pmx{1 & 0 \\ 0 & v}}
\]
induces an isomorphism $\lu \phi \mathbf{K}_{c,\tau}^\G(A)_e \to \lu \phi \mathbf{K}_{c,\tau}^\G(A)_f$.
\end{proof}
\begin{remk}\label{rmk:indep}
Let $u$ be an even $\G$-invariant unitary in $A \hotimes \Kop (\mathscr{V})$ commuting with $e$. Then, $\Ad u$ acts trivially on $\lu \phi \bK_{c,\tau}^\G(A)_e$ since $usu^* \oplus e = (u \oplus u^*)(s \oplus e)(u \oplus u^*)$ and there is a homotopy $(u \oplus 1) R_t (u^* \oplus 1)R_t$ of even $\G$-invariant unitaries commuting with $e \oplus e$ connecting $u \oplus u^*$ and $1$ (where $R_t$ are the $2$-by-$2$ rotation matrices). Consequently, the isomorphism in Proposition \ref{prp:indep} is independent of the choice of unitaries $v$. Moreover, when $e$ and $f$ are anticommutative, we have a simple presentation $\frac{1}{2}\Ad (1+ef)$ of the isomorphism $\lu \phi \bK_{c,\tau}^\G(A)_e \cong \lu \phi \bK_{c,\tau}^\G(A)_f$. 
\end{remk}

Let $\hat{\real} ^{2,2}_c$ be the $\Zt$-graded real Hilbert space $\real \oplus \real ^{\mathrm{op}} \oplus \real \oplus \real ^{\mathrm{op}}$ with the $\phi$-twisted representation $u_g \xi =u^{c(g)} (\lu{\phi(g)}\xi)$ of $\G$. Then, the $\Zt$-graded $\phi$-twisted $\Cst$-algebra $\hat{\Mop} _{2,2}^c:=\Kop (\hat{\real} ^{2,2}_c)$ is Morita-equivalent to $\real$ and has a graded $\G$-invariant odd symmetry $v$. Here, 
\[u=\pmx{0 & 0 & 1 & 0 \\ 0 & 0 & 0 & 1 \\ 1 & 0 & 0 & 0 \\ 0 & 1 & 0 & 0}, v=\pmx{0 & 1 & 0 & 0 \\ 1 & 0 & 0 & 0 \\ 0 & 0 & 0 & -1 \\ 0 & 0 & -1 & 0}.\]
We simply write $\hat{\Mop} _{2,2}^c(A)$ for the tensor product $\hat{\Mop} _{2,2}^c \hotimes A$.

\begin{defn}
The twisted equivariant $\K_1$-group $\lu{\phi}\K _{1,c,\tau}^\G(A)_{\vD}$ is defined by the abelian group $\Ker (\lu{\phi}\bK_{c,\tau}^\G(\hat{\Mop} _{2,2}^c(A^+))_{v} \to \lu{\phi}\bK_{c,\tau}^\G(\hat{\Mop} _{2,2}^c))_{v}$.
\end{defn}

We remark that when $A$ is unital $A^+$ is isomorphic to the direct sum $A \oplus C_b(\G ^0)$ and hence $\lu{\phi}\K ^\G_{1,c,\tau}(A)_{\vD}$ is isomorphic to $\lu{\phi}\bK_{c,\tau}^\G(\hat{\Mop}_{2,2}^c(A))_{v}$.

\begin{lem}\label{lem:vDdetw}
There is a natural isomorphism $\lu{\phi}\K _{1,c,\tau}^\G(A)_{\vD} \cong \lu{\phi}\K _{1}^\G(A \hotimes \lu \phi \Kop _{\G}^{c,\overline{\tau}})_{\vD}$, where $\overline{\tau} :=\tau + \epsilon (c,c)$.
\end{lem}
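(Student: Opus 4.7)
The plan is to replicate the Morita-theoretic argument of Proposition~\ref{prp:detwist}, now carried out at the level of the van Daele symmetries. The starting point is that the AFGP hypothesis and Remark~\ref{rmk:decompble} guarantee a finite-dimensional $(\phi,c,\bar\tau)$-twisted unitary representation $\mathscr{U}$ of $\G$. The $\Zt$-grading operator $\gamma_\mathscr{U}\in \Kop(\mathscr{U})$ will provide the bridge between the $c$-graded and ungraded worlds: it is even, a symmetry, and since $u_g^\mathscr{U}$ is homogeneous of degree $c(g)$ we have $\alpha_g(\gamma_\mathscr{U})=(-1)^{c(g)}\gamma_\mathscr{U}$. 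A key observation, via Lemma~\ref{lem:tensor} and the relation $\bar\tau=\tau+\epsilon(c,c)$, is that $\mathscr{V}\hotimes \mathscr{U}^*$ is a plain $\phi$-twisted representation whenever $\mathscr{V}$ is $(\phi,c,\tau)$-twisted; this is the van Daele analogue of the detwisting in Proposition~\ref{prp:detwist}.

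The forward map is then constructed as follows. For a cycle $e \in \lu\phi F_c^\G(\hat\Mop_{2,2}^c(A^+)\hotimes\Kop(\mathscr{V}))$, I set
\[
\Phi(e) := e \hotimes \gamma_\mathscr{U} \in \hat\Mop_{2,2}^c(A^+)\hotimes\Kop(\mathscr{V})\hotimes\Kop(\mathscr{U}).
\]
A straightforward check shows that $\Phi(e)$ is an odd, ungraded $\G$-invariant symmetry: the $(-1)^{c(g)}$-twisting in $\alpha_g(e)$ and in $\alpha_g(\gamma_\mathscr{U})$ cancel. Next, using the Morita equivalence of $\phi$-twisted $\G$-$\Cst$-algebras between $\Kop(\mathscr{U})$ and $\lu\phi\Kop_\G^{c,\bar\tau}$ (both sitting in the graded Brauer class $(c,\bar\tau)$), together with the Brauer-group identity $\hat\Mop_{2,2}^c\hotimes\lu\phi\Kop_\G^{c,\bar\tau}\sim \hat\Mop_{2,2}^0\hotimes\lu\phi\Kop_\G^{0,\bar\tau}\hotimes\Kop(\mathscr{V}')$ that absorbs the variable representation $\mathscr{V}$ into the direct limit on the target side, $\Phi(e)$ becomes an element of $\lu\phi F_0^\G\bigl(\hat\Mop_{2,2}^0((A\hotimes\lu\phi\Kop_\G^{c,\bar\tau})^+)\hotimes\Kop(\mathscr{W})\bigr)$ for some $\phi$-twisted representation~$\mathscr{W}$.

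The inverse is constructed symmetrically using $\bdot\mathscr{U}$ (of twist $(\phi,c,\tau)$) to convert the ungraded $\G$-invariance back to the $c$-graded one, mirroring the two-way correspondence of Proposition~\ref{prp:detwist}. One verifies that these maps are well-defined on the inductive limits over finite-dimensional representations (so the image is independent of the choices of $\mathscr{V}$, $\mathscr{U}$, and the Morita bimodule up to what is absorbed by the direct limit), that they are mutually inverse group homomorphisms by the same $\frac12\Ad(1+ef)$-type formula used in Proposition~\ref{prp:indep}, and that they commute with the augmentation maps defining $K_1$ as a kernel.

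The main obstacle will be the careful bookkeeping of how the various Morita equivalences, Brauer-class additions, and $\hat\Mop_{2,2}^c\leftrightarrow \hat\Mop_{2,2}^0$ reassignments interact with the $\Zt$-grading conventions and with the absorbed direct limits. Conceptually, however, the heart of the argument is precisely the $\gamma_\mathscr{U}$-tensor trick of Step~2: it is forced by the twist identity $\bar\tau=\tau+\epsilon(c,c)$ and transports graded $\G$-equivariance to ungraded $\G$-equivariance in exactly the same way that tensoring with $\bdot\Hilb_\G^{c,\bar\tau}$ does in Proposition~\ref{prp:detwist}.
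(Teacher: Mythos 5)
There is a genuine gap, and it is at the heart of the construction. The paper's proof does not introduce any auxiliary tensor factor: it uses the observation from Example~\ref{exmp:Morita} that the \emph{identity} map on the algebra $A \hotimes \Kop(\mathscr{V})$, with the $\G$-action rotated from $\Ad\pi(g)$ to $\Ad(\gamma^{c(g)}\pi(g))$, i.e.\ $\Kop(\mathscr{V}) \to \Kop(\bdot{\mathscr{V}})$, carries $c$-graded $\G$-invariant odd symmetries bijectively to ordinary $\G$-invariant ones and shifts the twist from $\tau$ to $\overline{\tau}$. Since $\Kop(\bdot{\mathscr{V}})$ lies in Brauer class $(c,\overline{\tau})$, Morita invariance of $\lu\phi\K_1^\G(\blank)_\vD$ then matches the inductive limit with $\lu{\phi}\K_{1}^\G(A \hotimes \lu \phi \Kop_{\G}^{c,\overline{\tau}})_{\vD}$. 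Your map $\Phi(e)=e\hotimes\gamma_\mathscr{U}$ instead adjoins a new factor $\Kop(\mathscr{U})$. Although your $\G$-invariance computation $\alpha_g(e\hotimes\gamma_\mathscr{U})=e\hotimes\gamma_\mathscr{U}$ is correct (the signs $(-1)^{c(g)}$ from $\gamma^{c(g)}(e)=(-1)^{c(g)}e$ and $\alpha_g(\gamma_\mathscr{U})=(-1)^{c(g)}\gamma_\mathscr{U}$ do cancel), the element now lives in $\hat\Mop_{2,2}^c(A^+)\hotimes\Kop(\mathscr{V}\hotimes\mathscr{U})$, whose auxiliary factor $\Kop(\mathscr{V})\hotimes\Kop(\mathscr{U})$ has Brauer class $(c,\tau)+(c,\overline{\tau})=(0,\,\tau+\overline{\tau}+\epsilon(c,c))=(0,2\tau)$, not the class $(c,\overline{\tau})$ that cycles in the target group $\lu{\phi}\K_{1}^\G(A \hotimes \lu\phi\Kop_\G^{c,\overline{\tau}})_\vD$ must carry. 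These classes agree only when $c=0$ (and $\tau=0$), so $\Phi(e)$ simply does not define an element of the right group in general. Correspondingly, the ``Brauer-group identity'' $\hat\Mop_{2,2}^c\hotimes\lu\phi\Kop_\G^{c,\overline{\tau}}\sim\hat\Mop_{2,2}^0\hotimes\lu\phi\Kop_\G^{0,\overline{\tau}}\hotimes\Kop(\mathscr{V}')$ that you invoke is false: the left side has class $(c,\overline{\tau})$ and the right side $(0,\overline{\tau})$.

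The remark about $\mathscr{V}\hotimes\mathscr{U}^*$ being a plain $\phi$-twisted representation is also beside the point, since $\Phi$ tensors with $\gamma_\mathscr{U}\in\Kop(\mathscr{U})$, not with the bimodule $\mathscr{U}^*$; and your proposed ``inverse'' built from $\bdot{\mathscr{U}}$ accumulates a further tensor factor rather than cancelling $\Kop(\mathscr{U})$, so it cannot be inverse to $\Phi$. The fix is to drop the auxiliary $\mathscr{U}$ entirely: keep the same underlying algebra $A\hotimes\Kop(\mathscr{V})$ and just reinterpret the $\G$-action via $\bdot{\mathscr{V}}$, exactly as in Example~\ref{exmp:Morita}; the computation you did for $\gamma_\mathscr{U}$ is precisely the sign cancellation that makes the identity map $\lu\phi F_c(A\hotimes\Kop(\mathscr{V}))\to\lu\phi F(A\hotimes\Kop(\bdot{\mathscr{V}}))$ a homeomorphism, and then the Morita equivalence of $\Kop(\bdot{\mathscr{V}})$ with $\lu\phi\Kop_\G^{c,\overline{\tau}}$ finishes the argument.
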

\begin{proof}
For a $(\phi,c,\tau)$-twisted representation $\mathscr{V}$, let $\dot{\mathscr{V}}$ be as in Example \ref{exmp:Morita}.
The identity $\ast$-homomorphism $A \hotimes \Kop (\mathscr{V}) \to A \hotimes \Kop (\bdot{\mathscr{V}})$ gives a homeomorphism $\varphi_* : \lu \phi F_{c}(A \hotimes \Kop (\mathscr{V})) \to \lu \phi F(A \hotimes \Kop (\bdot{\mathscr{V}}))$. Since $\Kop (\bdot {\mathscr{V}})$ is Morita equivalent to $\lu \phi \Kop _{\G}^{c,\overline{\tau}}$, the inductive limit $\varinjlim _{\mathscr{V}}\pi _0  \lu \phi F(A \hotimes \Kop (\bdot{\mathscr{V}}))$ is isomorphic to $\lu{\phi}\K _{1}^\G(A \hotimes \lu \phi \Kop _{\G}^{c,\overline{\tau}})_{\vD}$. Therefore, $\varphi _*$ gives a desired isomorphism.
\end{proof}

\begin{lem}\label{lem:sym}
If $s,t \in \lu \phi F_{c,\mathscr{V}}^\G(A)$ satisfies $\ssbk{s-t} <2$, there is a $\G$-invariant even self-adjoint element $h \in A \hotimes \Kop (\mathscr{V})$ such that $\ssbk{h}<1$, $tht=-h$ and $s=\exp (2\pi i h)t$. Moreover, $s$ and $t$ are homotopic in $\lu \phi F_{c,\mathscr{V}}^\G(A)$.
\end{lem}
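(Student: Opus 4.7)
The plan is to reduce everything to a functional-calculus computation on the even unitary $u := st$. Since $t$ is a self-adjoint unitary, $\ssbk{u-1} = \ssbk{(s-t)t} = \ssbk{s-t} < 2$, so $-1 \notin \sigma(u)$ and the principal branch of $\log$ is continuous on a neighborhood of $\sigma(u)$. Moreover $u$ is even (both $s,t$ are odd) and graded $\G$-invariant, because $\alpha_g(s)\alpha_g(t) = \gamma^{c(g)}(s)\gamma^{c(g)}(t) = st$, the two $c(g)$-signs cancelling.

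I would set $h := \frac{1}{2\pi i}\log(u)$ and check the four required properties in turn. Self-adjointness follows from $(\log u)^* = \log(u^*) = -\log u$, the $i$ in the denominator being absorbed. The norm bound $\ssbk{h} < \tfrac{1}{2}<1$ is immediate from $\sigma(h)\subset(-\tfrac{1}{2},\tfrac{1}{2})$. Evenness is inherited from $u$ via the continuous functional calculus, and $s = e^{2\pi i h}t$ holds by construction. For the anticommutation $tht = -h$ I would observe that $tut = (tst)t^2 = ts = u^{-1}$ and then apply $\log(u^{-1}) = -\log u$ on $\sigma(u)$.

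The step I expect to demand the most care is the $\G$-equivariance, because $\alpha_g$ is $\phi(g)$-linear and the scalar $(2\pi i)^{-1}$ is purely imaginary. The clean statement is that every spectral function $e^{2\pi i r h} = u^r$ for $r\in\real$ is $\G$-fixed, since $f(z)=z^r$ on the unit circle satisfies $\tilde f = f$ for $\tilde f(z):=\overline{f(\bar z)}$; this is exactly what is needed for the continuous functional calculus to commute with every $\alpha_g$, linear or antilinear, and is the form of invariance that the homotopy below actually uses. Any nontrivial subtlety in this lemma lives in this functional-calculus bookkeeping.

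For the homotopy I would take the obvious path $s_r := e^{2\pi i r h}t$ for $r\in[0,1]$, so $s_0=t$ and $s_1=s$. The relation $tht=-h$ yields $t e^{x h} = e^{-xh}t$ for every $x\in\real$, from which each $s_r$ is directly seen to be an odd self-adjoint involution; graded $\G$-invariance of $s_r$ transfers from the previous paragraph and the corresponding property of $t$; and norm-continuity in $r$ is clear, completing the homotopy and the proof. I do not foresee any genuine obstacle beyond the $\phi$-twisted equivariance step.
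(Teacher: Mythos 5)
Your proof is correct and matches the paper's own proof in every essential step: set $u=st$, observe $\ssbk{u-1}<2$ so that $-1\notin\sigma(u)$, define $h$ as a (normalized) logarithm of $u$, verify the required algebraic properties and $tht=-h$ via $tut=ts=u^*$, and homotope via $r\mapsto e^{2\pi i r h}t$. Your extra care with the $\phi$-twisted (antilinear) functional calculus is the one place you go beyond the paper's two-line argument, and it is well-placed: since $(2\pi i)^{-1}$ is purely imaginary, the $h$ so constructed in fact satisfies $\alpha_g(h)=(-1)^{\phi(g)}h$ rather than literal $\alpha_g(h)=h$ when $\phi(g)=1$, and — exactly as you note — what the homotopy actually needs is only that each $u^r=e^{2\pi i r h}$ is fixed by every $\alpha_g$, which holds because the functions $z\mapsto z^r$ on the circle are invariant under $f\mapsto\overline{f(\bar z)}$.
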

\begin{proof}
Since $\ssbk{st-1}<2$, $-1$ is not contained in the spectrum of the $\G$-invariant even unitary $st$. Hence $h:=\log st$ with respect to the branch $\{ -i\real _{\geq 0} \}$ is well-defined self-adjoint element in $A \hotimes \Kop (\mathscr{V})$ such that $\ssbk{h}<1$ and $tht=\log (t(st)t)=\log (st)^*=-\log st=-h$. Now, $\tau \mapsto \exp (2 \pi i \tau h)t$ gives a continuous path connecting $s$ and $t$ in $\lu \phi F_{c,\mathscr{V}}^\G(A)$.
\end{proof}

\begin{lem}\label{lem:vDisK}
The functor $\lu{\phi}\K ^\G_{1,c,\tau}(\blank)_\vD$ has the following properties.
\begin{enumerate}
\item The correspondence $A \mapsto \lu{\phi}\K^\G_{1,c,\tau}(A)_\vD$ is a covariant functor from the category of $\phi$-twisted $\Zt$-graded $\G$-$\Cst$-algebras and $\ast$-homomorphisms to the category of abelian groups.
\item If two $\G$-equivariant $\ast$-homomorphisms $\varphi$ and $\psi$ are homotopic, then $\varphi _* =\psi _*:\lu{\phi}\K^\G_{1,c,\tau} (A)_\vD \to \lu{\phi}\K^\G_{1,c,\tau}(B)_\vD$.
\item For any $\phi$-twisted representations $\mathscr{V}_1 \subset \mathscr{V}_2$ of $\G$, the inclusion $\Kop (\mathscr{V}_1) \to \Kop (\mathscr{V}_2) $ induces the isomorphism 
\[
f_{\mathscr{V}_1, \mathscr{V}_2}^A: \lu \phi \K^\G_{1,c,\tau}(A \hotimes \Kop (\mathscr{V} _1))_\vD \to \lu \phi \K ^\G_{1,c,\tau}(A \hotimes \Kop (\mathscr{V}_2) )_\vD .
\]
\item A short exact sequence $0 \to I \xra{\iota} A \xra{\pi} A/I \to 0$ of $\phi$-twisted $\G$-$\Cst$-algebras induces an exact sequence
\[
\lu{\phi}\K _{1,c,\tau}^\G(I)_\vD \to \lu{\phi}\K_{1,c,\tau}^\G (A)_\vD \to \lu{\phi}\K_{1,c,\tau}^\G (A/I)_\vD.
\]
\end{enumerate}
\end{lem}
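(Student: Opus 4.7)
The plan is to verify each of the four properties by analyzing the inductive-limit structure of $\lu\phi\bK_{c,\tau}^\G$ and invoking Lemma \ref{lem:sym} as the main technical tool for (4). Items (1) and (2) are essentially formal. For (1), a $\G$-equivariant $\ast$-homomorphism $\varphi: A \to B$ extends canonically to $\varphi^+: A^+ \to B^+$, and tensoring with $\id$ on $\hat\Mop_{2,2}^c \hotimes \Kop(\mathscr{W})$ produces maps $\lu\phi F^\G_{c,\mathscr{W}}(\hat\Mop_{2,2}^c(A^+)) \to \lu\phi F^\G_{c,\mathscr{W}}(\hat\Mop_{2,2}^c(B^+))$ compatible with the stabilization $\mathscr{W} \subset \mathscr{W}'$ and preserving the basepoint $v$; this descends to a homomorphism $\varphi_*$ on the kernels defining the $\vD$-$\K_1$-groups. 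For (2), a homotopy realized as a $\G$-equivariant $\ast$-homomorphism $H: A \to B[0,1]$ sends $s$ to $H(s)$, and the continuous path $r \mapsto \ev_r \circ H(s)$ shows $\varphi_{0*}[s] = \varphi_{1*}[s]$ in $\pi_0 \lu\phi F^\G_{c,\mathscr{W}}(\hat\Mop_{2,2}^c(B^+))$.

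For (3), the identification $\Kop(\mathscr{V}_i) \hotimes \Kop(\mathscr{W}) \cong \Kop(\mathscr{V}_i \hotimes \mathscr{W})$ as $\phi$-twisted $\G$-$\Cst$-algebras (the projective phases cancel under the conjugation action) provides a morphism of inductive systems, and the $\G$-equivariant orthogonal decomposition $\mathscr{V}_2 = \mathscr{V}_1 \oplus \mathscr{V}'$ coming from the inclusion gives, for any $(\phi,c,\tau)$-twisted representation $\mathscr{W}$, the decomposition $\mathscr{V}_2 \hotimes \mathscr{W} = (\mathscr{V}_1 \hotimes \mathscr{W}) \oplus (\mathscr{V}' \hotimes \mathscr{W})$; this makes the two inductive systems cofinal, so the induced map on limits, hence on the kernels defining $\K_1$, is an isomorphism.

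The substantive content is (4). Let $[s] \in \lu\phi\K^\G_{1,c,\tau}(A)_\vD$ be represented by $s \in \lu\phi F^\G_{c,\mathscr{W}}(\hat\Mop_{2,2}^c(A^+))$ with $\pi_*[s] = 0$. After enlarging $\mathscr{W}$ and stabilizing $s$, the image $\pi(s)$ is connected to $v$ by a continuous path in $\lu\phi F^\G_{c,\mathscr{W}}(\hat\Mop_{2,2}^c((A/I)^+))$; subdivide this path so that successive points $t_0 = \pi(s), t_1, \dots, t_n = v$ satisfy $\ssbk{t_{k+1} - t_k} < 2$. By Lemma \ref{lem:sym}, each step has the form $t_{k+1} = \exp(2\pi i h_k) t_k$ for an even self-adjoint $\G$-invariant $h_k$ anticommuting with $t_k$, and the anticommutation yields $\exp(i\pi h_k) t_k \exp(-i\pi h_k) = t_{k+1}$. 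Hence $\bar u := \exp(i\pi h_{n-1})\cdots\exp(i\pi h_0)$ satisfies $\bar u \pi(s) \bar u^* = v$. Lift each $h_k$ to an even self-adjoint element $\tilde h_k \in \hat\Mop_{2,2}^c(A^+) \hotimes \Kop(\mathscr{W})$ (by pulling back and symmetrizing), then average against the Haar system (using properness of $\G$) to impose $\G$-invariance. Setting $\tilde u := \exp(i\pi\tilde h_{n-1})\cdots\exp(i\pi \tilde h_0)$ produces a $\G$-invariant even unitary lifting $\bar u$, and $s' := \tilde u s \tilde u^*$ is an odd symmetry with $\pi(s') = v$; therefore $s' \in \lu\phi F^\G_{c,\mathscr{W}}(\hat\Mop_{2,2}^c(I^+))$, and the path $r \mapsto \tilde u_r s \tilde u_r^*$ with $\tilde u_r := \exp(i\pi r\tilde h_{n-1})\cdots\exp(i\pi r \tilde h_0)$ connects $s$ to $s'$ in $\lu\phi F^\G_{c,\mathscr{W}}(\hat\Mop_{2,2}^c(A^+))$, yielding $[s] = \iota_*[s'] \in \Im(\lu\phi\K^\G_{1,c,\tau}(I)_\vD \to \lu\phi\K^\G_{1,c,\tau}(A)_\vD)$. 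The main obstacle is the simultaneous $\G$-invariant lifting step, for which the properness of $\G$ (enabling Haar-system averaging) and the AFGP hypothesis on $\lu\phi\Hilb_\G^{c,\tau}$ (ensuring the inductive system over finite-dimensional twisted representations is rich enough to accommodate the lifts) are essential.
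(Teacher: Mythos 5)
Your items (1) and (2) are fine (they are formal, as in the paper), and your argument for (4) is essentially the paper's: subdivide the null-homotopy of $\pi(s)$ into steps of norm less than $2$, apply Lemma \ref{lem:sym} to each step, and lift equivariantly. Your variant --- conjugating $s$ by the lifted unitaries $\exp(i\pi\tilde h_k)$ rather than lifting an anticommuting $h$ and forming $\exp(2\pi i\tilde h)\tilde t$ directly --- is a harmless reformulation; the computation $\exp(i\pi h_k)t_k\exp(-i\pi h_k)=\exp(2\pi i h_k)t_k$ is correct, and averaging against a cutoff function (properness, compact orbit space) does produce the required $\G$-invariant even self-adjoint lifts. (Your closing remark that AFGP is needed for this lifting is off the mark --- it plays no role in (4) --- but that is only commentary.)

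The genuine gap is in (3). The statement allows \emph{arbitrary} $\phi$-twisted representations $\mathscr{V}_1\subset\mathscr{V}_2$, and the case actually needed later (stability under $\Kop_\G=\Kop(\Hilb_\G)$, used in Lemma \ref{lem:Kuip}, in Theorem \ref{thm:KvD}, and to run Cuntz's half-exactness machinery for the long exact sequence) is the infinite-dimensional one, e.g.\ $f^A_{\real,\Hilb_\G}$. Your cofinality argument cannot reach it: the inductive limit defining $\lu\phi\bK^\G_{c,\tau}(\cdot)_e$ runs only over \emph{finite-dimensional} $(\phi,c,\tau)$-twisted representations $\mathscr{W}$, and when $\mathscr{V}_2$ is infinite dimensional the objects $\mathscr{V}_2\hotimes\mathscr{W}$ neither belong to that system nor embed into any $\mathscr{V}_1\hotimes\mathscr{W}'$ with everything finite dimensional, so the two systems are not mutually cofinal in any common ambient system; moreover a symmetry in $\hat\Mop_{2,2}^c((A\hotimes\Kop(\mathscr{V}_2))^+)\hotimes\Kop(\mathscr{W})$ does not localize to a finite-dimensional subrepresentation of $\mathscr{V}_2$ without an approximation step. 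What is missing is precisely the paper's continuity argument: the functor $\lu\phi\K^\G_{1,c,\tau}(\blank)_\vD$ commutes with $\Cst$-inductive limits (proved via Lemma \ref{lem:sym}, as in the classical $\K$-theory argument), which together with the standing AFGP hypothesis --- giving $\Kop(\Hilb_\G)=\varinjlim \Kop(p_n\Hilb_\G)$ for an increasing $\G$-invariant approximate unit of projections --- yields the case $f^A_{\real,\Hilb_\G}$; the general case then reduces to this one via $f^A_{\mathscr{V},\Hilb_\G}=f^{A\hotimes\Kop(\mathscr{V})}_{\real,\Hilb_\G}$ and the Kasparov stabilization $\mathscr{V}_2\oplus\Hilb_\G\cong\Hilb_\G$ from (\ref{form:Kas}). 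Your proposal uses neither continuity nor AFGP at this point, so (3) --- in the generality in which it is stated and later used --- is not established. (Even in the purely finite-dimensional case you should justify cofinality, e.g.\ by noting that $\mathscr{V}_1\hotimes\overline{\mathscr{V}_1}$ contains the trivial representation, and say a word about the unitizations $(A\hotimes\Kop(\mathscr{V}_i))^+$, which do not simply absorb into the stabilizing $\Kop(\mathscr{W})$.)
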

\begin{proof}
The assertion (1) and (2) follows from the definition. 

When $\mathscr{V} _1$ and $\mathscr{V} _2$ are finite dimensional, (3) follows by definition. Therefore, $f^A_{\real, \Hilb _\G}$ is also isomorphic by continuity of the functor $\lu{\phi}\K ^\G_{1,\tau,c}(\blank)_\vD$, that is, $\varinjlim \lu{\phi}\K ^\G_{1,\tau,c}(A_n)_\vD \cong \lu{\phi}\K ^\G_{1,\tau,c}(\varinjlim A_n)_\vD$. It can be checked in the same way as the case of $\K$-groups for $\Cst$-algebras by Lemma \ref{lem:sym} (see for example Theorem 6.3.2 of \cite{MR1783408}). Moreover, for any $\phi$-twisted representation $\mathscr{V}$ of $\G$, $f_{\mathscr{V},\Hilb _\G}^A=f_{\real , \Hilb _\G}^{A \hotimes \Kop (\mathscr{V})}$ since $\Hilb _\G \cong \mathscr{V} \otimes \Hilb _\G$. Finally, $f_{\mathscr{V}_1 , \mathscr{V}_2}^A$ is isomorphic for general $\mathscr{V}_1$ and $\mathscr{V}_2$ since the diagram
\[
\xymatrix{
\lu \phi \K ^\G_{1,c,\tau}(A \hotimes \Kop (\mathscr{V}_1))_\vD \ar[r]^{f_{\mathscr{V}_1,\mathscr{V}_2}^A} \ar[rd]_{f^A_{\mathscr{V}, \mathscr{V}_2 \oplus \Hilb _\G}} & \lu \phi \K ^\G_{1,c,\tau}(A \hotimes \Kop (\mathscr{V}_1 \oplus \mathscr{V}_2))_\vD \ar[d]^{f^A_{\mathscr{V}_1 \oplus \mathscr{V}_2, \Hilb _\G}} \\
&\lu \phi \K ^\G_{1,c,\tau}(A \hotimes \Kop (\mathscr{V}_1 \oplus \mathscr{V}_2 \oplus \Hilb _\G))_\vD 
}
\]
commutes, (\ref{form:Kas}) implies $\mathscr{V}_2 \oplus \Hilb _\G \cong \Hilb _\G$.

To see (4), it suffices to check that $\Ker \pi _*=\Im \iota _*$. We may assume that $A$ is unital. Let $\tilde{s}$ be a super $\G$-invariant odd symmetry in $A$ and let $s_t$ be a homotopy of super $\G$-invariant odd symmetries in $\pi (\tilde{s})=s_0$. Then there are $t_1,t_2,\dots,t_n$ such that $\ssbk{s_{t_i}-s_{t_j}}<2$. Therefore, it suffices to show that for $\tilde{s} \in \lu \phi F_{c,\mathscr{V}}^\G(\hat{\Mop}_{2,2}^c(A))$ and $s' \in \lu \phi F_{c,\mathscr{V}}^\G(\hat{\Mop}_{2,2}^c(A/I))$ such that $\ssbk{\pi (\tilde{s})-s'}<2$, there is a lift $\tilde{s}' \in \lu \phi F_{c,\mathscr{V}}^\G(\hat{\Mop}_{2,2}^c(A))$ of $s$ such that $\tilde{s}$ and $\tilde{s}'$ are homotopic. This follows from Lemma \ref{lem:sym} and the fact that every even $\G$-invariant self-adjoint $h \in \hat{\Mop}_{2,2}^c(A/I)\hotimes \Kop(\mathscr{V}) $ anticommuting with $s$ has an even $\G$-invariant self-adjoint lift $\tilde{h} \in \hat{\Mop}_{2,2}^c(A/I)\hotimes \Kop(\mathscr{V}) $ anticommuting with $\tilde{s}$. 
\end{proof}

Lemma \ref{lem:vDisK} asserts that the functor $\lu \phi \K _{1,c,\tau}^\G(\blank)_\vD$ is equivariantly stable, homotopy invariant and half-exact. Hence, the same argument as Proposition 4.1 of \cite{MR750677} can be applied for $\lu \phi \K _{1,c,\tau}^\G(\blank)_\vD$ and we obtain the long exact sequence 
\ma{
\dots \to \lu \phi \K _{1,c,\tau}^\G(SI) _\vD & \to \lu{\phi}\K_{1,c,\tau}^\G (SA)_\vD \to \lu{\phi}\K_{1,c,\tau}^\G (SA/I)_\vD \xra{\partial} \\
&\lu{\phi}\K _{1,c,\tau}^\G(I)_\vD \to \lu{\phi}\K_{1,c,\tau}^\G (A)_\vD \to \lu{\phi}\K_{1,c,\tau}^\G (A/I)_\vD.}
Here the boundary map $\partial$ is given in the same way as Proposition 4.7 of \cite{MR947500}.

Moreover, this exact sequence is extended for negative direction. Let $S^{p,q}$ denote the Real $\Cst$-algebra $C_0(\real ^{p+q})$ together with the Real structure $\overline{f}(\xi, \eta)=\overline{f(\xi,-\eta)}$.

\begin{prp}\label{prp:Bott}
There is a natural isomorphism between $\lu \phi \K _{1,c,\tau}^\G(A)_\vD$ and $\lu \phi \K _{1,c,\tau}^\G (S^{1,1}A)_\vD$.
\end{prp}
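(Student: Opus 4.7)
The strategy is to prove the proposition as a form of complex Bott periodicity for van Daele's $\K_1$ in the twisted equivariant setting, by constructing an explicit Bott map and an inverse index map in the spirit of Atiyah's classical argument. As a first reduction, by Lemma \ref{lem:vDdetw} it suffices to establish the isomorphism for trivially twisted $(c,\tau)$: one replaces $A$ by $B := A \hotimes \lu\phi\Kop_{\G}^{c,\overline{\tau}}$ and reduces to proving
\[
\lu\phi \K_{1}^{\G}(B)_{\vD} \cong \lu\phi \K_{1}^{\G}(S^{1,1}B)_{\vD}
\]
for an arbitrary $\phi$-twisted $\G$-$\Cst$-algebra $B$, since the functor $S^{1,1}(\blank)$ commutes with $(\blank) \hotimes \lu\phi\Kop_{\G}^{c,\overline{\tau}}$.

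The forward Bott map $\beta_{*}$ is defined as an exterior product with a Bott generator $\beta \in \lu\phi \K_{0}^{\G}(S^{1,1})_{\vD} \cong \zahl$. Writing $S^{1,1} = C_{0}(\real_{\xi} \oplus \real^{\mathrm{op}}_{\eta})$, I realize $\beta$ as the class of an explicit graded $\G$-invariant odd symmetry on $\hat\Mop_{2,2}(S^{1,1})^{+}$ built from the coordinate functions, of the schematic form $\theta(\xi,\eta) = (\xi\, v_{1} + \eta\, v_{2})/\sqrt{1+\xi^{2}+\eta^{2}}$, suitably corrected near infinity so as to be an odd symmetry lifting to a class in the van Daele group; the Real structure of $S^{1,1}$ forces $v_{1}, v_{2}$ to be chosen as appropriate graded odd symmetries in $\hat\Mop_{2,2}$ (of the types $u,v$ already singled out in the paper). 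On representatives $s \in \lu\phi F_{c,\mathscr{V}}^{\G}(\hat\Mop_{2,2}^{c}(B^{+}))$ one then sets $\beta_{*}[s] := [s \hotimes \beta]$, checking well-definedness via Lemma \ref{lem:sym}. For the inverse $\alpha_{*}$, I would employ a $\phi$-twisted $\G$-equivariant Toeplitz (Wiener--Hopf) extension
\[
0 \to S^{1,1} B \hotimes \Kop \to \mathcal{T}(B) \to S^{1,0} B \to 0,
\]
together with the ordinary-suspension isomorphism $\lu\phi \K_{1}^{\G}(S^{1,0}B)_{\vD} \cong \lu\phi \K_{1}^{\G}(B)_{\vD}$ obtained by applying Lemma \ref{lem:vDisK}(4) to the cone extension of $B$; composing the boundary map of the Toeplitz extension with this suspension isomorphism yields $\alpha_{*}\colon \lu\phi \K_{1}^{\G}(S^{1,1}B)_{\vD} \to \lu\phi \K_{1}^{\G}(B)_{\vD}$.

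The main obstacle is then verifying that $\alpha_{*} \circ \beta_{*} = \id$ and $\beta_{*} \circ \alpha_{*} = \id$, which amounts to computing the Toeplitz index of the symbol determined by $\beta$ and checking that it produces $\pm 1$ on the Bott generator—this is the essential content of complex Bott periodicity. Building the equivariant Toeplitz algebra $\mathcal{T}(B)$ with a compatible $\phi$-twisted $\G$-action and Real structure is a technical step but works because $\G$ is a proper groupoid with compact orbit space and the AFGP hypothesis lets one decompose the analysis into finite-dimensional pieces in which standard Fredholm index computations apply. Once the index computation is established on the generator, naturality of the exterior product and the half-exactness established in Lemma \ref{lem:vDisK} propagate the isomorphism to all classes, completing the proof.
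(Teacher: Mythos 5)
Your strategy has some of the right ingredients (a Toeplitz extension plus the long exact sequence coming from Lemma \ref{lem:vDisK}), but two steps are genuinely broken. First, the ``ordinary-suspension isomorphism'' $\lu \phi \K _{1,c,\tau}^\G(S^{1,0}B)_\vD \cong \lu \phi \K _{1,c,\tau}^\G(B)_\vD$ that you propose to extract from Lemma \ref{lem:vDisK}(4) applied to the cone extension does not exist: half-exactness and homotopy invariance applied to $0 \to S^{1,0}B \to CB \to B \to 0$ only produce an exact sequence whose terms at $CB$ vanish, and the resulting boundary isomorphism identifies $\lu \phi \K _{1,c,\tau}^\G(S^{1,0}B)_\vD$ with itself; it gives no comparison with $\lu \phi \K _{1,c,\tau}^\G(B)_\vD$. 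Indeed, by the paper's indexing $\lu \phi \K _{1,c,\tau}^\G(S^{1,0}B)_\vD$ is the degree-$2$ group of $B$, and a natural isomorphism with the degree-$1$ group fails already for $B=\real$ with trivial $\G$ and trivial twist, since real $\K$-theory is not $1$-periodic; asserting it amounts to assuming a periodicity statement of the same strength as the one being proved. Second, your Toeplitz extension is misconfigured: the relevant extension has ideal $A \hotimes \Kop$ and quotient $S^{0,1}A$ (the Real structure inherited from the real unilateral shift puts the involution on the circle, so the quotient is $S^{0,1}$, not $S^{1,0}$), and the boundary map of the long exact sequence then runs from $\lu \phi \K _{1,c,\tau}^\G(S(S^{0,1}A))_\vD = \lu \phi \K _{1,c,\tau}^\G(S^{1,1}A)_\vD$ to $\lu \phi \K _{1,c,\tau}^\G(A \hotimes \Kop)_\vD$, which is exactly the map one wants; with your ideal $S^{1,1}B \hotimes \Kop$ and quotient $S^{1,0}B$ the boundary has the wrong source and target and cannot yield a map $\lu \phi \K _{1,c,\tau}^\G(S^{1,1}B)_\vD \to \lu \phi \K _{1,c,\tau}^\G(B)_\vD$.

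Moreover, the actual content of your plan --- verifying $\alpha_*\beta_*=\id$ and $\beta_*\alpha_*=\id$ by a Toeplitz index computation and then propagating from the Bott generator by naturality --- is left unproved, and the propagation step would additionally require an exterior product or module structure on the van Daele groups compatible with the boundary maps, which has not been set up in this twisted equivariant framework. The paper's proof avoids all of this: Lemma \ref{lem:vDisK} shows that $\lu \phi \K _{1,c,\tau}^\G(A \hotimes \blank)_\vD$ is stable, homotopy invariant and half-exact, so Proposition 4.3 of \cite{MR750677} (Cuntz's abstract periodicity mechanism for such functors) gives $\lu \phi \K _{1,c,\tau}^\G(A \hotimes \mathcal{T}_{0,\real})_\vD =0$ for the reduced Real Toeplitz algebra $\mathcal{T}_{0,\real}=C^*(s-1)$; feeding $0 \to A \hotimes \Kop \to A\hotimes \mathcal{T}_{0,\real} \to S^{0,1}A \to 0$ into the long exact sequence then shows at once that the boundary map $\lu \phi \K _{1,c,\tau}^\G(S^{1,1}A)_\vD \to \lu \phi \K _{1,c,\tau}^\G(A)_\vD$ is a natural isomorphism, with no explicit Bott element and no index computation; your initial reduction via Lemma \ref{lem:vDdetw} is harmless but unnecessary.
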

\begin{proof}
Let $\mathcal{T}_{0,\real}$ be the Real reduced Toeplitz algebra $C^*(s-1)$ where $s \in \Bop (\ell ^2 _\real \zahl _{\geq 0})$ is the unilateral shift operator $(s \xi)_i:=\xi _{i-1}$. Then, the boundary homomorphism $\partial : \K _{1,c,\tau}(S^{1,1}A)_\vD \to \K _{1,c,\tau}(A)_\vD$ associated to the exact sequence
\[
0 \to A \hotimes \Kop \to A \hotimes \mathcal{T}_{0,\real} \to S^{0,1}A \to 0
\]
gives a natural homomorphism from $\lu \phi \K _{1,c,\tau}^\G(S^{1,1}\blank)_\vD$ to $\lu \phi \K _{1,c,\tau}^\G (\blank)_\vD$. Moreover, applying Proposition 4.3 of \cite{MR750677} for the functor $\lu \phi \K _{1,c,\tau}^\G (A \hotimes \blank )_\vD$, we obtain $\lu \phi \K _{1,c,\tau}^\G(A \hotimes \mathcal{T}_{0,\real})_\vD=0$ and consequently $\partial$ is an isomorphism. 
\end{proof}

Let $\lu \phi \K _{1+p-q,c,\tau}^\G (A)_\vD$ denote the group $\lu \phi \K _{1,c,\tau}^\G (S^{p,q}A)_\vD$, which depends only on the difference $p-q$. Together with Proposition \ref{prp:Bott}, we obtain the following.

\begin{cor}
Let $0 \to I \to A \to A/I \to 0$ be an exact sequence of $\phi$-twisted $\G$-$\Cst$-algebras. Then, we obtain the long exact sequence 
\ma{
\dots \to &\lu{\phi}\K_{2,c,\tau}^\G (A/I)_\vD \to \lu{\phi}\K _{1,c,\tau}^\G(I)_\vD \to \lu{\phi}\K_{1,c,\tau}^\G (A)_\vD \to \lu{\phi}\K_{1,c,\tau}^\G (A/I)_\vD\\
 \to &\lu{\phi}\K _{0,c,\tau}^\G(I)_\vD \to \lu{\phi}\K_{0,c,\tau}^\G (A)_\vD \to \lu{\phi}\K_{0,c,\tau}^\G (A/I)_\vD \to \lu{\phi}\K _{-1,c,\tau}^\G(I)_\vD \to \dots.
}
\end{cor}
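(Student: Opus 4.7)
The plan is to assemble the claimed bi-infinite long exact sequence from shifted copies of the one-sided long exact sequence established just before Proposition \ref{prp:Bott}, with Proposition \ref{prp:Bott} supplying the identification of indices. Since the suspension $S$ in that earlier sequence is $S^{1,0}$, the definition $\lu \phi \K_{1+p-q,c,\tau}^\G(\blank)_\vD := \lu \phi \K_{1,c,\tau}^\G(S^{p,q}\blank)_\vD$ rewrites it as
\[
\cdots \to \lu \phi \K_{n+1,c,\tau}^\G(A/I)_\vD \xrightarrow{\partial} \lu \phi \K_{n,c,\tau}^\G(I)_\vD \to \lu \phi \K_{n,c,\tau}^\G(A)_\vD \to \lu \phi \K_{n,c,\tau}^\G(A/I)_\vD
\]
for every $n \ge 1$, which already accounts for the portion of the claimed long exact sequence at non-negative indices.

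To fill in the lower indices, I would apply the original one-sided long exact sequence to the suspended short exact sequence $0 \to S^{0,q}I \to S^{0,q}A \to S^{0,q}(A/I) \to 0$, which remains exact because tensoring with $C_0(\real^q)$ is exact. Using Proposition \ref{prp:Bott} in its equivalent form $\lu \phi \K_{1,c,\tau}^\G(S^{p+1,q+1}X)_\vD \cong \lu \phi \K_{1,c,\tau}^\G(S^{p,q}X)_\vD$, the group $\lu \phi \K_{1,c,\tau}^\G(S^{1,0}\cdot S^{0,q}X)_\vD$ is identified with $\lu \phi \K_{1,c,\tau}^\G(S^{0,q-1}X)_\vD \cong \lu \phi \K_{2-q,c,\tau}^\G(X)_\vD$, so the resulting six-term sequence supplies the piece
\[
\lu \phi \K_{2-q,c,\tau}^\G(A/I)_\vD \xrightarrow{\partial} \lu \phi \K_{1-q,c,\tau}^\G(I)_\vD \to \lu \phi \K_{1-q,c,\tau}^\G(A)_\vD \to \lu \phi \K_{1-q,c,\tau}^\G(A/I)_\vD.
\]
Letting $q$ range over $1, 2, 3, \dots$ covers every remaining index and concatenates into the bi-infinite sequence.

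The main point to verify is compatibility: the boundary map $\lu \phi \K_{n,c,\tau}^\G(A/I)_\vD \to \lu \phi \K_{n-1,c,\tau}^\G(I)_\vD$ extracted from two different values of $q$ must agree under the Bott identification, and the functorial arrows in the three-term pieces must match across overlapping segments. The latter is automatic from functoriality; the former reduces to the naturality of the Puppe-type construction of Proposition 4.1 of \cite{MR750677} under the exact endofunctor $S^{0,1}\otimes \blank$ on short exact sequences, together with the fact that the Bott isomorphism in Proposition \ref{prp:Bott} is itself realized by the Kasparov boundary map for the Toeplitz extension and is therefore a natural transformation that commutes, up to the standard sign, with every other connecting morphism. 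Once this naturality check --- a routine diagram chase --- is carried out, the bi-infinite long exact sequence assembles directly from the pieces and exactness at every position holds because it holds in at least one of the assembled six-term segments.
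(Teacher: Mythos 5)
Your proposal is correct and takes the same approach the paper intends: the paper's entire proof is the single sentence ``Together with Proposition \ref{prp:Bott}, we obtain the following,'' meaning precisely the one-sided Puppe long exact sequence from Lemma \ref{lem:vDisK} via Schochet's machinery, suspended by $S^{0,q}$ for $q=0,1,2,\dots$ and reindexed through the $S^{1,1}$-periodicity of Proposition \ref{prp:Bott}. Your write-up fills in exactly the details the paper leaves implicit, including the compatibility check that the Bott isomorphism (itself a connecting map for the Toeplitz extension) commutes with the Puppe boundary maps, which is indeed the only point requiring any argument beyond bookkeeping.
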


Finally we relate $\lu \phi \K _{1,c,\tau}^\G(\blank)_\vD$ with Definition \ref{def:twK}. For a $\phi$-twisted $\Zt$-graded $\G$-$\Cst$-algebra $A$, let $M_s(A)$ denote the stable multiplier algebra $M(A \hotimes  \Kop _\G)$ and let $Q_s(A)$ denote the stable corona algebra $M_s(A)/(A \hotimes \Kop _\G)$.

\begin{lem}\label{lem:Kuip}
Let $A$ be a $\phi$-twisted $\Zt$-graded $\G$-$\Cst$-algebra. Then, we have $\lu{\phi}\K_{*,c,\tau}^\G(M_s(A))_{\vD}=0$ for any $\ast \in \zahl$.
\end{lem}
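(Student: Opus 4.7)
The plan is to execute an Eilenberg swindle built from the Kasparov absorption property~(\ref{form:Kas}) for the regular representation $\lu 0 \Hilb_\G$.

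First, applying~(\ref{form:Kas}) with $E = \bigoplus_{i\geq 2} \lu 0 \Hilb_\G$ produces a countable family of $\G$-equivariant even isometries $V_1, V_2, \ldots \in \Lop(\lu 0 \Hilb_\G) = M(\Kop_\G) \subset M_s(A)$ satisfying $\sum_i V_i V_i^* = 1$ in the strict topology. The formula $\phi_\infty(a) := \sum_i V_i a V_i^*$ then defines a $\G$-equivariant even unital $\ast$-endomorphism of $M_s(A)$, which extends coordinate-wise to $\hat{\Mop}_{2,2}^c(M_s(A)^+) \hotimes \Kop(\mathscr{V})$ for any finite-dimensional $(\phi,c,\tau)$-twisted representation $\mathscr{V}$ and preserves odd graded $\G$-invariant symmetries. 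Setting the shift $S := \sum_i V_{i+1} V_i^*$, a direct check gives $S^*S = 1$ and $SS^* = 1 - V_1V_1^*$, so that the pair $(V_1, S)$ assembles into a $\G$-equivariant even unitary $U: \lu 0 \Hilb_\G^{\oplus 2} \xra{\cong} \lu 0 \Hilb_\G$.

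The heart of the argument is the swindle identity, verified by direct computation using $V_i^* V_j = \delta_{ij}$:
\ma{
U(s \oplus \phi_\infty(s)) U^* = V_1 s V_1^* + S \phi_\infty(s) S^* = \sum_i V_i s V_i^* = \phi_\infty(s)
}
for any odd $\G$-invariant symmetry $s \in \lu \phi F^\G_{c,\mathscr{V}}(\hat{\Mop}_{2,2}^c(M_s(A)^+))$. Since $\lu 0 \Hilb_\G \otimes \mathscr{V} \cong \lu \phi \Hilb_\G^{c,\tau} \cong \lu 0 \Hilb_\G \otimes (\mathscr{V} \oplus \mathscr{V})$ by Kasparov absorption, after stabilization by $\Kop(\lu \phi \Hilb_\G^{c,\tau})$ the operator $U$ becomes an honest inner $\G$-equivariant even unitary commuting with the base symmetry $v \in \hat{\Mop}_{2,2}^c$. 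Remark~\ref{rmk:indep} then shows that conjugation by $U$ acts trivially on $\lu \phi \bK^\G_{c,\tau}(\hat{\Mop}_{2,2}^c(M_s(A)^+))_v$, and Lemma~\ref{lem:vDisK}(3) allows us to descend the resulting identity back to the original level of the colimit. Consequently $[s \oplus \phi_\infty(s)] = [\phi_\infty(s)]$ in $\lu \phi \K^\G_{1,c,\tau}(M_s(A))_\vD$, while the semigroup law gives $[s \oplus \phi_\infty(s)] = [s] + [\phi_\infty(s)]$; cancellation in the abelian group forces $[s] = 0$.

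For general $\ast \in \zahl$, the same isometries $V_i$ (acting trivially on the $S^{p,q}$ factor) define a swindle endomorphism of $S^{p,q} \hotimes M_s(A)$, and the argument above yields $\lu \phi \K^\G_{1,c,\tau}(S^{p,q} M_s(A))_\vD = 0 = \lu \phi \K^\G_{1+p-q,c,\tau}(M_s(A))_\vD$. The main technical delicacy is that $U$ is a priori a unitary \emph{between different Hilbert $A$-modules}, not an inner unitary of a single algebra; this is precisely what the equivariant stability of Lemma~\ref{lem:vDisK}(3) is used to resolve, by passing to the common stage $\Kop(\lu \phi \Hilb_\G^{c,\tau})$ of the colimit defining $\bK$ and then descending.
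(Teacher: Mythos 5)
Your overall strategy is the same Eilenberg swindle that the paper itself uses inside the stable multiplier algebra (the paper implements it with half-infinite configurations $\sum_{n<0}V_nsV_n^*\oplus s\oplus\sum_{n>0}V_nv_0V_n^*$ and a homotopy coming from connectedness of $\mathcal{U}\Bop(\ell^2\zahl)$, then cancels in the group), and most of your ingredients are fine: the equivariant even isometries $V_i\in\Lop(\lu0\Hilb_\G)=M(\Kop_\G)\subset M_s(A)$, the strict convergence of $\phi_\infty$, its equivariance and unitality, and the algebraic identity $V_1sV_1^*+S\phi_\infty(s)S^*=\phi_\infty(s)$. However, the step where you pass from this identity to $[s]+[\phi_\infty(s)]=[\phi_\infty(s)]$ has a genuine gap. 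The addition in $\lu\phi\bK_{c,\tau}^\G(\cdot)_v$ is \emph{defined} by the external direct sum in the finite-dimensional twisted-representation direction, $F_{c,\mathscr{V}}\times F_{c,\mathscr{W}}\to F_{c,\mathscr{V}\oplus\mathscr{W}}$, whereas your swindle sum $V_1sV_1^*+S\phi_\infty(s)S^*$ is an \emph{internal} orthogonal sum along a decomposition of $1\in M(\Kop_\G)\subset M_s(A)$ with $\mathscr{V}$ unchanged. Identifying the two is exactly the point that needs proof, and the tools you cite do not supply it: Lemma \ref{lem:vDisK}(3) concerns enlarging $\mathscr{V}$ inside the colimit, and $\Kop(\lu\phi\Hilb_\G^{c,\tau})$ is not a stage of that colimit (which runs over finite-dimensional representations); moreover, even after such a stabilization you would still be comparing a sum formed inside $M_s(A)$ with the externally defined group law. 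Also, Remark \ref{rmk:indep} as invoked is slightly off target: once you fix any even equivariant identification $\lu0\Hilb_\G^{\oplus2}\cong\lu0\Hilb_\G$, your $U$ already becomes an inner unitary of the unital algebra without any stabilization; the remaining problem is precisely that the class of the internally summed element, for \emph{some} choice of identification, must be shown to equal $[s]+[\phi_\infty(s)]$.

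The gap is reparable within the paper's toolkit, so the approach is not wrong, only incomplete. One way: for $\G$-invariant even isometries $W_1,W_2\in M(\Kop_\G)$ with $W_1W_1^*+W_2W_2^*=1$ (they commute with the base symmetry $v$), set $T:=\pmx{W_1 & W_2\\ 0 & 0}$, note $T^*T=1$, $TT^*=\pmx{1&0\\0&0}$, dilate $T$ to the even $\G$-invariant unitary $\pmx{T & 1-TT^*\\ 0 & T^*}$ commuting with $v$, and apply (an extension of) Remark \ref{rmk:indep}; conjugating $(x\oplus y)\oplus(v\oplus v)$ by this unitary yields $(W_1xW_1^*+W_2yW_2^*)\oplus v\oplus v\oplus v$, giving $[W_1xW_1^*+W_2yW_2^*]=[x]+[y]$, after which your cancellation argument closes, including for the suspensions $S^{p,q}M_s(A)$. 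It is worth noting how the paper sidesteps this entirely: it keeps its three blocks as an external direct sum over $\mathscr{V}^{\oplus3}$ from the outset, absorbs all infinite multiplicity into $M_s(A)$ via fixed unitary identifications, and implements the shift by a norm-continuous path of equivariant even unitaries, so the unidentified infinite-multiplicity classes cancel and no internal-versus-external comparison ever arises; that is the main economy of the paper's version relative to yours.
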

\begin{proof}
If suffices to show that $[s]=0$ for any $s \in \lu \phi F_{c,\mathscr{V}}^\G((\hat{\Mop}_{2,2}^c((S^{p,q}M_s(A))^+))$. 
Let $V_n: \real \to \ell ^2 \zahl $ be isometries onto $\comp \cdot \delta _n$. We identify $\Hilb_{\G,A} \otimes \ell ^2\zahl _{>0}$ and $\Hilb_{\G,A} \otimes \ell ^2\zahl _{>0}$ with $\Hilb_{\G,A}$ by a unitary equivalence. Since the bilateral shift operator is homotopic to the identity in $\mathcal{U}\Bop (\ell ^2 \zahl)$, there is a homotopy
\[
\sum _{n<0} V_n s V_n^* \oplus s \oplus \sum_{n>0} V_n v_0 V_n^* \sim \sum _{n<0} V_n s V_n ^* \oplus v_0 \oplus \sum _{n >0} V_n v_0 V_n^*
\]
in $\lu \phi F _{c,V^3}^\G (\hat{\Mop} _{2,2}^c((S^{p,q}M_s(A))^+))$. This implies that
\[
[\sum _{n<0} V_n s V_n^*]+[s]+[\sum_{n>0} V_n v_0 V_n^*]=[\sum _{n<0} V_n s V_n^*]+[v_0]+[\sum_{n>0} V_n v_0 V_n^*]
\]
and hence $[s]=[v_0]=0$ in $\lu \phi \K _{1+p-q,c,\tau}^\G(M_s(A))$.
\end{proof}

\begin{thm}\label{thm:KvD}
There is a natural isomorphism $\lu \phi \K _{0,c,\tau}^\G(SA) \to \lu \phi \K_{1,c,\tau}^\G (A)_\vD $.
\end{thm}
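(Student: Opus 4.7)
The plan is to construct a natural transformation $\mu \colon \lu \phi \K_{0,c,\tau}^\G(SA) \to \lu \phi \K_{1,c,\tau}^\G(A)_\vD$ and show it is an isomorphism via the five lemma applied to the Toeplitz-type extension. By Proposition \ref{prp:twK}, represent a class in $\lu \phi \K_{0,c,\tau}^\G(SA)$ by a $\G$-equivariant $\ast$-homomorphism $\varphi \colon \Salg \to SA \hotimes \lu \phi \Kop_\G^{c,\tau}$, or equivalently by a $\G$-invariant unitary $U \in (SA \hotimes \lu \phi \Kop_\G^{c,\tau})^+$ with $\gamma(U) = U^\ast$ satisfying the compatibility noted after Proposition \ref{prp:twK}. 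Viewing $U$ as a norm-continuous path $\{U_t\}_{t \in \real}$ in the unitary group of $(A \hotimes \lu \phi \Kop_\G^{c,\tau})^+$ with $U_{\pm \infty} = 1$, define $\mu([\varphi])$ to be the class of the $\G$-invariant odd symmetry in $\hat \Mop_{2,2}^c(A \hotimes \lu \phi \Kop_\G^{c,\tau})$ obtained by conjugating the base odd symmetry $v$ of $\hat \Mop_{2,2}^c$ by a suitable even unitary built from $U_0$ (for instance of the form $\diag(U_0, 1)$ in an appropriate block decomposition). By Lemma \ref{lem:vDdetw} this produces an element of $\lu \phi \K_{1,c,\tau}^\G(A)_\vD$.

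Next, verify that $\mu$ is well-defined on homotopy classes, additive under direct sums, and natural in $A$. More substantively, check that $\mu$ intertwines the boundary homomorphisms of the long exact sequences on both sides. The Kasparov-side sequence is the standard one obtained from $\lu \phi \KK^\G_{c,\tau}(\real, S\blank)$, while the van Daele-side sequence is provided by Lemma \ref{lem:vDisK}(4) together with Proposition \ref{prp:Bott}. This intertwining is to be verified by comparing the Kasparov product with a Toeplitz extension class on the one side and the explicit exponential-lifting formula for the van Daele boundary on the other.

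To conclude that $\mu$ is an isomorphism, apply the five lemma to the long exact sequences associated with the Toeplitz extension $0 \to A \hotimes \Kop_\G \to A \hotimes \mathcal{T}_{0,\real} \to SA \to 0$. The argument in the proof of Proposition \ref{prp:Bott} yields $\lu \phi \K_{\ast,c,\tau}^\G(A \hotimes \mathcal{T}_{0,\real})_\vD = 0$, while a parallel Kuiper-type argument in the spirit of Lemma \ref{lem:Kuip} gives the vanishing of the corresponding Kasparov-side groups. Both theories thus collapse to their values on $A \hotimes \Kop_\G$, where equivariant stability and the explicit definition of $\mu$ identify the two groups directly. The main obstacle is the boundary-compatibility step in the second paragraph, since the Kasparov and van Daele boundary maps are constructed by rather different means, and tracking the $(\phi,c,\tau)$-equivariance through a Kasparov product on one side and an exponential lifting on the other requires careful bookkeeping.
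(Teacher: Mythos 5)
Your construction of $\mu$ does not work as stated, and the point where it fails is precisely where the mathematical content of the theorem lies. A class in $\lu \phi \K_{0,c,\tau}^\G(SA)$ is, via Proposition \ref{prp:twK}, a unitary $U \in (SA \hotimes \lu \phi \Kop_\G^{c,\tau})^+$ with $U-1$ in the ideal, $\gamma(U)=U^*$ and $\alpha_g(U)=\gamma^{c(g)+\phi(g)}(U)$; viewed as a path $U_t$ with $U_{\pm\infty}=1$, these constraints hold for every $U_t$ separately (the grading and the $\G$-action do not move the suspension variable), so the path itself connects $U_0$ to $1$ inside the constrained unitary group. Consequently any odd symmetry obtained by conjugating the base point $v$ by a unitary built functorially from $U_0$ is homotopic to $v$ through admissible symmetries, and your $\mu$ vanishes on every class: evaluation at a single point of the suspension variable cannot implement a suspension-type isomorphism. (In addition, ``a suitable even unitary built from $U_0$'' is never specified; note $U_0$ is not even, since $\gamma(U_0)=U_0^*$.) What is needed instead is an index-type mechanism that remembers the whole path -- a boundary map of an extension, an exponential, or a corona-algebra lifting -- and this is exactly what the paper supplies.

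The second half of your plan is also incomplete in a way that cannot be repaired by bookkeeping. After invoking the Toeplitz extension and the vanishing of both theories on $A \hotimes \mathcal{T}_{0,\real}$, you assert that ``equivariant stability and the explicit definition of $\mu$ identify the two groups directly'' on $A \hotimes \Kop_\G$; but identifying the Fredholm picture $\lu\phi\KK^\G_{c,\tau}(\real,\blank)$ with the van Daele picture for even one algebra is the theorem itself (up to a suspension), so the argument is circular as written -- the five lemma only transports an isomorphism you have not yet established anywhere. The paper's route is different and fills exactly this hole: it first proves $\lu\phi\KK^\G_{c,\tau}(\real,A) \cong \lu\phi\K_{1,c,\tau}^\G(Q_s(A))_\vD$ by sending an odd symmetry $s$ of the stable corona algebra to the Kasparov bimodule $(\hat\real^{2,2}_c \hotimes \Hilb_{\G,A}\hotimes\mathscr{V},1,\tilde s)$ built from a self-adjoint multiplier lift $\tilde s$, with surjectivity from the stabilization theorem \eqref{form:Kas} and injectivity from the Kuiper-type vanishing $\lu\phi\K_{*,c,\tau}^\G(M_s(A))_\vD=0$ (Lemma \ref{lem:Kuip}); it then composes $S\Phi_A^{-1}$ with the boundary map of $0 \to A\hotimes\Kop_\G \to M_s(A) \to Q_s(A) \to 0$, using Lemma \ref{lem:Kuip} again to compare $SQ_s(A)$ with $Q_s(SA)$, and concludes by equivariant stability (Lemma \ref{lem:vDisK}(3)). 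If you want to salvage your outline, you must replace both the definition of $\mu$ and the ``direct identification'' step by a construction of this kind.
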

\begin{proof}
This proof is based on the idea commented in Section 2 of \cite{MR2082096}. First we claim that $\lu \phi \KK _{c,\tau}^\G (\real , A)$ is isomorphic to the twisted equivariant $\K$-group $\lu \phi \K _{1,c,\tau}^\G(Q_s(A))_\vD$. Let $s$ be an element in $\lu \phi F_{c,V}^\G(Q_s(A))$. When we choose a self-adjoint lift $\tilde{s} \in \hat{\Mop}_{2,2}^c (M_s(A)) \hotimes \Kop (\mathscr{V})$ of $s$, then $(\hat {\real}^{2,2}_c \hotimes \Hilb _{\G,A} \hotimes \mathscr{V},1, \tilde{s})$ is a Kasparov $\real$-$A$ bimodule. The correspondence $[s] \mapsto [\hat {\real}^{2,2}_c \hotimes \Hilb _{\G,A} \hotimes \mathscr{V},1, \tilde{s}]$ determines a well-defined group homomorphism $\Phi_A: \lu \phi \K _{1,c,\tau}^\G(Q_sA)_\vD \to \lu \phi \KK ^\G _{c,\tau }(\real ,A)_{\mathrm{oh}}$.

Surjectivity of $\Phi_A$ follows from the Kasparov stabilization theorem (\ref{form:Kas}) and the fact that $\hat{\real}^{2,2}_c \hotimes \Hilb _{\G,A} \hotimes \mathscr{V} \cong \lu \phi \Hilb _{\G,A}^{c,\tau}$. 
Actually, every element in $\lu \phi \KK _{c,\tau}^\G (\real , A)$ is represented by a Kasparov bimodule of the form $[\lu \phi \Hilb_{\G,A}^{c,\tau},1,F]$. To see that $\Phi_A$ is injective, it suffices to show that $s \in \lu \phi F_{c,\mathscr{V}}^\G(A)$ satisfies $[s]=0$ if there is a degenerate Kasparov bimodule $[E,1,F]$ such that $\tilde{s} \oplus F$ is homotopic to a super $\G$-invariant odd symmetry in the space of Fredholm operators in $\Lop (\lu \phi \Hilb_{\G,A}^{c,\tau} \oplus E)$. By (\ref{form:Kas}), we may replace $E$ with $\Hilb _{\G,A}^{c,\tau}$. Hence the assumption means that $[s]+[\pi (F)]$ is in the image of $\pi_*:\lu \phi \K _{1,c,\tau}^\G (M_s(A))_\vD \to \lu \phi \K _{1,c,\tau}^\G (Q_s(A))_\vD$ and we obtain $[s]=0$ by Lemma \ref{lem:Kuip}.

Consider the map between long exact sequences 
\[
\xymatrix{
0 \ar[r] & SA \hotimes \Kop _\G \ar[r] \ar@{=}[d] & SM_s(A) \ar[d] \ar[r] & SQ_s(A) \ar[r]\ar[d]  &0\\0 \ar[r] & SA \hotimes \Kop _\G \ar[r] & M_s(SA) \ar[r] & Q_s(SA) \ar[r] &0
}
\]
and apply Lemma \ref{lem:Kuip}. Then we obtain that the canonical inclusion $SQ_s(A) \to Q_s(SA)$ induces an isomorphism $\lu \phi \K_{1,c,\tau}^\G(SQ_s(A))_\vD \cong \lu \phi \K_{1,c,\tau}^\G(Q_s(SA))_\vD$. Finally we obtain the isomorphism
\[
\Psi_A: \lu \phi \KK _{c,\tau}^\G (\real , SA) \xra{S\Phi _A^{-1}} \lu \phi \K _{1,c,\tau}^\G (SQ_sA)_\vD \xra{\partial} \lu \phi \K_{1,c,\tau}^\G (A \hotimes \Kop _\G)_\vD
\]
such that all of these homomorphisms are natural. 
\end{proof}

As a consequence of Theorem \ref{thm:KvD}, we obtain the isomorphism 
\[
\beta _A:=\Psi_A \circ \beta \circ \Psi_A ^{-1}  : \lu \phi \K _{1,c,\tau}^\G(A)_\vD \to \lu \phi \K _{1,c,\tau}^\G(SA \hotimes \Cl _{0,1})_\vD .
\]
where $\beta :=[C_0(\real) \hotimes \Cl _{0,1},1,x(1+x^2)^{-1/2}e] \in \KK ^\G(\real , S\Cl _{0,1})$ is the Bott element. On the other hand, a natural isomorphism $\Xi _A$ from $\lu \phi \K _{1,c,\tau}^\G(A)_\vD$ to $\lu \phi \K _{1,c,\tau}^\G(SA \hotimes \Cl _{0,1})_\vD$ is given in Theorem 2.14 of \cite{MR961241} as $\Xi _A[s]:= [z(s;t)]$ where 
\ma{z(s;t)&:= \Ad ((1+ve)/ \sqrt{2}) \circ \Ad \nu (s;t) \circ \Ad \nu (v;-t) (e),\\
\nu (x;t)&:= \cos (\pi t /2) +xe \sin (\pi t /2).}
Actually, these isomorphisms coincide. To see this, let $s$ be an odd $c$-graded $\G$-invariant symmetry in $Q_s(A) \hotimes \Kop (\mathscr{V})$ and let $\tilde{s}$ be a self-adjoint lift of $s$ in $\hat \Mop _{2,2}^c M_s(A) \hotimes \Kop (\mathscr{V})$. Then,  
\ma{[ \Hilb _{\G,SA \hotimes \Cl _{0,1}} ^{c,\tau}, 1, z(\tilde{s};t)] &=[\Hilb _{\G,SA \hotimes \Cl _{0,1}} ^{c,\tau}, 1, \Ad \nu (\tilde{s};t) \circ \Ad \nu (v;-t) (e)] \\
&=[ \Hilb _{\G,SA \hotimes \Cl _{0,1}} ^{c,\tau}, 1, \Ad \nu (\tilde{s};t) (e)]\\
&=[ \Hilb _{\G,SA \hotimes \Cl _{0,1}} ^{c,\tau}, 1, e \cos (\pi t) + \tilde{s} \sin (\pi t) ]}
and the operator $e \cos (\pi t) + \tilde{s} \sin (\pi t)$ satisfies (\ref{form:KK}) with respect to the Kasparov product $[ \Hilb _{\G,A} ^{c,\tau}, 1, \tilde{s}] \otimes _\real \beta$. Consequently we obtain $\Phi _{SA \hotimes \Cl _{0,1}}  \Xi_{Q_s(A)} [s]= (\beta \otimes _\real (\Phi _A [s]))$, which implies $\Xi =\beta _A$ by naturality of $\Xi$.

Moreover, there is another canonical identification of these groups. When we write a $c$-graded $\G$-invariant odd symmetry in $(\hat \Mop _{2,2}^c(A) \hotimes \Kop (\mathscr{V})) \hotimes \Cl _{0,1}$ as $s_0+s_1e$ where $e$ is the odd self-adjoint generator of $\Cl _{0,1}$, then $s_0$ and $s_1$ are self-adjoint element such that $s_0$ is odd, $s_1$ is even and satisfy $s_0s_1=s_1s_0$ and $s_0^2+s_1^2=1$. Therefore, by Proposition \ref{prp:twK}, the correspondence $(s_1 +is_0) \mapsto (s_0+es_1)$ induces the natural isomorphism $\Theta_A$ from $\lu \phi \K _{0,c,\tau}^\G (A)$ to $\lu \phi \bK _{c,\tau}^\G (A \hotimes \Cl _{0,1})_e$ (which turns out to be isomorphic to $\lu \phi \K _{1,c,\tau}^\G(A \hotimes \Cl_{0,1})$ by Proposition \ref{prp:indep}). 

\begin{lem}\label{lem:compati}
The isomorphisms $\Psi_{A \hotimes \Cl _{0,1}} \circ \beta$ and $\Theta _A$ coincide.
\end{lem}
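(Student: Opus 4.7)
The plan is to trace a concrete representative through both compositions and verify the outputs agree. Via Proposition~\ref{prp:twK}, represent $[\varphi] \in \lu \phi \K_{0,c,\tau}^\G(A)$ by a $\G$-equivariant graded $\ast$-homomorphism $\varphi \colon \Salg \to A \hotimes \lu \phi \Kop_\G^{c,\tau}$, and set $\tilde s := \bar \varphi(x(1+x^2)^{-1/2}) \in M_s(A)$, the canonical odd self-adjoint bounded transform, which satisfies $\tilde s^2 - 1 \in A \hotimes \Kop_\G$. The corresponding Kasparov bimodule is $[\lu \phi \Hilb_{\G,A}^{c,\tau}, 1, \tilde s]$, while under the unitary correspondence recalled just before Proposition~\ref{prp:twK}, $\varphi$ yields $u = -\exp(\pi i \tilde s) = -\cos(\pi \tilde s) - i \sin(\pi \tilde s) \in (A \hotimes \lu \phi \Kop_\G^{c,\tau})^+$. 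Writing $u = s_1 + i s_0$, so $s_1 = -\cos(\pi \tilde s)$ (even) and $s_0 = -\sin(\pi \tilde s)$ (odd), one gets $\Theta_A[\varphi] = [s_0 + e s_1]$.

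For $\Psi_{A \hotimes \Cl_{0,1}} \circ \beta$, the paragraph immediately preceding the statement has already exhibited $\beta \cdot [\varphi]$ as the class of the Kasparov bimodule $[\lu \phi \Hilb^{c,\tau}_{\G, SA \hotimes \Cl_{0,1}}, 1, F]$ with $F(t) := e \cos(\pi t) + \tilde s \sin(\pi t)$. Applying $\Phi^{-1}_{SA \hotimes \Cl_{0,1}}$ sends this to the class of the path $g(t) := \pi(F(t)) = e \cos(\pi t) + s \sin(\pi t)$, $s := \pi(\tilde s)$, in $\lu \phi \K_{1,c,\tau}^\G(SQ_s(A \hotimes \Cl_{0,1}))_\vD$; one then applies the boundary $\partial$ induced by the short exact sequence $0 \to A \hotimes \Cl_{0,1} \hotimes \Kop_\G \to M_s(A \hotimes \Cl_{0,1}) \to Q_s(A \hotimes \Cl_{0,1}) \to 0$.

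The key computation is to evaluate $\partial[g]$ explicitly via the prescription of Proposition~4.7 of~\cite{MR947500}. A natural self-adjoint lift of $g(t)$ to $SM_s(A \hotimes \Cl_{0,1})$ is $F(t)$ itself, whose defect from being a symmetry equals $F(t)^2 - 1 = \sin^2(\pi t)(\tilde s^2 - 1) \in SA \hotimes \Cl_{0,1} \hotimes \Kop_\G$. Since $\tilde s$ and $e$ graded-anticommute and $(\tilde s e)^2 \equiv -1$ modulo compacts, $F(t)$ is (up to compacts) the rotation $\exp(\pi t \cdot \tilde s e) \cdot e$ in the plane spanned by $e$ and $\tilde s e$. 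The van Daele boundary then reads off the symmetry obtained from the ``angle variable'' of this rotation at the endpoint, which by functional calculus gives $-\sin(\pi \tilde s) - e \cos(\pi \tilde s) = s_0 + e s_1$.

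The main obstacle is the explicit identification of $\partial$ with the functional-calculus formula. If a direct calculation is too cumbersome, one can alternatively invoke naturality: both $\Theta_A$ and $\Psi_{A \hotimes \Cl_{0,1}} \circ \beta$ are natural isomorphisms between the functors $\lu \phi \K_{0,c,\tau}^\G(\blank)$ and $\lu \phi \K_{1,c,\tau}^\G(\blank \hotimes \Cl_{0,1})_\vD$, so it suffices to check agreement on a universal representative (e.g., the one coming from $\id_\Salg$ in the universal algebra $\Salg$ with its canonical class) where the computation reduces to a single explicit operator homotopy in $\Cl_{0,1}$.
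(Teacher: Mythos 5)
Your computation of the $\Theta_A$ side and your use of the Kasparov-product representative $F(t)=e\cos(\pi t)+\tilde s\sin(\pi t)$ follow the paper, but the one step that carries all the content --- evaluating the van Daele boundary $\partial$ on this class --- is asserted rather than derived, and as you have set it up it is not even a legal application of $\partial$. The path $g(t)=\pi(F(t))$ runs from $e$ at $t=0$ to $-e$ at $t=1$; since $e\in\Cl_{0,1}\subset Q_s(A\hotimes\Cl_{0,1})$ is not a scalar of the unitization, $g$ does not define an odd symmetry in $\hat{\Mop}_{2,2}^c\bigl((SQ_s(A\hotimes\Cl_{0,1}))^+\bigr)\hotimes\Kop(\mathscr{V})$: a suspension representative must take one and the same coefficient value at both endpoints. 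Worse, if one applies the lift-and-evaluate prescription of Proposition 4.7 of \cite{MR947500} literally to your proposed lift $F(t)$, the endpoint is the honest symmetry $-e$ and one reads off a degenerate class, not $[-\sin(\pi\tilde s)-e\cos(\pi\tilde s)]$. The entire nontrivial content sits in the basepoint correction by the auxiliary odd symmetry $v$ --- van Daele's factor $\nu(v;-t)$ in $z(s;t)$, equivalently the conjugation $\Ad\bigl(\tfrac{1}{\sqrt2}(1+ve)\bigr)$, which is also missing from your final formula --- and, once the representative is corrected, computing $\partial$ of it is exactly Proposition 3.4 of \cite{MR961241}. This is how the paper argues: it first identifies $\beta_A$ with van Daele's map $\Xi_A$ (the Kasparov-product computation you quote is part of that identification), and then cites van Daele's formula $\partial\circ\Xi_A[F]=[-\Ad(\tfrac1{\sqrt2}(1+ve))(\sin\pi F+e\cos\pi F)]$, which is compared with $\Theta_A[-\exp(\pi i F)]$ using Remark \ref{rmk:indep}. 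Your ``reads off the angle variable'' sentence is a restatement of the desired conclusion, not a proof of it.

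The naturality fallback does not close the gap either. $\Salg$ is not itself a $(\phi,c,\tau)$-twisted $\G$-$\Cst$-algebra, so reducing to a ``universal'' class forces you through $A\hotimes\lu\phi\Kop_\G^{c,\tau}$ and through the stability identifications used in Proposition \ref{prp:twK} and Lemma \ref{lem:vDdetw}, whose compatibility with both $\Theta$ and $\Psi\circ\beta$ would itself have to be checked; and even granting that, the universal case is again the same exponential/boundary computation (the case $A=\Salg$ with trivial $\G$ of van Daele's Proposition 3.4), which your proposal still leaves undone.
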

\begin{proof}
By the above argument, we have $\Psi_{A \hotimes \Cl _{0,1}} \circ \beta= \partial \circ \beta _A \circ \Phi _A =\partial \circ \Xi _A $. For a $c$-graded $\G$-invariant odd Fredholm operator $F \in M_s(A) \hotimes \Kop (\mathscr{V})$, the composition $\partial \circ \Xi _A$ is calculated in Proposition 3.4 of \cite{MR961241} as
\[
\partial \circ \Xi_A[F]=[-\Ad (\frac{1}{\sqrt{2}}(1+ve))(\sin \pi F + e \cos \pi F)].
\]
On the other hand, $\Theta _A [-\exp (\pi i F)]= [-\Ad (\frac{1}{\sqrt{2}}(1+ve))(\cos \pi F + i\sin \pi F)]$ by definition and Remark \ref{rmk:indep}.
\end{proof}

\begin{prp}\label{prp:bdexp}
There is an exact sequence 
\[
\lu{\phi}\K _{1,c,\tau}^\G(A)_\vD \to \lu{\phi}\K _{1,c,\tau}^\G(A/I)_\vD \xra{\partial} \lu{\phi}\K _{0,c,\tau}^\G(I) \to \lu{\phi}\K_{0,c,\tau}^\G (A)
\]
were the boundary map given by $\partial [s]=-\exp (\pi i \tilde{s})$ where $\tilde{s}$ is a self-adjoint element in $A$ such that $\pi (\tilde{s})=s$.
\end{prp}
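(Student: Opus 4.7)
The plan is to derive both the exactness and the explicit boundary formula by combining the natural isomorphism $\Psi_A : \lu\phi \K_{0,c,\tau}^\G(SA) \to \lu\phi \K_{1,c,\tau}^\G(A)_\vD$ of Theorem \ref{thm:KvD} with the standard Kasparov long exact sequence, and then computing in the unitary model of $\K_0$ supplied by Proposition \ref{prp:twK}.

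For exactness, I will apply the usual six-term exact sequence in $\lu\phi \K_{0,c,\tau}^\G$ to the ideal inclusion $0 \to I \to A \to A/I \to 0$, extracting the segment
\[
\lu\phi \K_{0,c,\tau}^\G(SA) \to \lu\phi \K_{0,c,\tau}^\G(S(A/I)) \xra{\partial_\KK} \lu\phi \K_{0,c,\tau}^\G(I) \to \lu\phi \K_{0,c,\tau}^\G(A).
\]
By the naturality of $\Psi$ established in Theorem \ref{thm:KvD}, the isomorphisms $\Psi_A$ and $\Psi_{A/I}$ translate the first two groups into their van Daele counterparts, giving the required exact sequence with $\partial := \partial_\KK \circ \Psi_{A/I}^{-1}$.

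Next, I will verify that the formula $[s] \mapsto [-\exp(\pi i \tilde s)]$ defines a valid map $\lu\phi \K_{1,c,\tau}^\G(A/I)_\vD \to \lu\phi \K_{0,c,\tau}^\G(I)$ in the unitary model of Proposition \ref{prp:twK}. Choosing $\tilde s$ to be self-adjoint, odd, and (after Haar averaging, which is available because $\G$ is proper) graded $\G$-invariant, the identity $s^2 = 1$ combined with continuous functional calculus forces $\cos(\pi \tilde s) + 1$ and $\sin(\pi \tilde s)$ to lie in $I \hotimes \Kop(\mathscr{V})$, so $u := -\exp(\pi i \tilde s)$ satisfies $u - 1 \in I \hotimes \Kop(\mathscr{V})$. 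The relations $\gamma(u) = u^*$ and $\alpha_g(u) = \gamma^{c(g)+\phi(g)}(u)$ required by the remark after Proposition \ref{prp:twK} follow from $\gamma(\tilde s) = -\tilde s$, the graded $\G$-invariance of $\tilde s$, and the $\phi(g)$-linearity of $\alpha_g$. A straight-line homotopy between two lifts shows that $[u]$ depends only on $s$.

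It remains to identify this exponential construction with $\partial_\KK \circ \Psi_{A/I}^{-1}$. Here I plan to invoke Lemma \ref{lem:compati}, which expresses $\Psi_{(\cdot) \hotimes \Cl_{0,1}} \circ \beta$ as the concrete isomorphism $\Theta_{(\cdot)}$. Tensoring the extension with $\Cl_{0,1}$ and tracing through how the standard KK boundary interacts with $\beta$ reduces the claim to the explicit boundary formula in the unitary model, which is the classical $\K$-theoretic exponential map (analogous to the computation in Proposition 3.4 of \cite{MR961241} already used in the proof of Lemma \ref{lem:compati}). The sign $-1$ in $u = -\exp(\pi i \tilde s)$ is then fixed by the normalization $-\exp(\pi i x (1+x^2)^{-1})$ chosen in the remark after Proposition \ref{prp:twK}. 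The main obstacle is precisely this bookkeeping step: several Bott-type isomorphisms ($\Psi$, $\Theta$, $\beta$) and sign conventions must be composed consistently, and Lemma \ref{lem:compati} is the essential tool that makes the composition tractable. Exactness itself is almost formal once Theorem \ref{thm:KvD} is in hand.
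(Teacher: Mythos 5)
Your overall strategy is sound and arrives at the same conclusion by essentially parallel tools, but it differs from the paper's proof in which exact sequence it takes as its starting point, and this difference matters for self-containedness. You propose to start from a six-term exact sequence for $\lu\phi\K_{0,c,\tau}^\G$ (i.e.\ a Kasparov-type long exact sequence for the $\KK$-theoretic definition) and then transport it via the natural isomorphism $\Psi$ of Theorem~\ref{thm:KvD}. Such a sequence is indeed available once you pass through Proposition~\ref{prp:detwist} to Real $\KK$-theory, but the paper never states it in the twisted setting; instead, the paper's proof proceeds entirely inside van Daele $\K$-theory: it tensors the extension with $\Cl_{0,1}$, invokes the long exact sequence for $\lu\phi\K_{1,c,\tau}^\G(\blank)_\vD$ already established via Lemma~\ref{lem:vDisK} and Proposition~\ref{prp:Bott}, and uses the vertical isomorphisms $\beta_A$, $\beta_{A/I}$, $\Theta_I^{-1}$ to carry the exactness over. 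So the paper's exactness argument is self-contained relative to Section~\ref{section:5}, whereas yours leans on machinery the paper left implicit; both are valid.

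For the identification of the boundary map you are on the right track — Lemma~\ref{lem:compati} is indeed the decisive input — but what you describe as the ``bookkeeping step'' is exactly where the paper does the work and where your sketch falls short of a proof. Concretely, the paper first shows $\Theta_I^{-1}\circ\partial\circ\beta_{Q_s(I)}=\beta^{-1}\circ S\Phi_{I\hotimes\Cl_{0,1}}\circ\beta_{Q_s(I)}=\Phi_A$, then uses the commuting square for $\mu_*:\lu\phi\K_{1,c,\tau}^\G(A/I)_\vD\to\lu\phi\K_{1,c,\tau}^\G(Q_s(I))_\vD$ to conclude $\Theta_I^{-1}\circ\partial\circ\beta_{A/I}=\Phi_I\circ\mu_*$, and only then evaluates $\Phi_I\circ\mu_*$ on a symmetry $s$ to obtain $[-\exp(\pi i\mu(\tilde s))]$ via the functional calculus underlying Proposition~\ref{prp:twK}. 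Your verification that $u=-\exp(\pi i\tilde s)$ lands in the unitary model (the checks $u-1\in I\hotimes\Kop(\mathscr{V})$, $\gamma(u)=u^*$, $\alpha_g(u)=\gamma^{c(g)+\phi(g)}(u)$) is correct, and does establish that the exponential formula defines \emph{a} well-defined map; the remaining gap is that you don't show this map \emph{equals} the boundary map of whichever exact sequence you invoked. To close it you would need to reproduce a diagram chase of the same kind as the paper's, relating the $\KK$-theoretic connecting map, $\Psi$, $\beta$, and $\Theta$ — which is feasible but is the actual content of the proof, not a formality.
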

\begin{proof}
Consider the following diagram
\[
\xymatrix@C=1em{\scriptstyle \lu{\phi}\K _{1,c,\tau}^\G(A)_\vD \ar[r] \ar[d]^{\beta _A} &\scriptstyle \lu{\phi}\K _{1,c,\tau}^\G(A/I)_\vD \ar@{.>}[r] \ar[d]^{\beta _{A/I}} &\scriptstyle \lu{\phi}\K _{0,c,\tau}^\G(I) \ar[r] &\scriptstyle \lu{\phi}\K_{1,c,\tau}^\G (A)
\\
\scriptstyle \lu{\phi}\K _{1,c,\tau}^\G(SA \hotimes \Cl _{0,1})_\vD \ar[r]&\scriptstyle \lu{\phi}\K _{1,c,\tau}^\G(SA/I \hotimes \Cl _{0,1})_\vD \ar[r]^\partial &\scriptstyle \lu{\phi}\K _{1,c,\tau}^\G(I \hotimes \Cl _{0,1})_\vD \ar[r] \ar[u]_{\Theta _I^{-1}} &\scriptstyle \lu{\phi}\K_{1,c,\tau}^\G (A \hotimes \Cl_{0,1})_\vD \ar[u]_{\Theta _I^{-1}}
}
\]
where the second row is the long exact sequence associated with the exact sequence $0 \to I\hotimes \Cl_{0,1} \to A \hotimes \Cl_{0,1} \to A/I \hotimes \Cl_{0,1} \to 0$. 

Since $\Theta _I^{-1} \circ \partial \circ \beta _{Q_s(I)} =\beta ^{-1} \circ S\Phi _{I \hotimes \Cl _{0,1}} \circ \beta _{Q_s(I)}=\Phi _A$, we obtain $\Theta _I^{-1} \circ \partial \circ \beta _{A/I} =\Phi _I \circ \mu_*$ since the diagram
\[
\xymatrix{
\lu \phi \K _{1,c,\tau}^\G (A/I)_\vD  \ar[d]^{\mu_*} \ar[r] & \lu \phi \K _{1,c,\tau}^\G (I) \ar@{=}[d] \\
\lu \phi \K _{1,c,\tau}^\G (Q_s(I))_\vD \ar[r] & \lu \phi \K _{1,c,\tau}^\G (I)
}
\]
commutes.
For an odd self-adjoint $c$-graded $\G$-invariant symmetry $s \in \hat{\Mop} _{2,2}(A/I) \hotimes \Kop(\mathscr{V})$, we have $\Phi_{I} \circ \mu_*([s])=[-\exp(\pi i \mu(\tilde{s}))]$ under the identification in Proposition \ref{prp:twK} where $\tilde{s}$ is an odd self-adjoint $c$-graded $\G$-invariant lift of $s$ in $\hat{\Mop} _{2,2}A \hotimes \Kop(\mathscr{V})$.
\end{proof}

Let $A$ be a trivially graded unital $\phi$-twisted $\G$-$\Cst$-algebra. 
We say that an element $x$ in $A$ is \emph{$c$-twisted $\G$-invariant} if $\alpha _g(x)=(-1)^{c(g)}(x)$ for any $g \in \G$. 
For a finite dimensional $(\phi,c,\tau)$-twisted unitary representation $\mathscr{V}$, let $\lu \phi \F _{c,\mathscr{V}}^\G (A)$ denote the space of $c$-twisted $\G$-invariant symmetries in $A  \otimes \Kop({\mathscr{V}}^\circ)$. 
Here we write $\mathscr{V}^\circ$ for the unitary representation of $\G$ obtained by forgetting the $\Zt$-grading of $\mathscr{V}$. 
In the same way as the definition of $\lu \phi \bK_{c,\tau}^\G(A)_e$, the inductive limit $\lu \phi \mathscr{K}_{0,c,\tau}^\G (A):= \varinjlim  _\mathscr{V} \pi _0 \lu \phi \F _{c,\mathscr{V}}^\G (A)$ with respect to the inclusion $\lu \phi \F _{c,\mathscr{V}}^\G (A) \subset \lu \phi \F _{c, \mathscr{V}\oplus \mathscr{W}}^\G (A)$ given by $s \mapsto s \oplus \gamma_\mathscr{W}$ (here $\gamma _\mathscr{W}$ is the grading operator of $\mathscr{W}$) has a structure of abelian group. 
Similarly, let $\lu \phi \mathscr{U} _{c, \mathscr{V} }^\G (A)$ denote the space of unitaries in $A \otimes \Kop (\mathscr{V}^\circ)$ such that $\alpha _g(u)=u$ if $c(g)+\phi (g)=0$ and $\alpha _g(u)=u^*$ if $c(g)+\phi(g)=1$. Set $\lu \phi \mathscr{K}_{-1,c,\tau}^\G (A):=\varinjlim _\mathscr{V} \lu \phi \mathscr{U}_{c,\mathscr{V} }^\G (A)$ with respect to the inclusion $\lu \phi \mathscr{U}_{c,\mathscr{V}}^\G (A) \subset \lu \phi \mathscr{U} _{c,\mathscr{V}\oplus \mathscr{W}}^\G (A)$ given by $u \mapsto u \oplus 1_{\mathscr{W}}$. 

For a nonunital $\phi$-twisted $\G$-$\Cst$-algebra $A$, the $\lu \phi \mathscr{K}_{*,c,\tau}^\G (A)$ is defined by the kernel of $\lu \phi \mathscr{K}_{*,c,\tau}^\G (A^+) \to \lu \phi \mathscr{K}_{*,c,\tau}^\G (C_b(\G ^0))$.
\begin{thm}\label{cor:triv}
Let $A$ be a trivially graded $\phi$-twisted $\G$-$\Cst$-algebra. 
\begin{enumerate}
\item The group $\lu \phi \mathscr{K}_{0,c,\tau}^\G (A)$ is isomorphic to $\lu{\phi}\K_{0,c,\tau}^\G(A)_\vD$.
\item The group $\lu \phi \mathscr{K}_{-1,c,\tau}^\G (A)$ is isomorphic to $\lu{\phi}\K_{-1,c,\tau}^\G(A)_\vD$. 
\item Let $0 \to I \to A \to A/I \to 0$ be an exact sequence of $\phi$-twisted trivially graded $\Cst$-algebras. Then there is an exact sequence 
\[
\lu{\phi}\mathscr{K}_{0,c,\tau}^\G(A) \to \lu{\phi}\mathscr{K}_{0,c,\tau}^\G(A/I) \xra{\partial} \lu{\phi}\mathscr{K} _{-1,c,\tau}^\G(I) \to \lu{\phi}\mathscr{K}_{-1,c,\tau}^\G (A)
\]
where the boundary map $\partial$ is given by $\partial [s] = [-\exp (\pi i \tilde{s})]$ where $\tilde{s}$ is a self-adjoint lift of $s$.
\end{enumerate}
\end{thm}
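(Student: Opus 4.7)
The plan is to establish parts (1) and (2) by constructing explicit natural isomorphisms, and then deduce part (3) from Proposition \ref{prp:bdexp} via naturality.

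For part (1), I would begin with the natural isomorphism $\Theta_A : \lu\phi \K_{0,c,\tau}^\G(A) \to \lu\phi \bK_{c,\tau}^\G(A \hotimes \Cl _{0,1})_e$ of the discussion following Lemma \ref{lem:compati}, combined with Theorem \ref{thm:KvD} identifying $\lu\phi \K_{0,c,\tau}^\G(A)$ with $\lu\phi \K_{0,c,\tau}^\G(A)_\vD$. An element of the target $\bK$-group is represented, for some finite-dimensional $(\phi,c,\tau)$-twisted unitary representation $\mathscr{V}$, by an odd $c$-graded $\G$-invariant symmetry $s = s_0 + s_1 e \in A \hotimes \Kop(\mathscr{V}) \hotimes \Cl _{0,1}$ with $s_0 s_1 = s_1 s_0$ and $s_0^2 + s_1^2 = 1$; since $A$ is trivially graded, $s_0$ is off-diagonal and $s_1$ diagonal with respect to the $\Zt$-grading of $\mathscr{V}$. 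The plan is to show that any such class admits a representative with $s_0 = 0$ through a $\G$-equivariant homotopy, after possibly stabilizing by the trivial pair $(0, \gamma_\mathscr{W})$ for some auxiliary representation $\mathscr{W}$. The resulting $s_1$, viewed in the ungraded algebra $A \otimes \Kop(\mathscr{V}^\circ)$, is a symmetry satisfying $\alpha_g(s_1) = (-1)^{c(g)} s_1$, which is exactly the condition defining $\lu\phi\mathscr{K}_{0,c,\tau}^\G(A)$. The inverse map sends a $c$-twisted $\G$-invariant symmetry $t$ to the pair $(0, t)$, and standard arguments show this is well-defined on homotopy classes and compatible with direct sums.

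For part (2), the argument parallels (1). Using Proposition \ref{prp:twK} together with a Cayley-type transform (or alternatively the analog of $\Theta$ combined with the Bott map of Proposition \ref{prp:Bott}), I would identify $\lu\phi\K_{-1,c,\tau}^\G(A)_\vD$ with homotopy classes of $\G$-equivariant unitaries in $A \otimes \Kop(\mathscr{V}^\circ)$ subject to $\alpha_g(u) = u$ if $c(g)+\phi(g)=0$ and $\alpha_g(u) = u^*$ if $c(g)+\phi(g)=1$, which is precisely the definition of $\lu\phi\mathscr{K}_{-1,c,\tau}^\G(A)$. Naturality in $A$ and compatibility with the semigroup structure follow directly.

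Part (3) is then formal: Proposition \ref{prp:bdexp} provides an exponential exact sequence for the $\K_\vD$-groups, and the isomorphisms of (1) and (2) transport it to the claimed sequence. The boundary formula $\partial[s]=-\exp(\pi i \tilde s)$ is inherited directly from Proposition \ref{prp:bdexp}; a $c$-twisted $\G$-invariant self-adjoint lift $\tilde s \in A$ of $s \in A/I$ exists by averaging over $\G$ using properness, and the $\phi(g)$-linearity of $\alpha_g$ combined with $\alpha_g(\tilde s) = (-1)^{c(g)}\tilde s$ ensures $-\exp(\pi i \tilde s)$ satisfies the twisted equivariance defining $\lu\phi\mathscr{K}_{-1,c,\tau}^\G(I)$.

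The main obstacle is the deformation step in part (1): reducing an arbitrary pair $(s_0, s_1)$ to one with $s_0 = 0$ in a $\G$-equivariant manner while preserving self-adjointness, the commuting condition, and the sphere relation $s_0^2 + s_1^2 = 1$. The commutativity $s_0 s_1 = s_1 s_0$ is essential, as it allows joint continuous functional calculus of $s_0$ and $s_1$, and the stabilization by $(0, \gamma_\mathscr{W})$ provides the extra room needed to construct an explicit path that first moves $s_0$ into the auxiliary summand and then rotates it into the $s_1$-direction.
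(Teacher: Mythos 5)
The paper's proof of (1) and (2) is \emph{not} a deformation argument: it constructs a direct bijection of spaces
\[
\lu \phi F_{c,\mathscr{V}}^\G (A \hotimes \Cl _{0,1}) \;\longrightarrow\; \lu \phi \mathscr{F} _{c,\mathscr{V}}^\G (A),\qquad s_0+s_1 e \;\longmapsto\; s_0 + s_1 \gamma_{\mathscr{V}},
\]
(in your indexing, with $s_0$ odd and $s_1$ even). Both components survive; the Clifford generator $e$ is simply replaced by the grading operator $\gamma_{\mathscr{V}}$, which is legitimate because $e$ and $\gamma_{\mathscr{V}}$ are interchangeable odd self-adjoint square roots of $1$ commuting with everything relevant. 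The constraints $s_0s_1=s_1s_0$, $s_0^2+s_1^2=1$, and the parities and equivariances carry over on the nose, so the map is a homeomorphism of the two symmetry spaces, and $\pi_0$ is identified with no homotopy-theoretic input. You instead propose to deform every class to one with $s_0=0$, which is a genuinely different route and, as written, has a gap: such a deformation is not established, is not a consequence of the commuting-pair structure alone, and in the $\G$-equivariant setting would amount to showing that the subspace of symmetries anticommuting with $\gamma_{\mathscr{V}}$ is a $\pi_0$-isomorphism onto all of $\mathscr{F}$ after stabilization --- a nontrivial Karoubi-type retraction that you would have to prove separately. The paper's bijection makes that step unnecessary.

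There is also a concrete bookkeeping error in the equivariance that shows the proposed inverse $t \mapsto (0,t)$ cannot be correct when $c\ne 0$. Writing out what it means for $s=s_0+s_1e$ to be a graded $\G$-invariant odd symmetry in $A\hotimes\Kop(\mathscr{V})\hotimes\Cl_{0,1}$ (as the paper records), one gets $\alpha_g(s_0)=(-1)^{c(g)}s_0$ for the odd part and $\alpha_g(s_1)=s_1$ for the even part. You claim the even coefficient $s_1$ satisfies $\alpha_g(s_1)=(-1)^{c(g)}s_1$; it does not, and therefore $s_1$ alone is not a $c$-twisted $\G$-invariant symmetry. The correct passage to $\mathscr{F}$ multiplies $s_1$ by $\gamma_{\mathscr{V}}$ (which picks up the missing $(-1)^{c(g)}$, since $u_g\gamma_{\mathscr{V}}u_g^*=(-1)^{c(g)}\gamma_{\mathscr{V}}$), and then adds back $s_0$; no term is discarded. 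The same remark applies to your stabilizing ``trivial pair'' $(0,\gamma_{\mathscr{W}})$: $\gamma_{\mathscr{W}}$ is $c$-twisted, not $\G$-fixed, so it is not an admissible even coefficient. Part (2) has the analogous issue, and part (3) being formal from Proposition \ref{prp:bdexp} is correct and matches the paper once (1) and (2) are in place.
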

\begin{proof}
We may assume $A$ is unital. The isomorphism $\lu \phi \bK_{c,\tau}^\G(A \hotimes \Cl _{0,1})_e \cong \lu \phi \mathscr{K}_{0,c,\tau}^\G (A)$ is induced from the bijection $\lu \phi F_{c,\mathscr{V}}^\G (A \hotimes \Cl _{0,1}) \to \lu \phi \mathscr{F} _{c,\mathscr{V}}^\G (A)$ given by
\[
s=s_0e +s_1 \mapsto s'=s_0 \gamma _{\mathscr{V}} +s_1.
\]
Here, the decomposition $s'=s_0\gamma +s_1$ is given by the $\pm 1$ component of the involution $\Ad \gamma _{\mathscr{V}}$. This map determines a well-defined bijection because $s_0e+s_1$ is in $\lu \phi F_{c,\mathscr{V}}^\G (A \hotimes \Cl _{0,1})$ if and only if $s_0$ is even, $s_1$ is odd, $s_i^*=s_i$, $s_0s_1=s_1s_0$, $s_0^2+s_1^2=1$, $\alpha _g (s_0)=s_0$ and $\alpha _g(s_1)=(-1)^{c(g)}s_1$. 

The isomorphism (2) is also given in the same way as above. For a $\Zt$-graded $\phi$-twisted unital $\G$-$\Cst$-algebra $A$, let $\lu \phi U_{c,\mathscr{V}}^\G (A)$ be the space of unitaries in $A \hotimes \Kop (\mathscr{V})$ such that $\gamma (u)=u^*$ and $\alpha _g(u)=\gamma ^{\phi (g)+c(g)}(u)$. Then, by Proposition \ref{prp:twK} and Lemma \ref{lem:compati}, $\lu \phi \K _{0,c,\tau}^\G (A) \cong \varinjlim _{\mathscr{V}} \pi _0 \lu \phi U_{c,\mathscr{V}}^\G (A)$. Now, the isomorphism $\lu \phi \K_{0,c,\tau}^\G(A \hotimes \Cl _{0,1}) \cong \lu \phi \mathscr{K}_{-1,c,\tau}^\G (A)$ is induced from the bijection from $\lu \phi U_{c,\mathscr{V}}^\G (A \hotimes \Cl _{0,1})$ to $\lu \phi \mathscr{U} _{c,\mathscr{V}}^\G (A)$ given by
\[
u=u_0e +u_1 \mapsto u'=u_0\gamma +u_1 .
\]
Here, the decomposition $u'=u_0\gamma +u_1$ is given by the $\pm 1$ component of the involution $x \mapsto \gamma_{\mathscr{V}} x^*\gamma _{\mathscr{V}}$. 

Finally, (3) follows from Proposition \ref{prp:bdexp}. Note that the identification $\lu \phi \bK_{1,c,\tau}^\G(A \hotimes \Cl _{0,1})_e \cong \lu \phi \K_{1,c,\tau}^\G(A\hotimes \Cl _{0,1})$ is given by $\Ad (1+ve)/\sqrt{2}$, which also appears in the isomorphism $\lu \phi \K _{0,c,\tau}^\G (A) \cong \varinjlim _{\mathscr{V}} \pi _0 \lu \phi U_{c,\mathscr{V}}^\G (A)$.
\end{proof}

These groups are related to Karoubi's $\K$-theory. In \citelist{\cite{MR0238927}\cite{MR2458205}}, Karoubi gives a definition of higher real $\K$-theory for pseudo-abelian Banach categories in terms of modules over Clifford algebras (Definition 4.11 of \cite{MR2458205}), which is applied by Thiang~\cite{mathph14067366} for the study of topological insulators. Now we generalize it for twisted equivariant $\K$-theory for trivially graded $\Cst$-algebras. A $(\phi ,\tau )$-twisted $(n,m)$-Karoubi triple is a triplet $(W,\Gamma _1 ,\Gamma _2)$ where $W$ is a finitely generated projective (i.e.\ $W \cong pA^n$ for some $p \in \Mop_n(A)$) $(\phi,\tau)$-twisted $\G$-equivariant Hilbert $A$-module with a unital $\phi$-twisted $\Zt$-graded $\G$-equivariant $\ast$-homomorphism $\varphi : \Cl _{n,m} \to \Lop (W)$ and $\Gamma _i$ are symmetries on $W$ which determine a $\Zt$-grading on $W$ compatible with $\varphi$ and the $\G$-action. A Karoubi triple $(W,\Gamma _1,\Gamma _2)$ is trivial if $\Gamma _1=\Gamma _2$. Let $\lu \phi \K _{(n,m),\tau}^\G(A)_{\Kar}$ denote the quotient of the semigroup of homotopy classes of Karoubi triples (product is given by the direct sum) by the subsemigroup of trivial triples. 

\begin{cor}\label{cor:real}
Let $A$ be a Real $\G$-$\Cst$-algebra. Then, the group $\lu \phi \K_{(n,m),\tau}^\G (A)_{\Kar }$ is isomorphic to $\lu \phi \K_{m-n, \tau}^\G(A)$. In particular, for a real twist $(\phi _\real,0, \tau _\real)$ of $\G_\real$, $\lu{\phi _\real} \K_{(n,m),\tau _\real }^{ \G_\real } (A)_{\Kar }$ is isomorphic to $\KR _{m-n,\tau}^\G(A)$.
\end{cor}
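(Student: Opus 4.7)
The plan is to match Karoubi triples with classes in a shifted twisted equivariant $\K$-group, using Corollary \ref{cor:Cl} to absorb the degree shift and Theorem \ref{cor:triv} to pass to a symmetry-based description. Setting $\G' := \G \times F_{n,m}$ and $(c',\tau') := (c_{n,m}^0, \tau \times \tau_{n,m}^0)$ as in Example \ref{exmp:Cl2}, Corollary \ref{cor:Cl} already gives
\[
\lu \phi \K_{0,c',\tau'}^{\G'}(A) \cong \lu \phi \K_{m-n,\tau}^\G(A),
\]
so the task reduces to identifying $\lu \phi \K^\G_{(n,m),\tau}(A)_\Kar$ with the left-hand side.

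First I would repackage a $(\phi,\tau)$-twisted $(n,m)$-Karoubi triple $(W,\Gamma_1,\Gamma_2)$ as follows. The Clifford action $\varphi : \Cl_{n,m} \to \Lop(W)$ combined with the $(\phi,\tau)$-twisted $\G$-action makes $W$ into a finitely generated projective $(\phi, c', \tau')$-twisted $\G'$-equivariant Hilbert $A$-module, the generators $e_i, f_j$ of $C_{n,m}$ acting through $\varphi(e_i), \varphi(f_j)$. The condition that each $\Gamma_i$ is a symmetry determining a $\Zt$-grading compatible with $\varphi$ and the $\G$-action translates exactly to $\Gamma_i$ being a $c'$-twisted $\G'$-invariant symmetry in the sense of Theorem \ref{cor:triv}: commutation with $\G$ and anticommutation with the odd Clifford generators is precisely $c'$-twisted invariance under $\G' = \G \times F_{n,m}$. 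After stabilizing via (\ref{form:Kas}) so that $W \oplus W'$ is identified with $A \otimes \mathscr{V}^\circ$ for a suitable finite dimensional $(\phi,c',\tau')$-twisted representation $\mathscr{V}$ of $\G'$, the operators $\Gamma_i \oplus \gamma_{W'}$ become elements of $\lu \phi \mathscr{F}_{c',\mathscr{V}}^{\G'}(A)$.

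Next I would define $\Phi[W,\Gamma_1,\Gamma_2] := [\Gamma_1] - [\Gamma_2]$ in $\lu \phi \mathscr{K}_{0,c',\tau'}^{\G'}(A)$; this is additive under direct sums, annihilates trivial triples, and respects homotopies by construction. The inverse sends a symmetry $s \in \lu \phi \mathscr{F}_{c',\mathscr{V}}^{\G'}(A)$ to the Karoubi triple $(A \otimes \mathscr{V}, s, \gamma_\mathscr{V})$ with Clifford action induced from the $F_{n,m}$-action on $\mathscr{V}$, and one checks that the two constructions are mutually inverse. Combining $\Phi$ with Theorem \ref{cor:triv} and Corollary \ref{cor:Cl} then yields
\[
\lu \phi \K^\G_{(n,m),\tau}(A)_\Kar \cong \lu \phi \mathscr{K}^{\G'}_{0,c',\tau'}(A) \cong \lu \phi \K^{\G'}_{0,c',\tau'}(A) \cong \lu \phi \K^\G_{m-n,\tau}(A),
\]
and the second assertion is the specialization to the real twist $(\phi_\real,0,\tau_\real)$ on $\G_\real$, since $\KR^\G_{m-n,\tau}(A)$ is $\lu{\phi_\real}\K^{\G_\real}_{m-n,\tau_\real}(A)$ by definition.

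The main obstacle is matching the equivalence relations: the Karoubi $\K$-group is presented as the monoid quotient of triples by the trivial ones with stabilization through direct sums, while $\lu \phi \mathscr{K}^{\G'}_0$ is an inductive limit of path components of symmetry spaces. Verifying that these coincide requires showing that any two stable isomorphisms $W \oplus W' \cong A \otimes \mathscr{V}^\circ$ yield homotopic symmetries and that every symmetry in the inductive system arises from some Karoubi triple. A secondary delicate point is matching the $2$-cocycle twists: one must verify that $\tau_{n,m}^0$ correctly encodes the sign conventions making $\varphi$ a genuine $(\phi,c',\tau')$-twisted unitary representation of $C_{n,m}$, i.e.\ that the Clifford relations are equivalent to the $\tau_{n,m}^0$-twisted commutation relations on the Clifford group.
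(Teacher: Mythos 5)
Your proposal is correct and takes essentially the same route as the paper: both identify Karoubi triples with $c'$-twisted $\G'=\G\times F_{n,m}$-invariant symmetries on finitely generated modules inside $A\hotimes\mathscr{V}$ and then conclude via Theorem \ref{cor:triv} and Corollary \ref{cor:Cl}; the paper simply constructs the map in the opposite direction, $F_A([s])=[\mathscr{V}\hotimes A,s,\gamma]$, of which your $\Phi[W,\Gamma_1,\Gamma_2]=[\Gamma_1]-[\Gamma_2]$ is the inverse. The one step to make explicit is that the finite-dimensional $\mathscr{V}$ does not come from (\ref{form:Kas}) alone: one uses that the projection onto the finitely generated projective module $W$ is compact, hence approximated by the $\G$-invariant AFGP projections $p_n$, so that $W$ embeds into $p_n\bigl(\lu{\phi}\Hilb_{\G',A}^{c',\tau'}\bigr)=A\hotimes\mathscr{V}$ with $\mathscr{V}$ finite dimensional.
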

\begin{proof}
Let $\G'=\G _{n,m}$ and $(\phi,c',\tau')=(\phi, c_{n,m}, \tau_{n,m})$ be as in Example \ref{exmp:Cl2}. Now, we get a homomorphism
\[F_A: \lu {\phi} \mathscr{K}_{0,c',\tau'}^{\G'}(A) \to \lu \phi \K_{(n,m),\tau}^{\G} (A)_{\Kar }\]
mapping $s \in \lu \phi \mathscr{F}_{c',\mathscr{V}}^{\G'}(A)$ to $[\mathscr{V} \hotimes A, s, \gamma ]$. 

For a $(n,m)$-Karoubi triple $[W,\Gamma _1,\Gamma _2]$, we write $W_2$ for the $(\phi,c',\tau')$-twisted $\G'$-equivariant Hilbert $A$-module $W$ with the $\Zt$-grading $\Gamma _2$. It is embedded in $\lu \phi \Hilb _{\G',A}^{c',\tau'}$ by (\ref{form:Kas}). Since $W$ is finitely generated projective, the projection $P$ onto $W_2$ is compact by Lemma 6.3 of \cite{MR582160} and hence $p_n P p_n \to P$. Consequently, $W_2$ is embedded into $p_n(\lu \phi \Hilb _{\G',A}^{c',\tau'})=A \hotimes \mathscr{V}$, where $\mathscr{V}:=p_n(\lu \phi \Hilb _{\G'}^{c',\tau'})$.

It immediately implies that $F_A$ is surjective. Moreover, we can check injectivity by applying this argument for degenerate Karoubi triples and Hilbert $A \otimes C[0,1]$-modules given by homotopies of Karoubi triples. 

Now, the conclusion follows from Corollary \ref{cor:Cl} and Theorem \ref{cor:triv}.
\end{proof}

The following corollary immediately follows from Theorem Corollary \ref{cor:CT} and Theorem \ref{cor:triv}.

\begin{cor}
Let $(\G,\phi,c,\tau)$ be a CT-type symmetry as in Example \ref{exmp:CT} whose CT-type corresponds to $\KF_n$ as indicated in Table \ref{table:CT}. Let $A$ be a (trivially graded) Real $\G_0$-$\Cst$-algebra. Then;
\begin{enumerate}
\item The group $\lu \phi \mathscr{K} _{0,c,\tau}^\G (A)$ is isomorphic to $\KF_n ^{\G_0}(A)$.
\item The group $\lu \phi \mathscr{K}_{-1,c,\tau}^\G (A)$ is isomorphic to $\KF_{n-1}^{\G _0}(A)$. 
\item Let $0 \to I \to A \to A/I \to 0$ be an exact sequence of $\phi$-twisted trivially graded $\Cst$-algebras. Then there is an exact sequence 
\[
\lu{\phi}\mathscr{K}_{0,c,\tau}^\G(A) \to \lu{\phi}\mathscr{K}_{0,c,\tau}^\G(A/I) \xra{\partial} \lu{\phi}\mathscr{K} _{-1, c,\tau}^\G(I) \to \lu{\phi}\mathscr{K}_{-1,c,\tau}^\G (A)
\]
where the boundary map $\partial$ is given by $\partial [s] = [-\exp (\pi i \tilde{s})]$ where $\tilde{s}$ is a self-adjoint lift of $s$.
\end{enumerate}
\end{cor}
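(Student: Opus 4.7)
The plan is to chain together the three main identifications established earlier in the paper: the passage from the ``stable symmetry'' groups $\lu \phi \mathscr{K}^\G_{*,c,\tau}$ to the van~Daele groups $\lu \phi \K^\G_{*,c,\tau}(-)_{\vD}$ (Theorem \ref{cor:triv}), the comparison between the van~Daele groups and the Kasparov-theoretic $\K$-groups of Definition \ref{def:twK} (Theorem \ref{thm:KvD}, together with the suspension isomorphism Proposition \ref{prp:Bott} used to shift degrees), and finally the Green--Julg-type identification of the twisted equivariant $\K$-group of a CT-type symmetry with an ordinary Real/complex equivariant $\K$-group (Corollary \ref{cor:CT}). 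Since $A$ is assumed trivially graded throughout, all hypotheses of Theorem \ref{cor:triv} are met.

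First I would treat part (1). By Theorem \ref{cor:triv}(1) we have $\lu \phi \mathscr{K}^\G_{0,c,\tau}(A) \cong \lu \phi \K^\G_{0,c,\tau}(A)_{\vD}$. Using the definition $\lu \phi \K^\G_{1+p-q,c,\tau}(A)_{\vD} := \lu \phi \K^\G_{1,c,\tau}(S^{p,q}A)_{\vD}$ together with Theorem \ref{thm:KvD} and the Bott-type isomorphism of Proposition \ref{prp:Bott}, the van~Daele group is naturally identified with $\lu \phi \K^\G_{0,c,\tau}(A)$. Then Corollary \ref{cor:CT} yields the isomorphism $\lu \phi \K^\G_{0,c,\tau}(A) \cong \KF^{\G_0}_n(A)$ by reading off the Clifford algebra entry in Table~\ref{table:CT}. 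Composing these three natural isomorphisms gives (1). Part (2) is analogous: Theorem \ref{cor:triv}(2) identifies $\lu \phi \mathscr{K}^\G_{-1,c,\tau}(A)$ with $\lu \phi \K^\G_{-1,c,\tau}(A)_{\vD}$, which equals $\lu \phi \K^\G_{-1,c,\tau}(A)$ by the same combination of Theorem \ref{thm:KvD} and Proposition \ref{prp:Bott}, and then Corollary \ref{cor:CT} (with $*=-1$) provides the isomorphism with $\KF^{\G_0}_{n-1}(A)$.

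For part (3), one obtains the exact sequence and the exponential description of the boundary map by simply transporting Theorem \ref{cor:triv}(3) along the isomorphisms of parts (1) and (2). Naturality of all the identifications involved — in particular the naturality of the isomorphism $\Psi_A$ produced in the proof of Theorem \ref{thm:KvD} and of the Green--Julg isomorphism underlying Corollary \ref{cor:CT} — ensures that the exact sequence of Theorem \ref{cor:triv}(3) for the ideal $I \subset A$ maps isomorphically onto the desired sequence, with the boundary map $\partial[s] = [-\exp(\pi i \tilde{s})]$ preserved verbatim, since that formula lives entirely inside the $\mathscr{K}$-presentation and is not affected by the subsequent identifications.

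The only non-formal point worth checking is that the CT-type hypothesis is compatible with the trivial grading of $A$: this is immediate since in Example \ref{exmp:CT} the grading $c$ lives only on the CT-type factor $\mathscr{A}$ and not on the $\G_0$-action on $A$, which is the same observation used in Corollary \ref{cor:CT}. No further obstacle arises, so the corollary indeed follows by formal concatenation.
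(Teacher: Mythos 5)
Your proposal is correct and follows essentially the same route as the paper, whose own proof is just the observation that the statement follows immediately from Corollary \ref{cor:CT} and Theorem \ref{cor:triv}; your explicit invocation of Theorem \ref{thm:KvD} and Proposition \ref{prp:Bott} merely spells out the degree bookkeeping between the van~Daele and Kasparov pictures that the paper leaves implicit. Note also that part (3) is literally Theorem \ref{cor:triv}(3), so no transport of the exact sequence is actually needed there.
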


\begin{exmp}
Let $A$ be a Real $\Cst$-algebra and let
\[
0 \to A \otimes \Kop  \to A \otimes \mathcal{T}_\real \to A \otimes C(\mathbb{T}^{0,1}) \to 0
\]
be the Toeplitz extension associated to $A$. Then, the boundary homomorphism $\partial =-\exp (\pi i \blank)$ from $\lu \phi \mathscr{K} _{0,c,\tau}^\sA (A \otimes C(\mathbb{T}^{0,1})) \cong \KF _n(A) \oplus \KF_{n-1}(A)$ to $\lu \phi \mathscr{K} _{-1,c,\tau}^\sA (A) \cong \KF _{n-1}(A)$ is given by $0 \oplus \id_{\KF_{n-1}(A)}$.
 
In the theory of topological insulators and topological superconductors, gapped Hamiltonians of $d$-dimensional quantum systems with the CT symmetry $(\sA, \tau)$ are classified by the group $\lu \phi \mathscr{K} _{0,c,\tau}^{\sA}(C(\T ^{0,d}))$ and the corresponding edge Hamiltonians are classified by $\lu \phi \mathscr{K} _{-1,c,\tau}^{\sA}(C(\T ^{0,d-1}))$. The homomorphism $\partial$ gives a mathematical proof of the bulk-edge correspondence (cf.\ \cite{mathKT150907210}). The use of twisted equivariant $\K$-theory for the study of topological phases is discussed in \cite{Kubota2}.
\end{exmp}

\subsection*{Acknowledgment}
The author would like to thank his supervisor Yasuyuki Kawahigashi for his support and encouragement. He also would like to thank Yuki Arano, Mikio Furuta, Shin Hayashi, Motoko Kotani, Shinichiro Matsuo, Koji Sato and Guo Chuan Thiang for their helpful conversations. This work was supported by Research Fellow of the JSPS (No.\ 26-7081) and the Program for Leading Graduate Schools, MEXT, Japan.

\bibliographystyle{alpha}
\bibliography{bibABC,bibDEFG,bibHIJK,bibLMN,bibOPQR,bibSTUV,bibWXYZ,arxiv,bulkedge}
\end{document}